\numberwithin{equation}{section}
\pgfplotsset{compat=newest}
\tikzset{
	>=stealth',
	punkt/.style={
		rectangle,
		rounded corners,
		draw=black, very thick,
		text width=6.5em,
		minimum height=2em,
		text centered},
	pil/.style={
		->,
		thick,
		shorten <=2pt,
		shorten >=2pt,}
}
\tikzstyle{block} = [rectangle, rounded corners, minimum width= 3cm, minimum height=1cm, text centered, draw=black, fill=blue!20,]
\tikzstyle{invisbleblock} = [minimum width= 3em, minimum height=1cm, text centered, ]
\tikzstyle{decision} = [diamond, minimum width=3cm, minimum height=1cm, text centered, draw=black, fill=orange!20]
\tikzstyle{arrow} = [thick,->,>=stealth]
\tikzstyle{line} = [thick,-]
\title{Diffraction Tomography for a Generalized Incident Field}
\author{Clemens Kirisits$^{1}$\\
		{\footnotesize\href{mailto:email}{clemens.kirisits@univie.ac.at}}
		\and Noemi Naujoks$^{1,3}$\\
		{\footnotesize\href{mailto:email}{noemi.naujoks@univie.ac.at}}
		\and Otmar Scherzer$^{1,2,3}$\\	
		{\footnotesize\href{mailto:email}{otmar.scherzer@univie.ac.at}}
}
\date{\today}
\newtheorem{lemma}{Lemma}[section]
\newaliascnt{proposition}{lemma}
\newaliascnt{corollary}{lemma}
\newaliascnt{theorem}{lemma}
\newtheorem{theorem}[theorem]{Theorem}
\newaliascnt{definition}{lemma}
\newtheorem{definition}[definition]{Definition}
\newaliascnt{assumption}{lemma}
\newaliascnt{remark}{lemma}
\newtheorem{remark}[remark]{Remark}
\newaliascnt{example}{lemma}
\newtheorem{example}[example]{Example}
\theoremstyle{nonumberplain}
\newtheorem{proof}{Proof}
\titleformat{\section}[block]{\large\sc\filcenter}{\thesection.}{0.5ex}{}[]
\titleformat{\subsection}[runin]{\bf}{\thesubsection.}{0.5ex}{}[.]
\newcommand{\N}{\mathds{N}}
\newcommand{\Z}{\mathds{Z}}
\newcommand{\R}{\mathds{R}}
\newcommand{\C}{\mathds{C}}
\newcommand{\usc}{u^{\text{sca}}}
\newcommand{\ut}{u^{\text{tot}}}
\newcommand{\ui}{u^{\text{inc}}}
\newcommand{\Sp}{\mathbb{S}^1}
\newcommand{\Card}{\operatorname{Card}}
\let\RE\Re
\let\Re=\undefined
\DeclareMathOperator{\Re}{\RE e}
\let\IM\Im
\let\Im=\undefined
\DeclareMathOperator{\Im}{\IM m}
\DeclareMathOperator*{\argmin}{arg\,min}
\newcommand{\abs}[1]{\left|#1\right|}
\newcommand{\norm}[1]{\left\|#1\right\|}
\tikzset{
	halfarrow/.style={postaction={decorate},
		decoration={markings,mark=at position .4 with
			{\arrow{>}}}}}
\begin{document}
	
	\maketitle
	\thispagestyle{empty}
	\begin{center}
		\hspace*{5em}
		\parbox[t]{12em}{\footnotesize
			\hspace*{-1ex}$^1$Faculty of Mathematics\\
			University of Vienna\\
			Oskar-Morgenstern-Platz 1\\
			A-1090 Vienna, Austria}
		\hfil
		\parbox[t]{17em}{\footnotesize
			\hspace*{-1ex}$^2$Johann Radon Institute for Computational\\
			\hspace*{1em}and Applied Mathematics (RICAM)\\
			Altenbergerstraße 69\\
			A-4040 Linz, Austria}\\
		\vspace*{0.5cm}
		\hspace*{5em}
		\parbox[t]{18em}{\footnotesize
			\hspace*{-1ex}$^3$Christian Doppler Laboratory for Mathematical\\
			\hspace*{1em}Modelling and Simulation of Next Generation\\
			\hspace*{1em}Medical Ultrasound Devices (MaMSi)\\
			Oskar-Morgenstern-Platz 1\\
			A-1090 Vienna, Austria}
	\end{center}
	\begin{abstract}
		Diffraction tomography is an inverse scattering technique used to reconstruct the spatial distribution of the material properties of a weakly scattering object. The object is exposed to radiation, typically light or ultrasound, and the scattered waves induced from different incident field angles are recorded. In conventional diffraction tomography, the incident wave is assumed to be a monochromatic plane wave, an unrealistic simplification in practical imaging scenarios. In this article, we extend conventional diffraction tomography by introducing the concept of customized illumination scenarios, with a pronounced emphasis on imaging with focused beams. We present a new forward model that incorporates a generalized incident field and extends the classical Fourier diffraction theorem to the use of this incident field. This yields a new two-step reconstruction process which we comprehensively evaluate through numerical experiments. 
	\end{abstract}
	\section{Introduction}\label{introduction}
The task of reconstructing the internal structure of an object from scattered radiation has traditionally been known as the inverse scattering problem \cite{ColKre92}. In this context, an object is exposed to a specific form of radiation, the resulting scattered waves are recorded, and the scattering potential is reconstructed from these recordings. Diffraction tomography (DT) provides a solution to the inverse scattering problem.

It is typically based on either the \textit{Born} or the \textit{Rytov approximation}, which simplifies the relationship between scattered wave and scattering potential by assuming that the object is only weakly scattering \cite{KakSla01,MueSchuGuc15_report, Nat15, Wol69}. This simplification allows DT to offer explicit solutions to the inverse scattering problem, enabling fast implementation and making it attractive for real-world applications. Examples include \emph{optical diffraction tomography} for visualizing living biological cells \cite{SunChoFanBadDas09}  via tomographic  phase microscopy, and medical ultrasound imaging, known as \emph{ultrasound computed tomography}, particularly used for breast imaging \cite{Nat15,SimHuaDur08, SimHuaDurLit09}. 

E. Wolf laid the mathematical foundation for DT in \cite{Wol69}. His central result is the \textit{Fourier diffraction theorem}. In a two-dimensional context, it states that the Fourier-transformed measurements of the scattered wave are equal to the 2D Fourier transform of the scattering potential on a semicircle. Incorporating a collection of measurements taken at different incident field angles generates more data in Fourier space, enabling the scattering potential to be extracted through a technique known as  \textit{filtered backpropagation}, see \cite{Dev82, KakSla01, KirQueRitSchSet21, Wol69}. 

The applicability of the Fourier diffraction theorem, and thus the reconstruction process in conventional DT, relies on the assumption that the incident wave is a monochromatic plane wave. However, this assumption is violated in various imaging applications. For instance, in ultrasound imaging the width of the transducer limits the emitted sound wave to a beam.  More significantly, new devices  provide high flexibility in controlling the shape of the wave sent into the body, which opens up the possibility to experiment with new beam designs. In this way, ultrasound devices can, for instance, emit focused beams to a region of interest in the human body to achieve a better resolution in the far field \cite{Kut91}.

Numerous efforts have been undertaken to extend the foundational concept of DT to alternative illumination scenarios. For instance, A. Devaney and G. Beylkin presented in \cite{Dev84b} how the scattered field in ultrasound imaging setups with arbitrarily configured source and receiver surfaces can be related to the conventional plane-wave forward problem. In their research, they present reconstruction algorithms for both plane wave and cylindrical wave illumination. Furthermore, Synthetic Aperture Diffraction Tomography (SADT), initially introduced by \cite{NahPanKak84}, provides reconstruction from scattered waves that are acquired by using two parallel single-element transducers (i.e. point sources) moving along a straight line. This methodology does not afford the capability to rotate the incident illumination around the object. Subsequently, SADT was further developed in \cite{SimHua09}, specifically within the context of breast ultrasound tomography. This approach includes a toroidal array transducer that encircles the breast and allows for simultaneously transmitting plane waves from different directions and receiving the resulting scattered waves.

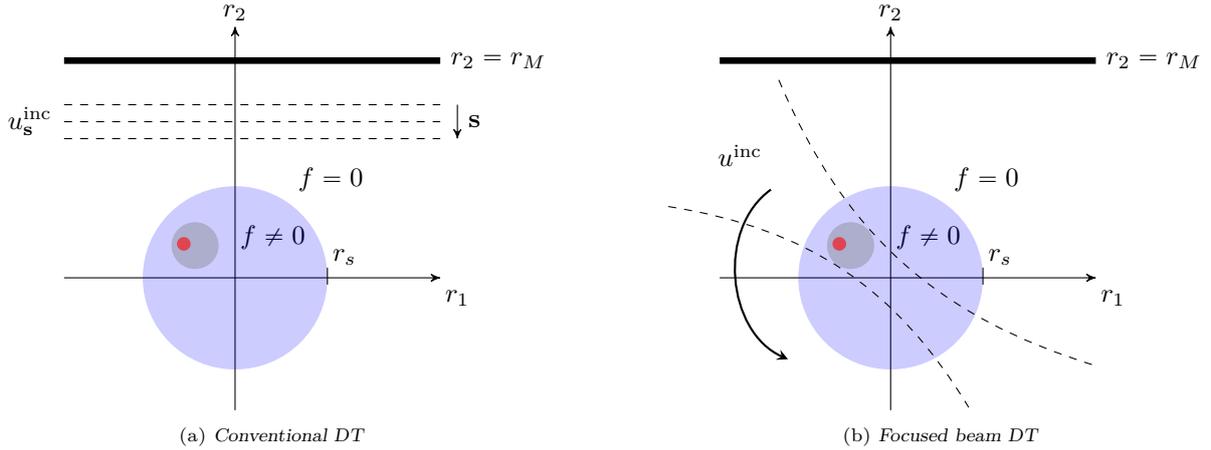
\begin{figure}
	\begin{subfigure}[t]{0.45\textwidth}
		\begin{tikzpicture}[scale=0.45,
			>=stealth',
			pos=.8,
			photon/.style={decorate,decoration={snake,post length=1mm}}
			]
			
			\draw[->] (-3,-0.6)--(8,-0.6)  ;
			\draw[->] (2,-4.5)--(2,6.8)  ;
			\node at (8.5,-1.2) {$r_1$};
			\node at (2,7.2) {$r_2$};
			\node at (4.8,2.3) {$f = 0$};
			\node at (3.1,0.6) {$f \neq 0$};
			
			\fill[blue,opacity=0.2] (2,-0.6) circle (2.7);
			\draw (4.7,-0.8) -- (4.7,-0.3);
			\node at (5.2,0) {$r_s$};
			\fill[gray,opacity=0.4] (0.83,0.35) circle (0.69);
			\fill [red, opacity=0.6]  (0.5,0.4) circle (0.2);
			
			\draw[dashed] (-3,3.5) -- (8,3.5);
			\draw[dashed] (-3,4) -- (8,4);
			\draw[dashed] (-3,4.5) -- (8,4.5);
			\draw[->] (8.5,4.5) -- (8.5,3.5);
			\node at (9,4) {$\mathbf s$};
			
			\node at (-4,4) {$\ui_{\mathbf{s}}$};

			\draw[line width = 2.5pt] (-3,5.8) -- (8,5.8);
			\node at (9.7,5.8) {$r_2 = r_M$};
		\end{tikzpicture}
		\caption{Conventional DT}\label{subfig:convDT}
	\end{subfigure}\hfill
	\begin{subfigure}[t]{0.45\textwidth}
		\begin{tikzpicture}[scale=0.45,
			>=stealth',
			pos=.8,
			photon/.style={decorate,decoration={snake,post length=1mm}}
			]
			
			\draw[->] (-3,-0.6)--(8,-0.6)  ;
			\draw[->] (2,-4.5)--(2,6.8)  ;
			\node at (8.5,-1.2) {$r_1$};
			\node at (2,7.2) {$r_2$};
			\node at (4.8,2.3) {$f = 0$};
			\node at (3.1,0.6) {$f \neq 0$};
			
			\fill[blue,opacity=0.2] (2,-0.6) circle (2.7);
			\draw (4.7,-0.8) -- (4.7,-0.3);
			\node at (5.2,0) {$r_s$};
			\fill[gray,opacity=0.4] (0.83,0.35) circle (0.69);
			\fill [red, opacity=0.6]  (0.5,0.4) circle (0.2);

			\coordinate (O) at (-0.5,6,2);
			\coordinate (A) at (10.2,-1,5.7);
			\coordinate (P) at (6.4,-2.3,5.5);
			\coordinate (Q) at (-4.5,1.5,0);
			
			\draw[color=black, dashed ] (O) to [bend right=25] (A);
			
			\draw[color=black, dashed ] (Q) to [bend left=25] (P);
			\node at (-2.4,3) {$\ui$};

			\draw [arrow, bend angle=60, bend right] (-1.5,2) to (-1,-3);
			
			\draw[line width = 2.5pt] (-3,5.8) -- (8,5.8);
			\node at (9.7,5.8) {$r_2 = r_M$};
		\end{tikzpicture}
		\caption{Focused beam DT} \label{subfig:focusedDT}
	\end{subfigure}
	\caption{ \textbf{Comparison of experimental setups.} In both cases the scattering potential $f$ is supported in $\mathcal{B}_{r_s}$ while measurements are taken at $r_2=  r_M>r_s$. (a) Conventional DT setup: Incident field $\ui_{\mathbf{s}}$ is a plane wave propagating in direction $\mathbf{s}$. Additional data are typically generated by varying $\mathbf{s}$ or rotating the object. (b) DT setup considered here: Generalized incident field $\ui$ of the form \eqref{eqn: Inc}, e.g.\ a focused beam, making a full $360^\circ$ rotation during data acquisition.}	\label{fig:concept}
\end{figure}
\subsection*{Contributions} Motivated by applications in ultrasound tomography, this article is devoted to extending DT to accommodate arbitrary incident field designs. We place particular emphasis on focused beams, which are commonly used in various tomographic contexts. A comparison of the experimental setup of conventional DT with the new beam DT approach is visualized in \autoref{fig:concept}. 

The fundamental concept is to model a generalized incident field via superposition of monochromatic plane waves, each characterized by different propagation direction and amplitude  \cite{BorWol99, Goo05}. We present a customized Fourier diffraction theorem, serving as the basis for reconstructing the scattering potential from a set of detected scattered waves. The measurements are obtained by rotating the beam around the object and capturing the resulting scattered waves along a fixed measurement line in front of the object. While the original Fourier diffraction theorem provides a \emph{direct} relation between the measurements and the scattering potential in Fourier space, this modified version involves an integral operator acting on the Fourier-transformed scattering potential. As a result, it establishes an \emph{indirect} relation, requiring the reconstruction of the scattering potential to be divided into two separated inversion steps: First, we invert the arising integral operator to extract the Fourier data of the scattering potential using truncated singular value decomposition. Subsequently, the scattering potential is reconstructed via Fourier inversion in the second step. 

In addition to the theoretical findings presented in this article, we perform numerical experiments to assess our proposed method. 
\subsection*{Outline} 
The article is organized as follows: We start with presenting the mathematical concept for the beam DT approach in \autoref{sec:forward}. It contains the formulation of a generalized incident field and the introduction of the underlying wave propagation model based on the Born approximation. Subsequently, we delve into the analysis of this forward model in the Fourier domain by deriving an extension of the Fourier diffraction theorem. With this fundament, we turn our attention to the inverse problem in \autoref{sec:invprob}, where we introduce a two-step technique for extracting the scattering potential from the measurements. Thereafter, we discuss in \autoref{sec:numerics} the numerical implementation of our developed method that is subsequently validated by means of numerical examples in \autoref{sec:experiments}. This article ends with a conclusion of our results in \autoref{sec:conclusion}.
	
\section{Forward Model}\label{sec:forward}
In this section, we provide an overview of the mathematical model that serves as the foundation for our tomographic reconstruction.  The concrete model under investigation follows the standard assumptions of DT by assuming that the acquired measurement data contain information about both the amplitude and phase of the scattered waves.
Furthermore, we operate under the assumption of scalar wave propagation. Hence, the wave propagation models are the same, regardless the motivation stems from acoustic or electromagnetic imaging modalities, making our results adaptable and applicable in both imaging techniques.
\subsection{Conceptual setup}
Although we are interested in realistic imaging applications, i.e. reconstructing three-dimensional objects, this article deals with an experimental setup in $\R^2$. This limitation is justified by the fact that the two-dimensional theory directly can be applied to three-dimensional objects that vary only slightly along one of the dimensions. Furthermore, images are often built using single slices of the object. 

The object we intend to quantify tomographically is assumed to be enclosed by the disk $\mathcal{B}_{r_s}(\mathbf{0})\coloneqq \{ \mathbf{r}\in\R^2:\norm{\mathbf{r}} < r_s \}$ centered at $\mathbf{0}$ with radius $r_s$. It is described by  the local refractive index $n : \R^2 \to \C$ with $n(\mathbf{r}) = 1$ on the complement of $\mathcal{B}_{r_s}(\mathbf{0})$. 

Further, let $\lambda$ denote the wavelength and $k_0 =\frac{ 2\pi}{\lambda}$ the wave number. The function
\begin{align}\label{eq: scattering_pot}
	f(\mathbf{r}) = k_0^2\left[n(\mathbf{r})^2-1\right],\qquad \mathbf r\in\R^2
\end{align}
defines the \textit{scattering potential} of the object \cite{Wol69} and is the quantity we aim to reconstruct. Note that $f$ differs only in $\mathcal{B}_{r_s}$ from $\mathbf{0}$ such that we observe
\begin{align*}
	\operatorname{supp}(f)\subseteq \mathcal{B}_{r_s}\subset(-r_s,r_s)^2
\end{align*}
by construction. An incident wave $\ui$ that propagates through the object induces a scattered wave $\usc$.
In our experiment, $\ui$ is rotated around the object, while the  scattered wave is recorded on a fixed measurement line $\{\mathbf{r}\in\R^2:r_2=r_M\}$, with $r_M>r_s$. See \autoref{subfig:focusedDT} for an illustration of the experimental setup.

\subsection{A generalized incident field}
Under the assumption that the incident field $\ui$ is represented by a time-harmonic solution of the wave equation, it satisfies the Helmholtz equation
\begin{align}\label{eqn: Helmholtz}
	\Delta\ui(\mathbf{r}) +k_0^2\ui(\mathbf{r}) = 0, \qquad \mathbf{r}\in\mathbb{R}^2
\end{align}
in free space, see \cite[section 3.3]{ColKre92}. In conventional diffraction tomography, the incident wave is assumed to be the simplest solution of \autoref{eqn: Helmholtz}, that is a monochromatic plane wave 
\begin{equation}\label{eq:plw}
	\ui_{\mathbf{s_0}}(\mathbf{r}) \coloneqq e^{ik_0\mathbf{r}\cdot\mathbf{s}_0}, 
\end{equation}
propagating in direction $\mathbf{s}_0\in\Sp\coloneqq\{\mathbf{s}\in\R^2:\norm{\mathbf{s}}= 1\}$. To extend DT to the use of arbitrary illumination scenarios satisfying \autoref{eqn: Helmholtz}, we suggest modeling the incident field as a superposition of monochromatic plane waves having different propagation directions
\begin{align}\label{eqn: Inc}
	\ui(\mathbf{r}) = \int_{\mathbb{S}^1} a(\mathbf{s}) e^{i k_0 \mathbf{r}\cdot\mathbf{s}}ds, 
\end{align}
where $ds = ds(\mathbf{s})$ is the standard line element and the function $a\in L^2(\mathbb{S}^1)$ specifies the amplitude of the individual plane waves arriving from each given direction $\mathbf{s}\in\mathbb S^1$. The Hilbert space $L^2(\mathbb S^1)$ is defined as the space of all square integrable functions $a: \mathbb S^1\to \mathbb C$. The technique of expanding a complex wave field into a superposition of plane waves having the same frequency but different propagating directions has its origin in the field of Fourier optics and is referred to as \textit{angular spectrum representation}, see \cite{BorWol99, ManWol95} or \cite[section 3.10]{Goo05}.

The function $a\in L^2(\Sp)$, which we refer to as \emph{beam profile} hereafter, can be chosen almost arbitrarily. This flexibility is what allows \autoref{eqn: Inc} to accommodate a wide range of incident fields.
In the theoretical part of this work, we refrain from constraining ourselves to a particular beam profile. Nevertheless, we will now introduce the concept of focused beams, as they hold relevance in a variety of tomographic applications.
\begin{example}\label{ex:gaussbeam}
Suppose the object is illuminated from the top, so that only downward propagating directions $\mathbf s= (s_1,s_2)\in\Sp$, $s_2<0$ are considered. Then, we define a \emph{Gaussian beam} to be the wave of the form in \autoref{eqn: Inc} having a Gaussian profile
\begin{align}\label{eq:gaussian_a}
	a(\mathbf{s}) =
	\begin{cases}
		e^{-As_1^2}, \quad &s_2 < 0,\\
		0,\quad &s_2>0,
	\end{cases} 
\end{align} 
where the parameter $A>0$ affects the beam waist, that is the diameter in the focus.
\end{example}
The term \textit{Gaussian beam} has its origin in optics and its construction according to \autoref{eqn: Inc} together with \autoref{eq:gaussian_a} can be found for instance in \cite{AgrPat79} or \cite[section 5.6.2]{ManWol95}. In \autoref{fig:gaussian_beams} an illustration for different amplitudes $A>0$ is given. Increasing $A$ narrows the Gaussian profile, resulting in a wider beam waist. 
\begin{figure}[t]
	\centering
	\subfloat[\tiny$A=10$]{\includegraphics[scale=0.31]{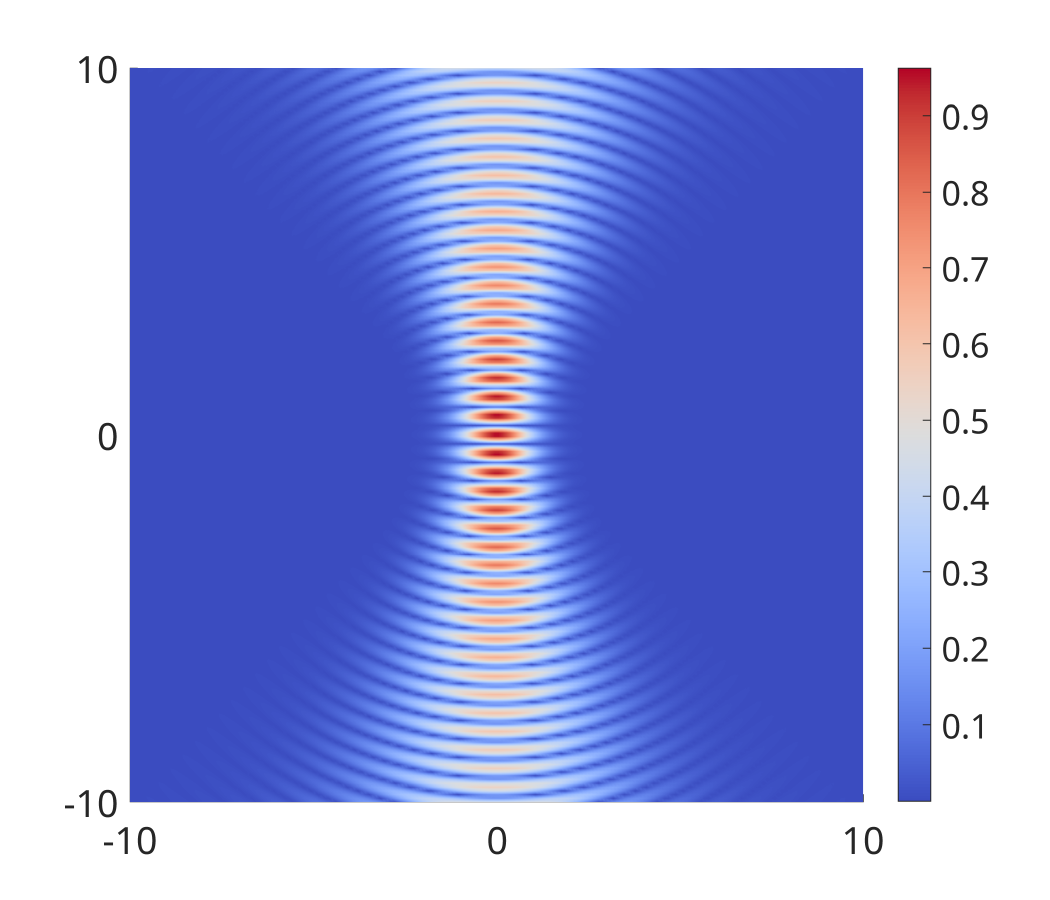}}
	\subfloat[\tiny$A=80$]{\includegraphics[scale=0.31]{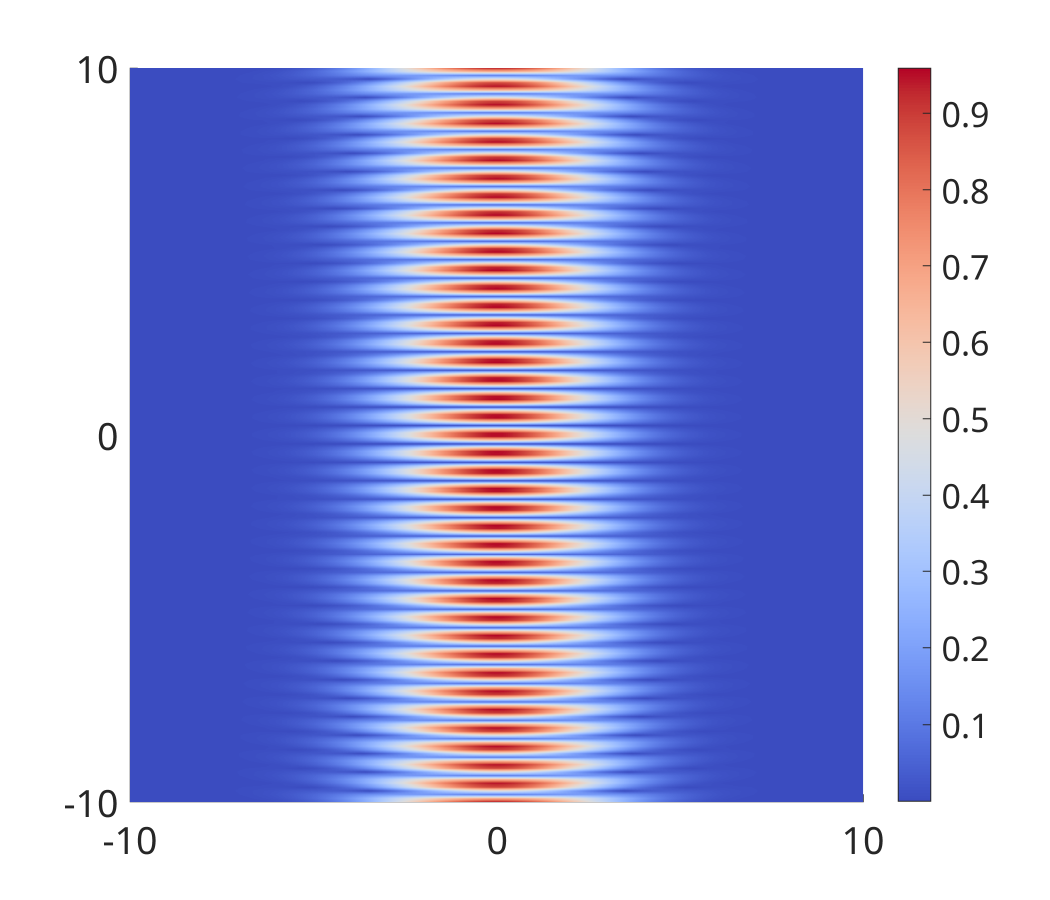}}
	\subfloat[Plane wave]{\includegraphics[scale=0.31]{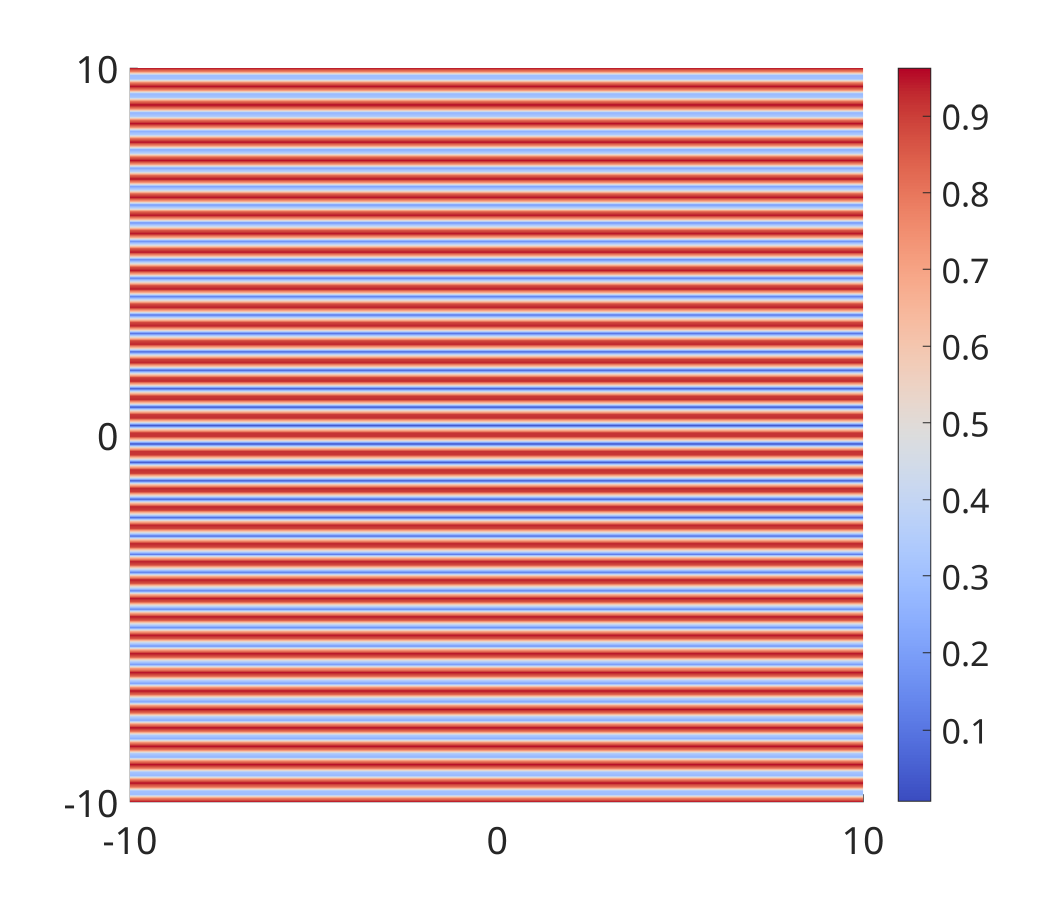}}
	\caption{\textbf{Incident illumination scenarios.}
		 (a) and (b) visualize waves $\abs{\operatorname{Re}(\ui)}$ of the form described in \autoref{eqn: Inc} with a Gaussian profile given via \autoref{eq:gaussian_a}, for different amplitudes $A$. (c) shows  a plane wave $	\ui_{\mathbf{s_0}}$ propagating in direction $\mathbf{s}_0 = (0,-1)^\intercal$.  The wavelength in all illumination settings is $\lambda = \frac{2\pi}{k_0} = 1$. }
	\label{fig:gaussian_beams}
\end{figure}

\subsection{The Born approximation} The total field $\ut$ is composed additively of the incident and the scattered wave 
\begin{equation}\label{eq:Huygen}
	\ut = \ui +\usc.
\end{equation}
Considering time harmonic waves, it satisfies the reduced wave equation \cite[Section 2.1]{ColKre92}
\begin{align}\label{eqn: Total}
	\Delta\ut(\mathbf{r})+k_0^2n^2(\mathbf{r})\ut(\mathbf{r}) = 0,\qquad\mathbf{r}\in\R^2.
\end{align} 
For the formulation of our forward model, we need to calculate the scattered wave. Inserting \autoref{eqn: Helmholtz} in \autoref{eqn: Total} leads to
\begin{align}\label{eqn: Scattered}
	\Delta\usc(\mathbf{r}) +k_0^2\usc(\mathbf{r}) = -f(\mathbf{r})(\ui(\mathbf{r}) +\usc(\mathbf{r})).
\end{align} 
To ensure uniqueness for the scattered wave, we consider in the following only outgoing waves satisfying the \textit{Sommerfeld radiation condition}, see \cite{Som12, ColKre92}.

The inverse problem of determining  $f$ from $\usc$  is inherently non-linear. 
However, if the refractive index distribution $n$ does not deviate much from the homogeneous background, the scattering effect is weak enough so that multiple scattering can be ignored. In that case one can assume $|\usc| \ll |\ui|$, which allows to replace \autoref{eqn: Scattered} by 
\begin{align}\label{eqn: Born}
	(\Delta  +k_0^2)u(\mathbf{r}) = -f(\mathbf{r})\ui(\mathbf{r}), 
\end{align}
where $u$ is the \textit{Born approximation} \cite{Wol69} to the scattered field.  
The unique outgoing solution of \autoref{eqn: Born} can be obtained by means of convolution
\begin{align}\label{eqn: u}
	u(\mathbf{r}) = \int_{\mathbb{R}^2} G(\mathbf{r}-\mathbf{r}')f(\mathbf{r}') \ui(\mathbf{r}')d\mathbf{r}', \qquad \mathbf r\in\R^2,
\end{align}
with the outgoing fundamental solution
\begin{equation*}
	G(\mathbf{r}) =\frac{i}{4}H_0^{(1)}(k_0\norm{\mathbf{r}}),
\end{equation*}
where $H_0^{(1)}$ defines the zeroth order Hankel function of the first kind \cite[Section 2.2]{ColKre92}.

\begin{remark}
	The validity of the Born approximation is limited to weakly scattering objects. In \cite{CheSta98,KakSla01, SlaKakLar84} a detailed classification of objects satisfying this assumption is provided.	There, they also discuss an alternative way to linearize \autoref{eqn: Scattered} via \textit{Rytov approximation} \cite{Dev81} and provide a comparison of the approximations.
\end{remark}

\subsection{Comparison of the forward models}\label{sec:comparison}
Now, in its mathematical formulation, the forward problem reduces to the calculation of $u\big|_{r_2 = r_M}$, which is induced by illuminating an object with $\ui$, via \autoref{eqn: u}. The original concept of DT assumes that $\ui$ is a plane wave,  whereas we have replaced this assumption with a weighted superposition of plane waves, see \autoref{eqn: Inc}. Next, we perform a numerical comparison of these forward models.

We discretize the convolution \autoref{eqn: u} using the discrete setting suggested in \autoref{sec:experiments}.
The illuminated test sample is specified by the characteristic function of the disk $\mathcal{B}_d$ with radius $d$, that is 
\begin{align}\label{eq:discsample}
	f_d(\mathbf{r}) \coloneqq \mathbf{1}_{\mathcal{B}_d}(\mathbf{r})\qquad \mathbf r\in\R^2.
\end{align}
 The results are shown in \autoref{fig:comp_forward_models}, where we compared the forward models using a downward propagating plane wave and a Gaussian beam with $A=10$ as imaging setup. The resulting scattered waves and hence the measured data significantly differ in their shapes and amplitudes. It is therefore requisite  to incorporate the correct illumination scenario in the reconstruction problem of DT.
	\begin{figure}[t]
		\centering
		\subfloat[Plane wave imaging]{\includegraphics[scale=0.5]{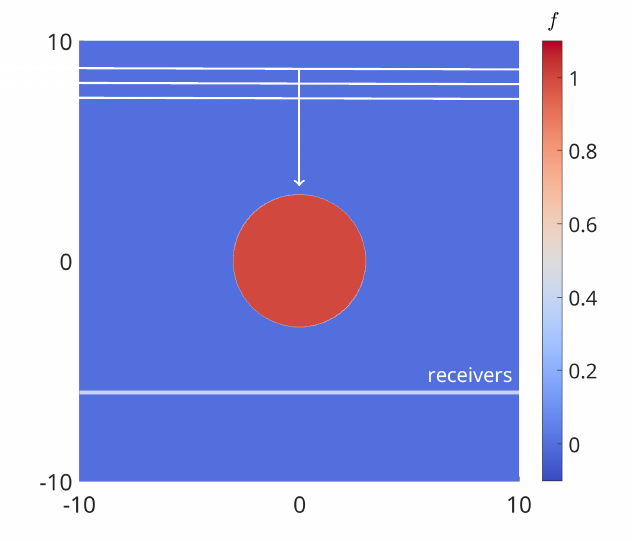}}
		\subfloat[$\operatorname{Re}(u)$]{\includegraphics[scale=0.3]{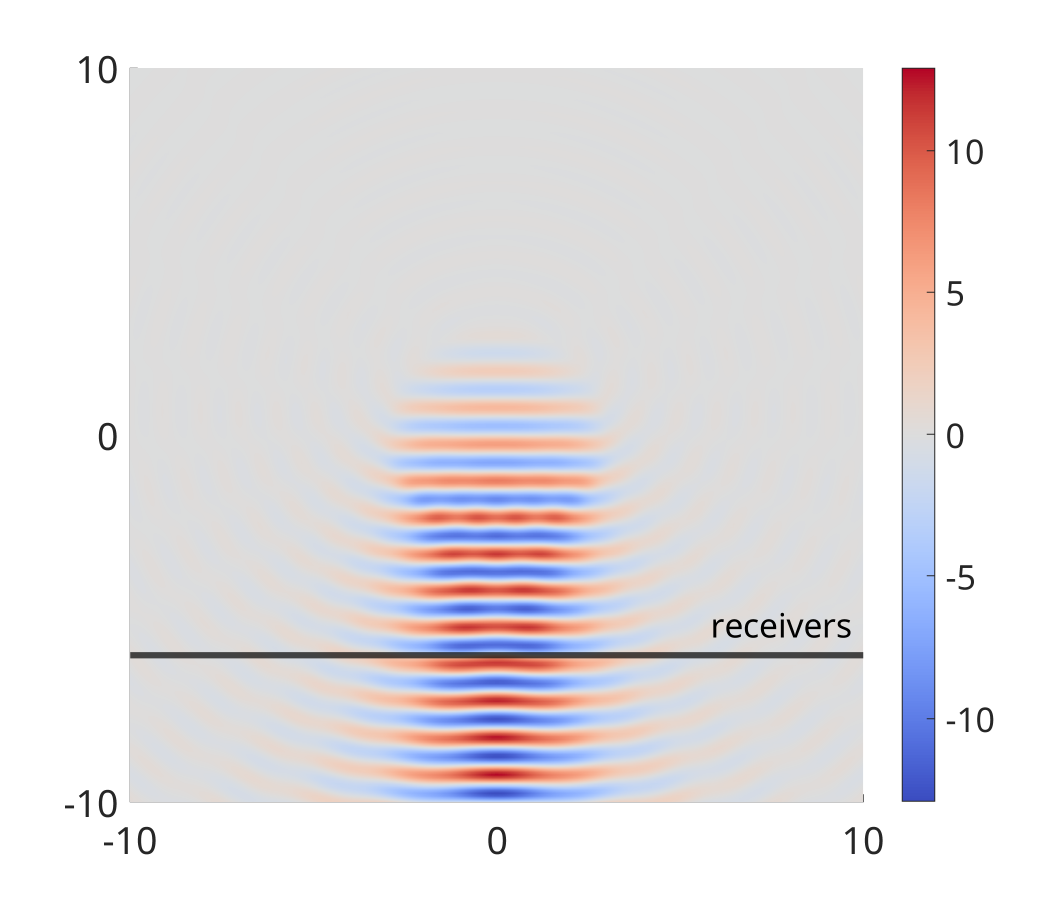}}
		\subfloat[$\operatorname{Re}\left(u\big|_{r_2 = -6}\right)$ ]{\includegraphics[scale=0.3]{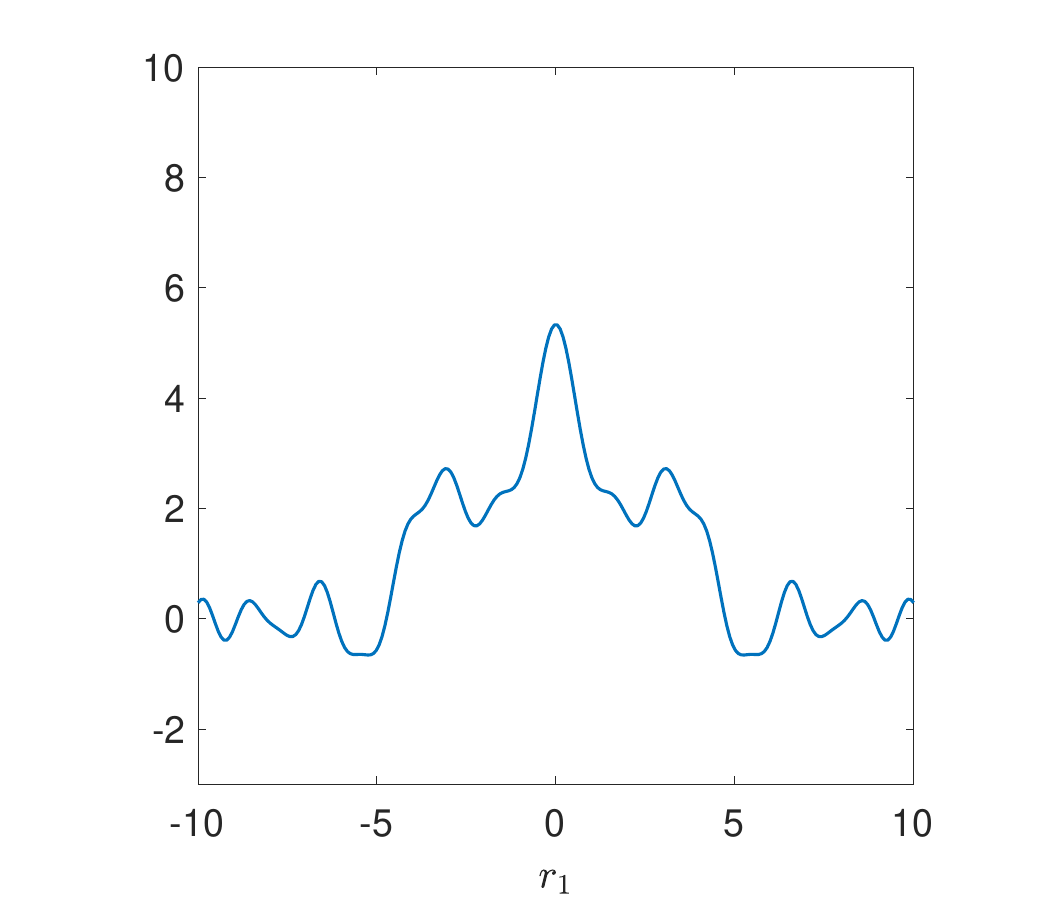}}\\
		\subfloat[Focused imaging]{\includegraphics[scale=0.5]{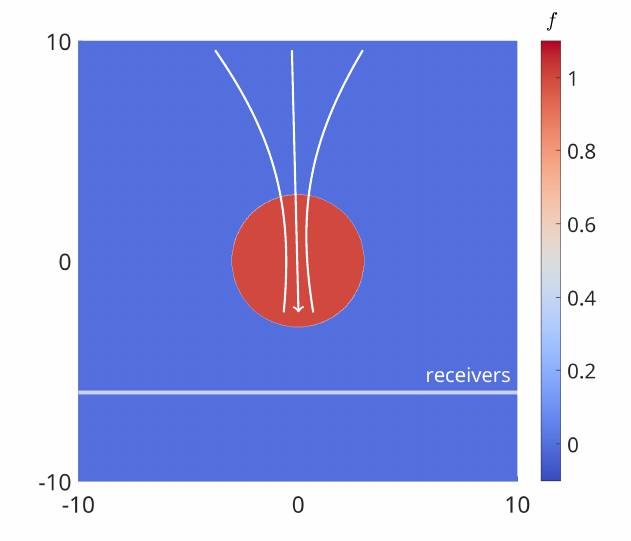}}
		\subfloat[$\operatorname{Re}(u)$]{\includegraphics[scale=0.3]{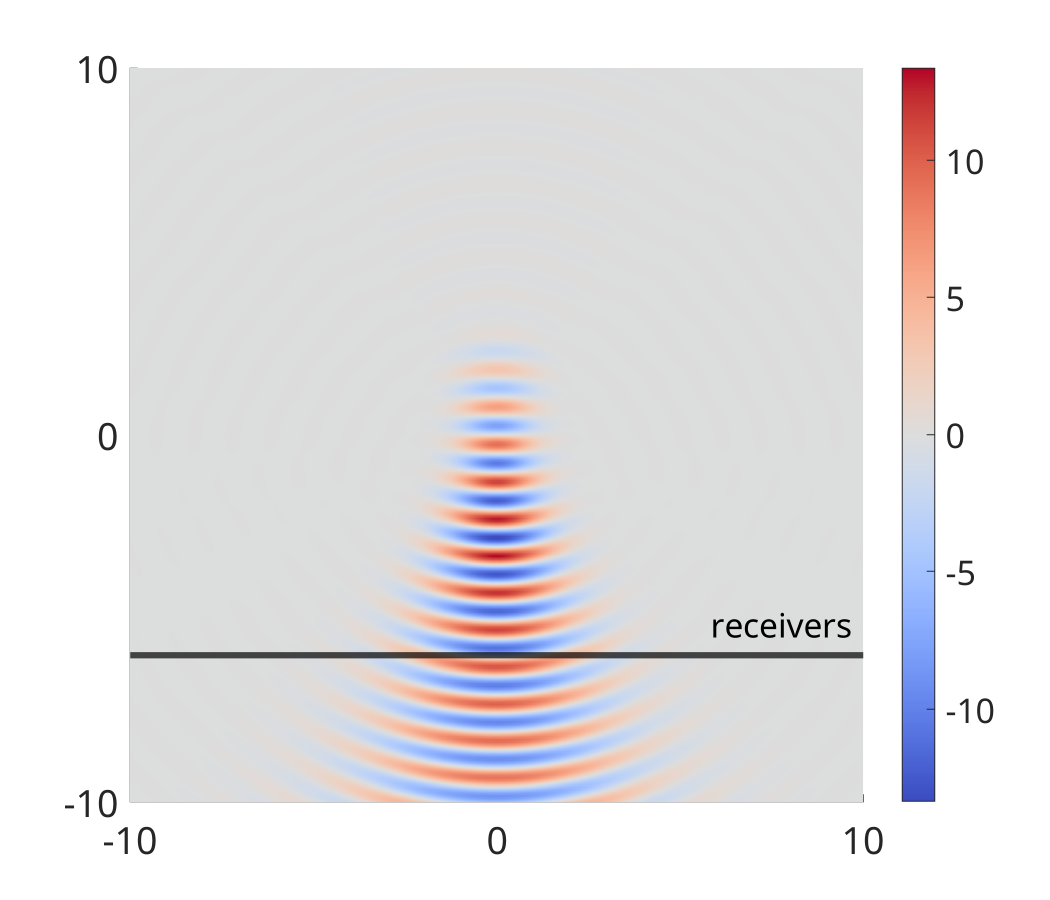}}
		\subfloat[$\operatorname{Re}\left(u\big|_{r_2 = -6}\right)$]{\includegraphics[scale=0.3]{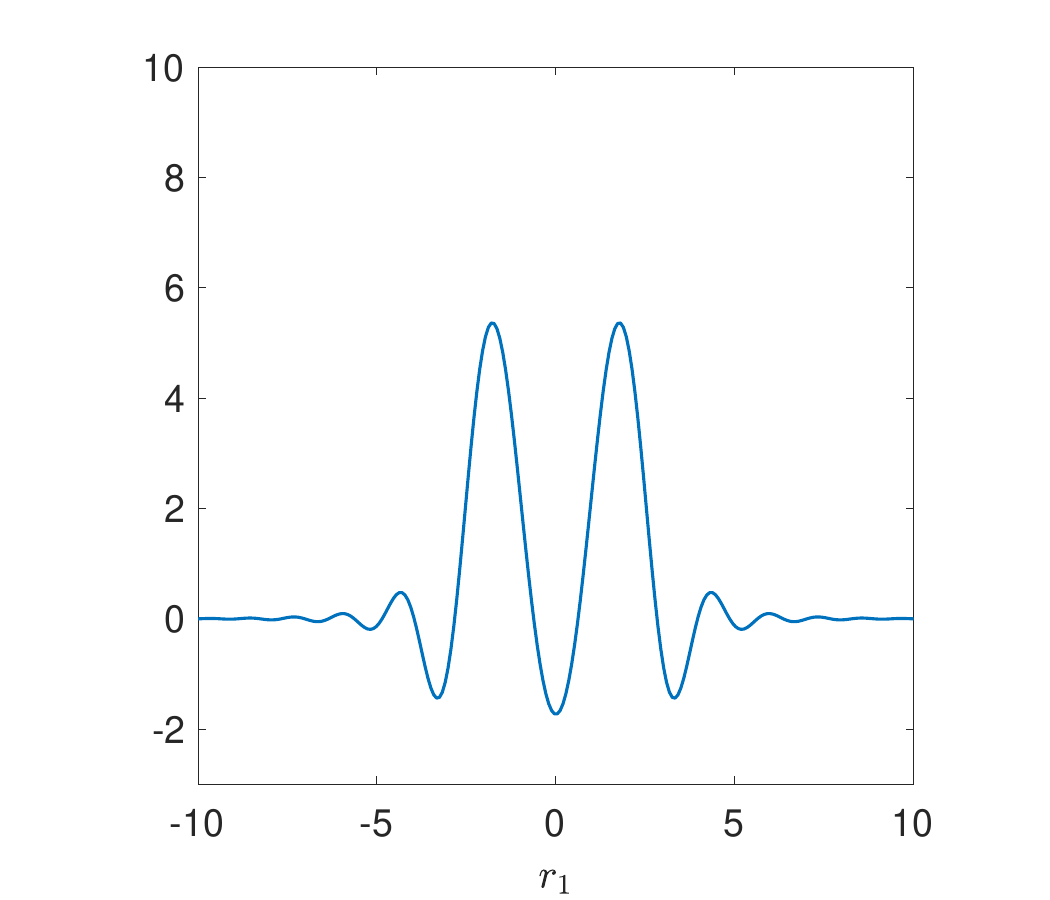}}
		\caption{\textbf{Comparison of the forward models.} An object is specified as the characteristic function $f_3\coloneqq\mathbf{1}_{\mathcal{B}_3}$. The scattered wave under Born approximation from \autoref{eqn: u} is calculated using plane wave illumination on the one hand and focused imaging on the other hand. For illumination we used a downward propagating plane wave $\ui_{\mathbf{s_0}}$, $\mathbf{s}_0 = (0,-1)^\intercal$ and a focused beam from \autoref{eq:gaussian_a} with parameter $A = 10$. In both cases the wave number is chosen as $\lambda = \frac{2\pi}{k_0}=1$ and the measurements are taken at $\{\mathbf{r}\in\R^2:r_2=-6\}$.}
		\label{fig:comp_forward_models}
	\end{figure}
\subsection{A Fourier diffraction relation}
Next, regarding the reconstruction of $f$ from a set of scattering experiments, it is convenient to examine the forward model in the spatial frequency domain, the \textit{$k$-space}. In doing so, we introduce some notation. 

Let $\mathcal{F}$ denote the $N$-dimensional Fourier transform 
\[
\mathcal F\phi(\mathbf{k}) = (2\pi)^{-\frac{N}{2}} \int_{\R^{N}} \phi(\mathbf{r}) e^{-i \mathbf{k}\cdot \mathbf{r}} d\mathbf{r}, \quad \mathbf{k} \in \R^{N}.
\]

Moreover, for a frequency $k\in(-k_0,k_0)$ we define
\begin{align}\label{def:kappa}
	\kappa(k) \coloneqq \sqrt{{k_0^2-k^2}},
\end{align}
and introduce the wave vector
\begin{align}\label{def:h}
	\mathbf{h}(k) \coloneqq (k,\kappa(k))^\intercal\in\R^2.
\end{align}
In conventional DT, the well-known Fourier diffraction theorem lays the theoretical foundation for reconstructing the scattering potential. This theorem establishes a relation between the detected scattered wave and the scattering potential in $k$-space.  Detailed explanations of this fundamental result can be found for instance in \cite{Wol69}, \cite[section 6.3]{KakSla01}, or \cite[section 3.3]{NatWue01}. However, this theorem requires that the scattered wave is induced by the illumination of the object with a plane wave. In our approach, the scattered wave results from illuminating the object with a superposition of plane waves. As a consequence, we obtain the following adapted Fourier diffraction relation.
\begin{theorem}\label{thm:FDT}
	Let $f\in L^2(\R^2)$ with $\operatorname{supp}(f)\subset\mathcal{B}_{r_s}$ and $a\in L^2(\mathbb S^1)$. Then, for $k\in(-k_0,k_0)$ and with the definitions introduced above, we have
	\begin{align}\label{eqn: FDT2}
		\mathcal{F} \big( u\big|_{r_2=r_M} \big) (k) = \sqrt{\frac{\pi}{2}}\frac{ie^{i\kappa(k) r_M}}{\kappa(k)} \int_{\mathbb{S}^1}a(\mathbf{s})\mathcal{F}f({\bf h}(k)-k_0\mathbf{s})ds.
	\end{align}
\end{theorem}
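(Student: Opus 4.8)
The plan is to substitute the angular-spectrum (Weyl) representation of the outgoing fundamental solution $G$ into the convolution \eqref{eqn: u}, exploit the geometric separation $r_M>r_s$, interchange the order of integration, and then read off the one-dimensional Fourier transform directly from the resulting expression.

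First I would recall the Weyl representation
\[
	G(\mathbf{r}) = \frac{i}{4\pi}\int_{\R}\frac{e^{i(k r_1 + \kappa(k)\abs{r_2})}}{\kappa(k)}\,dk,
\]
valid for the unique outgoing fundamental solution, where for $\abs{k}>k_0$ one continues $\kappa(k)=i\sqrt{k^2-k_0^2}$ so that the evanescent modes decay in $\abs{r_2}$; this is classical, see for instance \cite{ColKre92,BorWol99}. Writing $g\coloneqq f\cdot\ui$ and inserting this into \eqref{eqn: u} at $\mathbf{r}=(r_1,r_M)$, the decisive observation is that $\operatorname{supp}(g)\subseteq\operatorname{supp}(f)\subset\mathcal{B}_{r_s}$ together with $r_M>r_s$ forces $\abs{r_M-r_2'}=r_M-r_2'\geq r_M-r_s>0$ for every $\mathbf{r}'$ in the domain of integration. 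This turns the $\abs{k}>k_0$ part of the $k$-integral into an absolutely convergent, exponentially decaying integral and thereby licenses Fubini's theorem: $g\in L^1(\R^2)$ because $f\in L^2(\R^2)$ has compact support and $\ui$ is bounded (by $\norm{a}_{L^1(\mathbb{S}^1)}\leq\sqrt{2\pi}\,\norm{a}_{L^2(\mathbb{S}^1)}$, since $\abs{e^{ik_0\mathbf{r}\cdot\mathbf{s}}}=1$), while $\int_{\R}\abs{\kappa(k)}^{-1}e^{-\Im\kappa(k)\,(r_M-r_2')}\,dk$ is finite and bounded uniformly for $r_2'\in(-r_s,r_s)$. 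Carrying out the $\mathbf{r}'$-integration first, the inner integral is $\int_{\R^2}g(\mathbf{r}')e^{-i\mathbf{h}(k)\cdot\mathbf{r}'}\,d\mathbf{r}'=2\pi\,\mathcal{F}g(\mathbf{h}(k))$.

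This yields
\[
	\big(u\big|_{r_2=r_M}\big)(r_1)=\frac{i}{2}\int_{\R}\frac{e^{i\kappa(k)r_M}}{\kappa(k)}\,\mathcal{F}g(\mathbf{h}(k))\,e^{ik r_1}\,dk,
\]
that is, $u\big|_{r_2=r_M}$ equals, up to the factor $\tfrac{i}{2}\sqrt{2\pi}=i\sqrt{\pi/2}$, the inverse one-dimensional Fourier transform of $k\mapsto\kappa(k)^{-1}e^{i\kappa(k)r_M}\mathcal{F}g(\mathbf{h}(k))$. Applying $\mathcal{F}$ to both sides gives $\mathcal{F}\big(u\big|_{r_2=r_M}\big)(k)=\sqrt{\pi/2}\;i\,\kappa(k)^{-1}e^{i\kappa(k)r_M}\mathcal{F}g(\mathbf{h}(k))$ for $k\in(-k_0,k_0)$. (Owing to the integrable singularity of $\kappa^{-1}$ at $\abs{k}=k_0$, the trace $u\big|_{r_2=r_M}$ a priori lies only in $C_0\cap L^\infty$, not in $L^1$ or $L^2$; the identity is read off from the representation above, or interpreted in the sense of tempered distributions.) It remains to expand $\mathcal{F}g$: since $\ui(\mathbf{r}')=\int_{\mathbb{S}^1}a(\mathbf{s})e^{ik_0\mathbf{r}'\cdot\mathbf{s}}\,ds$, one further application of Fubini — legitimate because $\int_{\mathbb{S}^1}\int_{\R^2}\abs{a(\mathbf{s})}\abs{f(\mathbf{r}')}\,d\mathbf{r}'\,ds=\norm{a}_{L^1(\mathbb{S}^1)}\norm{f}_{L^1(\R^2)}<\infty$ — gives $\mathcal{F}g(\mathbf{h}(k))=\int_{\mathbb{S}^1}a(\mathbf{s})\,\mathcal{F}f(\mathbf{h}(k)-k_0\mathbf{s})\,ds$, which is exactly \eqref{eqn: FDT2}.

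The routine parts are the constant bookkeeping and the repeated Fubini verifications. The one genuinely load-bearing step is the use of the separation hypothesis $r_M>r_s$ to drop the absolute value in the Weyl kernel; without it the $\abs{k}>k_0$ contribution to the $k$-integral does not converge absolutely, and neither the interchange of integrals nor the identification of $u\big|_{r_2=r_M}$ as an inverse Fourier transform is justified. The only other delicate point is the precise meaning of $\mathcal{F}\big(u\big|_{r_2=r_M}\big)$ near the circle $\abs{k}=k_0$, handled as noted above.
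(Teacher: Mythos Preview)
Your argument is correct and takes a genuinely different route from the paper. The paper treats the classical (plane-wave) Fourier diffraction theorem as a black box, obtains \eqref{eqn: FDT2} for each fixed direction $\mathbf{s}$, then integrates against $a(\mathbf{s})$ over $\mathbb{S}^1$; the bulk of its proof is a careful distributional justification that this $\mathbb{S}^1$-averaging commutes with the one-dimensional Fourier transform (which is only defined in $\mathcal{S}'(\R)$ for the traces $v_{\mathbf{s}}$). You instead derive everything in one pass from the Weyl representation of $G$, using the separation $r_M>r_s$ to make the $k$-integral absolutely convergent and thereby obtaining the trace $u\big|_{r_2=r_M}$ directly as $\sqrt{2\pi}\,\mathcal{F}^{-1}$ of an explicit $L^1(\R)$ function $\Phi$; the identity $\mathcal{F}(u\big|_{r_2=r_M})=\sqrt{2\pi}\,\Phi$ then follows immediately in $\mathcal{S}'(\R)$, and you expand $\mathcal{F}(f\ui)$ by one more Fubini. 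Your approach is more self-contained (no need to cite the classical theorem) and even yields the identity for evanescent frequencies $\abs{k}>k_0$ as a by-product, while the paper's approach is more modular and makes the linear dependence on $a$ and the superposition principle \eqref{eq:superposition} explicit. The distributional subtlety near $\abs{k}=k_0$ that you flag in your parenthetical is exactly the issue the paper spends most of its proof on; your observation that $\Phi\in L^1(\R)$ (integrable singularity of $\kappa^{-1}$ on $(-k_0,k_0)$ plus exponential decay $e^{-\sqrt{k^2-k_0^2}(r_M-r_s)}$ outside) resolves it cleanly.
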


\begin{proof}
In order to simplify notation for this proof we set $v = u(\cdot,r_M)$ and $v_{\mathbf{s}} = u_{\mathbf{s}}(\cdot,r_M)$, where $u_{\mathbf{s}}$ is the scattered wave induced by the incident plane wave $\ui_\mathbf{s}(\mathbf{r}) = e^{ik_0\mathbf{r}\cdot \mathbf{s}}$.

According to the classical Fourier diffraction theorem, see \cite{KakSla01,NatWue01,Wol69}, we have
\begin{align*}
	\mathcal{F} v_{\mathbf{s}}(k) = \sqrt{\frac{\pi}{2}}\frac{ie^{i\kappa(k) r_M}}{\kappa(k)} \mathcal{F}f({\bf h}(k)-k_0\mathbf{s})
\end{align*}
for $(k,\mathbf{s}) \in (-k_0,k_0) \times \mathbb{S}^1$. Since $\mathcal{F}f$ is bounded we can multiply with $a$ and integrate over $\mathbb{S}^1$ to obtain
\begin{align}\label{eq:fdt-avg}
	\int_{\mathbb{S}^1} a(\mathbf{s}) \mathcal{F} v_{\mathbf{s}}(k) ds = \sqrt{\frac{\pi}{2}}\frac{ie^{i\kappa(k) r_M}}{\kappa(k)} \int_{\mathbb{S}^1} a(\mathbf{s}) \mathcal{F}f({\bf h}(k)-k_0\mathbf{s}) ds
\end{align}
for $k \in (-k_0,k_0).$ It remains to show that the left-hand side is equal to that of \autoref{eqn: FDT2}. This is essentially a consequence of the superposition principle,
\begin{align}\label{eq:superposition}
	\int_{\mathbb{S}^1} a(\mathbf{s}) u_{\mathbf{s}}(\mathbf{r})ds = u(\mathbf{r}),
\end{align}
for all $\mathbf{r} \in \R^2$, which can be verified using \autoref{eqn: u}. Note, however, that we cannot directly change the order of integration on the left-hand side of \autoref{eq:fdt-avg}, because $\mathcal{F} v_{\mathbf{s}}$ is in general a distributional Fourier transform (as is $\mathcal{F} \big( u\big|_{r_2=r_M} \big) = \mathcal{F}v$ on the left-hand side of \autoref{eqn: FDT2}). Recall that the Fourier transform of a tempered distribution $w \in \mathcal{S}'(\R)$ is the unique $\mathcal{F}w \in \mathcal{S}'(\R)$ satisfying $\mathcal{F}w[\phi] = w[\mathcal{F}\phi]$ for all Schwartz functions $\phi \in \mathcal{S}(\R)$, see for instance \cite{Gra08,Gru09}.

Note that $v$ and $v_{\mathbf{s}}$ are smooth by virtue of \autoref{eqn: u} and because $G$ is smooth on $\R \setminus \{0\}$. In addition they are bounded, since $H^{(1)}_0(\norm{\mathbf{x}}) = O(\norm{\mathbf{x}}^{-1/2})$ for $\norm{\mathbf{x}} \to \infty$, see \cite[\href{https://dlmf.nist.gov/10.2.E5}{(10.2.5)}]{DLMF}. It follows that $v,v_{\mathbf{s}}$ belong to the space of tempered distributions $\mathcal{S}'(\R)$.

Now consider a test function $\phi \in C_c^\infty (\R)\subset \mathcal{S}(\R)$ with support in $(-k_0,k_0)$. Then, since $1/\kappa$ is integrable on $(-k_0,k_0)$, it follows from \autoref{eq:fdt-avg} that
\begin{align*}
	\int_{\R} \phi(k) \int_{\mathbb{S}^1} a(\mathbf{s}) \mathcal{F} v_{\mathbf{s}}(k) ds \, dk = i \sqrt{\frac{\pi}{2}} \int_{\R} \phi(k) \frac{e^{i\kappa(k) r_M}}{\kappa(k)} \int_{\mathbb{S}^1} a(\mathbf{s}) \mathcal{F}f({\bf h}(k)-k_0\mathbf{s}) ds \, dk.
\end{align*}
Applying Fubini's theorem and the distributional definition of the Fourier transform we find
\begin{align*}
	\int_{\R} \phi(k) \int_{\mathbb{S}^1} a(\mathbf{s}) \mathcal{F} v_{\mathbf{s}}(k) ds \, dk
		&= \int_{\mathbb{S}^1} a(\mathbf{s}) \int_{\R} \phi(k) \mathcal{F} v_{\mathbf{s}}(k) dk \, ds
		 = \int_{\mathbb{S}^1} a(\mathbf{s})\mathcal{F} v_{\mathbf{s}} [\phi] ds
		 = \int_{\mathbb{S}^1} a(\mathbf{s}) v_{\mathbf{s}}[\mathcal{F} \phi] ds \\
		&= \int_{\mathbb{S}^1} a(\mathbf{s}) \int_\R v_{\mathbf{s}}(x) \mathcal{F} \phi(x) dx \, ds.
	\intertext{Another application of Fubini's theorem combined with \autoref{eq:superposition} yields}
		&= \int_\R \mathcal{F} \phi(x) \int_{\mathbb{S}^1} a(\mathbf{s}) v_{\mathbf{s}}(x) ds \, dx = \int_\R \mathcal{F} \phi(x) v(x) \, dx
		 = v[\mathcal{F}\phi] = \mathcal{F}v[\phi].
\end{align*}
Summarizing, we have shown that
\begin{align*}
	\mathcal{F}v[\phi] = i \sqrt{\frac{\pi}{2}} \int_{\R} \phi(k) \frac{e^{i\kappa(k) r_M}}{\kappa(k)} \int_{\mathbb{S}^1} a(\mathbf{s}) \mathcal{F}f({\bf h}(k)-k_0\mathbf{s}) ds \, dk
\end{align*}
for every test function $\phi$ supported in $(-k_0,k_0)$. Therefore, on the interval $(-k_0,k_0)$ we can identify $\mathcal{F}v = \mathcal{F} \big( u\big|_{r_2=r_M} \big)$ with the function given by the right-hand side of \autoref{eqn: FDT2}.
\end{proof}

While \autoref{thm:FDT} is an adaption of the classical Fourier diffraction theorem, there is a decisive difference: Instead of having \textit{direct} access to $\mathcal{F}f$, now the measurements only provide information about averages of $\mathcal{F}f$ taken over circles and weighted by the beam profile.
\subsection{Rotating the incident field}\label{subsec:data_ac}
 The data presented in \autoref{eqn: FDT2} correspond to a scenario in which the object is illuminated once and the scattered field is measured at a line below the object. However, this configuration does not provide sufficient data for a complete reconstruction of the scattering potential $f$, which is a two-dimensional complex-valued function. In response to the demands of imaging applications, particularly those involving the movement of the emitting device around the object, we address this limitation by rotating the incident field, see also \autoref{subfig:focusedDT}. We introduce the rotated incident field
\begin{equation*}
	\ui_\theta(\mathbf{r})\coloneqq\ui(R_\theta\mathbf{r}) =\int_{\mathbb S^1} a(R^{\intercal}_{\theta}\mathbf{s})e^{ik_0\mathbf{r}\cdot\mathbf{s}}ds,
\end{equation*}
where the matrix
\begin{align*}
	R_{\theta}=
	\begin{pmatrix} 
		\cos\theta & -\sin\theta  \\ 
		\sin\theta & \cos\theta  \\ 
	\end{pmatrix}
\end{align*}
specifies the counter clockwise rotation through the angle $\theta\in[-\pi,\pi]$. The corresponding scattered wave  $u_\theta(\mathbf{r})$ satisfies
\begin{align*}
	(\Delta  +k_0^2)u_\theta(\mathbf{r}) = -f(\mathbf{r}) \ui_\theta(\mathbf{r}).
\end{align*}
 For each rotation of the incident field a measurement is conducted. Hereby, we assume that the detector is kept in the same position for every experiment.
 Now,  we define the full set of data in $k$-space by 
 \begin{equation}\label{eq:m}
 	m(k,\theta)\coloneqq-\sqrt{\frac{2}{\pi}}i\kappa(k) e^{-i\kappa(k) r_M}\mathcal{F} u_\theta(k,r_M)
 \end{equation}
which we assume to be given for all $k\in(-k_0,k_0)$ and all $ \theta\in[-\pi,\pi]$. 
According to \autoref{thm:FDT} they are related to the scattering potential $f$ via 
\begin{equation}\label{eq:setup}
	m(k,\theta) = \int_{\mathbb S^1}  a(R^\intercal_\theta\mathbf{s})  \mathcal Ff(\mathbf{h} (k)- k_0  \mathbf{s}) ds.
\end{equation} 
Using a parametrization  of $\mathbf{s}\in\mathbb{S}^1$ in terms of polar coordinates  
\begin{equation}\label{eq:param}
	\mathbf{s}(\varphi)=(\cos\varphi,\sin\varphi)^\intercal, \quad  \varphi\in[-\pi,\pi)
\end{equation}
turns \autoref{eq:setup} into
\begin{equation}\label{eq:setup1}
	m(k,\theta) = \int_{-\pi}^\pi a(\mathbf{s}(\varphi-\theta)) \mathcal Ff(\mathbf{h} (k)- k_0  \mathbf{s}(\varphi))d\varphi, \qquad k\in(-k_0,k_0), \ \theta\in[-\pi,\pi]
\end{equation}
which lays the groundwork for the process of determining $f$.
	\section{Inverse problem}\label{sec:invprob}
Based on our results derived up to this point we now address the \emph{inverse problem}. It consists in extracting the scattering potential $f$ from $m$ via \autoref{eq:setup1}. This relation is not direct. However, given our measurement configuration, which involves rotating the incident field, we can solve this problem through a two-step inversion process:
\begin{enumerate}
	\item[1.)] In the first step, the Fourier transform $\mathcal Ff(\mathbf{h} (k)- k_0  \mathbf{s}(\varphi))$ is to be identified for all $k\in(-k_0,k_0)$ and $\varphi\in[-\pi,\pi)$ from \autoref{eq:setup1}.
	\item[2.)] Subsequently, in the second step, the scattering potential $f$ can be extracted via Fourier inversion.
\end{enumerate}
In the following subsections, we discuss both inversion steps separately and present an explicit reconstruction formula afterward.

\subsection{Identification of the data in $k$-space} \label{sec:svd}
The aim of this section is to infer the $k$-space data $\mathcal{F}f(\mathbf{h} (k)- k_0  \mathbf{s}(\varphi))$ for all $k\in(-k_0,k_0)$ and $\varphi\in[-\pi,\pi)$ from \autoref{eq:setup1}. 
For this purpose, we use the dependency of the measurements on the rotation angles $\theta \in [-\pi,\pi]$ and consider the frequency $k\in(-k_0,k_0)$ to be fixed. Then,
\[g(k,\cdot)\coloneqq {}\mathcal{F}f(\mathbf{h} (k)- k_0  \mathbf{s}(\cdot))\in L^2\left( [-\pi,\pi]\right),\] because $\mathcal{F}f$ is smooth.

Moreover, with the Cauchy-Schwarz inequality, we obtain that also
\[m(k,\cdot)\in L^2\left( [-\pi,\pi]\right).\]

For this reason, we  introduce the following integral operator
\begin{align}\label{eq:Ka}
	\mathcal A: L^2\left( [-\pi,\pi]\right) \to L^2\left( [-\pi,\pi]\right) , \qquad \mathcal Ag(\theta) =\int_{-\pi}^{\pi}a(\varphi-\theta)g(\varphi)d\varphi,
\end{align}
whose integral kernel is defined as a translation of the beam profile $a\in L^2([-\pi,\pi])$. As in \autoref{eq:Ka} we identify $a$ with a $2\pi$-periodic function on $\R$ from now on and refrain from explicitly writing the parametrization. This operator is in general not self-adjoint since we do not impose $a(\varphi)=\overline{a(-\varphi)}$. From the fact that $a\in L^2([-\pi,\pi])$ it follows directly that $\mathcal A$ is compact, see \cite[theorem 3.4]{MueSil12}.

Accordingly, the problem presented in \autoref{eq:setup1} can be reformulated as a system of (operator) equations 
\begin{equation}\label{eq:sysop}
	\mathcal{A} g(k,\cdot) = m(k,\cdot), \qquad k\in(-k_0,k_0).
\end{equation}

Each equation leads due to the compactness of $\mathcal A$ to an \emph{ill-posed} problem.  For a comprehensive treatment of such problems, we refer to  \cite{EngHanNeu96,Kre89,MueSil12}. For simplicity of notation, we refrain from writing the dependency of the functions $g$ and $m$ on the frequency $k\in(-k_0,k_0)$ in this subsection.
 
 A foundational tool for the analysis of a non-selfadjoint compact linear operator is the theory of \emph{singular systems}. We recall from \cite[section 2.2]{EngHanNeu96} the following definition:
\begin{definition}\label{def:sing_sys}
	Let $K:\mathcal X\to \mathcal Y$ be a compact operator between two Hilbert spaces $\mathcal{X},\mathcal{Y}$ and let $K^*:\mathcal{Y}\to\mathcal{X}$ denote the adjoint of $K$. The set $\left\lbrace \left( \sigma_n;v_n,u_n\right): n\in\N_0\right\rbrace$ defines a \emph{singular system} of $K$ if 
	$\sigma_n > 0$, $\{(\sigma_n^2,v_n):\ n\in\N_0\}$ forms an eigensystem to $K^*K$ and $u_n=\frac{Kv_n}{\norm{ Kv_n}}.$ Here, the singular values can be written down in decreasing order with multiplicity, i.e. $\sigma_1	\geq \sigma_2\geq\dots>0$.
\end{definition}
To determine a singular system for $\mathcal A$ as defined in \autoref{eq:Ka}, we introduce for $g,h\in L^2([-\pi,\pi])$ the inner product
\begin{equation*}
	\langle g,h\rangle\coloneqq\frac{1}{2\pi}\int_{-\pi}^\pi g(\varphi)\overline{h(\varphi)}d\varphi,
\end{equation*}
that induces a norm $$ \norm{g}^2 = \langle g,g\rangle.$$

The adjoint operator $\mathcal A^\ast$ of $\mathcal A$ is given by 
\[
\mathcal A^*: L^2\left( [-\pi,\pi]\right) \to L^2\left( [-\pi,\pi]\right) , \qquad (\mathcal A^*m)(\varphi) = \int_{-\pi}^{\pi}\overline{a(\varphi-\theta)}m(\theta)d\theta.
\]
Note that the functions
\begin{equation*}
	e_n:[-\pi,\pi] \to \C,\qquad e_n(\varphi)\coloneqq e^{-in\varphi}, \quad n\in\Z
\end{equation*}
form a complete orthonormal system in  $L^2\left( [-\pi,\pi]\right)$ equipped with the above defined inner product. Herewith, a singular system can be characterized as follows:
\begin{lemma}\label{lem:sing_sys}
	Let $a\in L^2([-\pi,\pi])$ and
	\begin{equation}\label{eq:coeffa}
		a_n = \langle a,\overline{e_n}\rangle, \quad n\in\Z
	\end{equation}
	be its non-zero Fourier coefficients.
	Then, the set
	\begin{align}\label{eq:singularsys}
		\left\lbrace \left( 2\pi\abs{a_n};e_n,\frac{a_n}{|a_n|}e_n\right): n\in\Z\right\rbrace
	\end{align}
	forms a singular system of $\mathcal A$. 
\end{lemma}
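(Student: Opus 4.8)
The plan is to verify directly that the set in \eqref{eq:singularsys} satisfies the three defining properties of \autoref{def:sing_sys}, exploiting the fact that $\mathcal A$ is a \emph{convolution-type} operator whose kernel depends only on $\varphi - \theta$, so that the exponentials $e_n$ diagonalize it. First I would compute $\mathcal A e_n$. Plugging $g = e_n$ into \eqref{eq:Ka} and substituting $\psi = \varphi - \theta$ gives
\begin{align*}
	\mathcal A e_n(\theta) = \int_{-\pi}^{\pi} a(\varphi-\theta) e^{-in\varphi} d\varphi = e^{-in\theta}\int_{-\pi}^{\pi} a(\psi) e^{-in\psi} d\psi = 2\pi a_n e_n(\theta),
\end{align*}
where the last equality uses the definition \eqref{eq:coeffa} of $a_n = \langle a, \overline{e_n}\rangle = \frac{1}{2\pi}\int_{-\pi}^\pi a(\psi) e^{-in\psi} d\psi$ together with $2\pi$-periodicity of $a$ so that the shifted integral over a period is unchanged. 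Hence each $e_n$ is an eigenfunction of $\mathcal A$ with eigenvalue $2\pi a_n$.

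Next I would treat the adjoint. From the displayed formula for $\mathcal A^*$, an identical substitution yields $\mathcal A^* e_n(\varphi) = 2\pi \overline{a_n}\, e_n(\varphi)$. Composing, $\mathcal A^*\mathcal A e_n = 4\pi^2 |a_n|^2 e_n$, so $\{(4\pi^2|a_n|^2, e_n) : n \in \Z\}$ — equivalently $\{((2\pi|a_n|)^2, e_n)\}$ — is an eigensystem of $\mathcal A^*\mathcal A$; since the $e_n$ already form a complete orthonormal system in the given inner product, this eigensystem is complete, so nothing is missed. Restricting to the indices $n$ with $a_n \neq 0$ (as the statement does) makes the singular values $\sigma_n = 2\pi|a_n| > 0$ strictly positive, as required. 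Finally, $\mathcal A e_n = 2\pi a_n e_n$ has norm $\|\mathcal A e_n\| = 2\pi|a_n|\,\|e_n\| = 2\pi|a_n|$, so
\begin{align*}
	u_n = \frac{\mathcal A e_n}{\|\mathcal A e_n\|} = \frac{2\pi a_n e_n}{2\pi|a_n|} = \frac{a_n}{|a_n|} e_n,
\end{align*}
which matches \eqref{eq:singularsys}. This closes the verification.

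The only genuinely delicate point — and the one I would state carefully rather than wave away — is the interchange of the substitution and periodicity argument: $a \in L^2([-\pi,\pi])$ is identified with a $2\pi$-periodic function on $\R$ (as the text announces after \eqref{eq:Ka}), and the integrand $a(\varphi-\theta)e^{-in\varphi}$ is $2\pi$-periodic in $\varphi$, so $\int_{-\pi}^{\pi}$ may be replaced by $\int_{\theta-\pi}^{\theta+\pi}$ and then, by periodicity, back to $\int_{-\pi}^{\pi}$ after the shift; Fubini is not needed since everything is a single one-dimensional absolutely convergent integral (the Cauchy–Schwarz bound $|a_n| \le \|a\|_{L^2}$ guarantees convergence). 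A secondary bookkeeping remark is that $\sigma_n = 2\pi|a_n|$ need not be listed in decreasing order as written with index $n \in \Z$; one should note that a reindexing by decreasing magnitude puts the system into the form demanded by \autoref{def:sing_sys}, and that $a_n \to 0$ as $|n|\to\infty$ (Riemann–Lebesgue / Bessel) ensures only finitely many $\sigma_n$ exceed any positive threshold, consistent with compactness of $\mathcal A$.
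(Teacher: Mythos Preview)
Your proof is correct and follows essentially the same route as the paper: compute $\mathcal A e_n = 2\pi a_n e_n$, deduce $\mathcal A^*\mathcal A e_n = 4\pi^2|a_n|^2 e_n$, and read off $u_n = \mathcal A e_n / \|\mathcal A e_n\|$. The only cosmetic difference is that the paper obtains $\mathcal A e_n$ by first expanding the kernel $a(\varphi-\theta)$ into its Fourier series and then using orthonormality of the $e_l$, whereas you use the change of variables $\psi = \varphi-\theta$ together with $2\pi$-periodicity; both are standard one-line computations, and your extra remarks on periodicity and reindexing are welcome but not strictly needed.
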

\begin{proof}
First, we expand the integral kernel as a Fourier series 
\begin{equation}\label{eq:a_fourierseries}
	a(\varphi-\theta)=\sum_{n\in\Z} a_n \overline{e_n(\varphi-\theta)} =
	\sum_{n\in\mathcal \Z} a_n \overline{e_n(\varphi)}e_n(\theta)
\end{equation} 
for almost every $\varphi\in[-\pi,\pi)$. Then, by exploiting \autoref{eq:a_fourierseries}, we calculate
\begin{align*}
	\left(\mathcal A e_n\right)(\theta) = \int_{-\pi}^\pi a(\varphi-\theta)e_n(\varphi)d\varphi
	= \sum_{l\in\Z}a_l\int_{-\pi}^\pi e_n(\varphi)\overline{e_l(\varphi)}d\varphi e_l(\theta) 
	= 2\pi\sum_{l\in\Z}a_le_l(\theta)\delta_{nl}= 2\pi a_n e_n(\theta),
\end{align*}
and, therefore, together with \autoref{eq:a_fourierseries}, we get
\begin{align*}
	\left(\mathcal A^*\mathcal Ae_n\right)(\varphi) 
	&= 2\pi a_n \int_{-\pi}^\pi \overline{a(\varphi-\theta)}e_n(\theta)d\theta
	= 2\pi a_n\sum_{l\in\Z} \overline{a_l} \int_{-\pi}^\pi e_n(\theta)\overline{e_l(\theta)}d\theta e_l(\varphi)
	\\& = 	4\pi^2 a_n\sum_{l\in\Z} \overline{a_l} e_l(\varphi)\delta_{nl}= 4\pi^2\abs{a_n}^2 e_n(\varphi)
\end{align*}
for all $n\in\Z$. Hence, $4\pi^2 \abs{a_n}^2>0, \ n\in\Z$ are eigenvalues of $\mathcal A^*\mathcal A$. Further, we have
\[
 \norm{\mathcal Ae_n} = 2\pi \abs{a_n} \norm{e_n}= 2\pi \abs{a_n}
\]
so that in total \autoref{eq:singularsys} forms a singular system of $\mathcal A$ according to \autoref{def:sing_sys}.
\end{proof}
Using this singular system, the operator $\mathcal A$ can be diagonalized as 
\begin{equation}\label{eq:diag}
	\mathcal Ag = 2\pi\sum_{n\in\Z}a_n\langle g ,e_n\rangle e_n,\qquad g\in L^2([-\pi,\pi]).
\end{equation}
According to \cite[theorem 2.8]{EngHanNeu96}, the singular system provides a series representation for the \emph{best-approximate solution} of $\mathcal{A}g = m$, which is defined as the unique least-squares solution of minimal norm, that is, 
\[ \mathcal{A}^\dagger m  \coloneqq \argmin \left\{ \norm{g} \colon g\in L^2([-\pi,\pi]), \norm{\mathcal{A} g - m} = \inf_{h} \norm{\mathcal{A}h  - m} \right\}. \]
 \begin{theorem}\label{thm:invert_int}
 	Suppose that $m \in L^2([-\pi,\pi])$ satisfies the Picard condition
  	\begin{equation}\label{eq:picard}
	\sum_{n\in\Z}\left( \frac{\abs{\langle m,e_n\rangle}}{ \abs{a_n}}\right)^2<\infty.
	\end{equation}
	Then, 
 	\begin{equation}\label{eq:fourier_direct}
 		\mathcal{A}^\dagger m = \frac{1}{2\pi}\sum_{n\in\Z}\frac{\langle m,e_n\rangle}{a_n}e_n.
 	\end{equation}
 \end{theorem}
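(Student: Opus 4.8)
The plan is to derive \eqref{eq:fourier_direct} directly from the singular system of $\mathcal{A}$ obtained in \autoref{lem:sing_sys} together with the abstract Picard theorem \cite[theorem 2.8]{EngHanNeu96}. That theorem states that, for a compact operator $K$ with singular system $\{(\sigma_n;v_n,u_n)\}$, one has $m\in\mathcal{D}(K^\dagger)$ if and only if $\sum_n\sigma_n^{-2}\abs{\langle m,u_n\rangle}^2<\infty$, and in that case $K^\dagger m=\sum_n\sigma_n^{-1}\langle m,u_n\rangle v_n$. So the first step is simply to substitute $\sigma_n=2\pi\abs{a_n}$, $v_n=e_n$, $u_n=\tfrac{a_n}{\abs{a_n}}e_n$ from \eqref{eq:singularsys}.

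The second step is bookkeeping. Because the inner product $\langle\cdot,\cdot\rangle$ is conjugate-linear in its second argument, $\langle m,u_n\rangle=\tfrac{\overline{a_n}}{\abs{a_n}}\langle m,e_n\rangle$, hence $\sigma_n^{-2}\abs{\langle m,u_n\rangle}^2=\tfrac{1}{4\pi^2\abs{a_n}^2}\abs{\langle m,e_n\rangle}^2$; up to the constant $4\pi^2$ this is precisely the summand in the Picard condition \eqref{eq:picard}, so the hypothesis of the theorem guarantees $m\in\mathcal{D}(\mathcal{A}^\dagger)$. For the series itself, $\sigma_n^{-1}\langle m,u_n\rangle v_n=\tfrac{\overline{a_n}}{2\pi\abs{a_n}^2}\langle m,e_n\rangle e_n=\tfrac{1}{2\pi a_n}\langle m,e_n\rangle e_n$, using $\abs{a_n}^2=a_n\overline{a_n}$; summing over $n$ yields exactly \eqref{eq:fourier_direct}. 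Throughout, all sums are understood to range over the index set $\{n\in\Z:a_n\neq 0\}$ of actual singular values, which is implicit in the hypothesis of \autoref{lem:sing_sys} that the $a_n$ in \eqref{eq:coeffa} are the \emph{non-zero} Fourier coefficients; this is also what makes the division by $a_n$ in \eqref{eq:fourier_direct} meaningful.

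If one prefers a self-contained argument not invoking the abstract theorem, I would instead use the diagonalization \eqref{eq:diag}. From it one reads off $\overline{\mathcal{R}(\mathcal{A})}=\overline{\spa\{e_n:a_n\neq 0\}}$, whose associated orthogonal projection sends $m$ to $\sum_{a_n\neq 0}\langle m,e_n\rangle e_n$. A function $g$ is a least-squares solution of $\mathcal{A}g=m$ iff $\mathcal{A}g$ equals this projection, i.e.\ iff $2\pi a_n\langle g,e_n\rangle=\langle m,e_n\rangle$ for every $n$ with $a_n\neq 0$; the minimal-norm such $g$ is the one with $\langle g,e_n\rangle=0$ whenever $a_n=0$. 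Expanding $g$ in the orthonormal system $\{e_n\}$ gives \eqref{eq:fourier_direct}, and Parseval gives $\norm{g}^2=(4\pi^2)^{-1}\sum_{a_n\neq 0}\abs{\langle m,e_n\rangle}^2/\abs{a_n}^2$, so that $g$ is a genuine element of $L^2([-\pi,\pi])$ — equivalently $m\in\mathcal{D}(\mathcal{A}^\dagger)$ — exactly when \eqref{eq:picard} holds.

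There is no real obstacle here; the statement is just the classical singular value expansion of the Moore--Penrose inverse specialized to $\mathcal{A}$. The only points needing a little care are keeping track of the complex conjugate in the inner product so that $\overline{a_n}/\abs{a_n}^2$ collapses to $1/a_n$, and restricting all summation to indices with $a_n\neq 0$.
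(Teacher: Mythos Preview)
Your proposal is correct and matches the paper's approach exactly: the paper does not give an explicit proof of this theorem but simply appeals to \cite[theorem 2.8]{EngHanNeu96} applied with the singular system from \autoref{lem:sing_sys}, which is precisely your first step. Your bookkeeping (the collapse $\overline{a_n}/\abs{a_n}^2=1/a_n$ and the restriction to $a_n\neq 0$) is accurate and fills in the details the paper leaves implicit; the alternative self-contained argument via \eqref{eq:diag} is a welcome bonus but not needed for parity with the paper.
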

This result demonstrates the ill-posed nature of the problem which is characterized by the fact that the singular values $|a_n|$ tend to zero: The condition \autoref{eq:picard}, which ensures the existence of a least-squares solution, is satisfied if the coefficients $\langle m,e_n\rangle$ decay fast enough relative to the values $a_n$.  Moreover, in the presence of perturbations in the input data $m$, we can deduce from \autoref{eq:fourier_direct} that the instability of the inverse problem is also characterized by the decay of the coefficients $a_n$. This is why the choice of the integral kernel $a\in L^2([-\pi,\pi])$  can influence the \emph{degree of ill-posedness}. For further details see \cite[sect.\ 2.2]{EngHanNeu96}, \cite[sect.\ 15.4]{Kre89} or \cite[sect.\ 3.3]{MueSil12}.

However, it is straightforward to apply a regularization scheme that filters out the data associated with small singular values. To achieve this, we consider the \emph{truncated singular value decomposition}  (TSVD) \cite[section 4.1]{EngHanNeu96}
	\begin{equation}\label{eq:tuncsvd}
	\mathcal{A}^\dagger_Nm\coloneqq \frac{1}{2\pi}\sum_{n=-N}^{N}\frac{\langle m,e_n\rangle}{a_n}e_n, \qquad N\in\N
	\end{equation}
for some $N\in\N$.  Here, the truncation level $N$ acts as a regularization parameter, that should be chosen such a way that all noise-dominated singular value coefficients are neglected. In the numerical part of this article, see \autoref{subsec:testsvd}, we specify this parameter by inspecting the discrete Picard criterion as it is proposed and studied in \cite{Han10}. As an alternative regularization scheme it is also straightforward to apply Tikhonov-type regularization which corresponds to a multiplication of the summands in \autoref{eq:fourier_direct} with some damping coefficients. Anyhow, we restrict ourselves to the regularization via \autoref{eq:tuncsvd} and refer to \cite{EngHanNeu96, Kre98} for a detailed discussion of the regularization strategies.

\subsection{Frequency coverage and Fourier inversion}\label{subsec:bp}
According to the analysis of the integral operator $\mathcal{A}$, the $k$-space data $\mathcal{F}f(\mathbf{h}(k)-k_0\mathbf{s}(\varphi))$ are now approximated by ($\mathcal{A}^\dagger_Nm)(k,\varphi)$ for all $k\in(-k_0,k_0)$ and all $\varphi\in[-\pi,\pi).$
Based on this, we intend to finally quantify the target function $f$ via Fourier inversion. In doing so, we introduce the set
\begin{equation*}
	\mathcal{U} \coloneqq \left\lbrace (k,\varphi)\in\R^2: k\in(-k_0,k_0), \ \varphi\in[-\pi,\pi)\right\rbrace
\end{equation*}
and define the map
\begin{equation}\label{eq:T}
	T:\mathcal{U} \to \R^2,\qquad T(k,\varphi)\coloneqq \mathbf{h}(k)-k_0\mathbf{s}(\varphi) = \begin{pmatrix}
		k-k_0\cos\varphi\\
		\kappa(k)-k_0\sin\varphi
	\end{pmatrix}
\end{equation}
that traces out the domain in $k$-space that is covered by the measurements. Under the assumption that $\mathcal{F}f(T(k,\varphi))$ can be determined exactly from $m$ for all $(k,\varphi)\in\mathcal{U}$, we have access to $\mathcal{F}f$ on the domain
\begin{equation}\label{eq:frequency_coverage}
	T(\mathcal{U})=\left\lbrace
	\begin{pmatrix}
		k-k_0\cos\varphi\\
		\kappa(k)-k_0\sin\varphi
	\end{pmatrix}\in\R^2: \  k\in(-k_0,k_0), \ \varphi\in[-\pi,\pi)\right\rbrace\subset \R^2
\end{equation} 
in $k$-space which is depicted in \autoref{fig:frequency_coverage}. Every plane wave angle $\varphi\in[-\pi,\pi)$ provides data about $\mathcal{F}f$ along a semicircle with radius $k_0$. For $\varphi\in[-\pi,0]$ these semicircles fill the half disk of radius $2k_0$ in the upper half plane except for the disks of radius $k_0$ around $(\pm k_0,0)^\intercal$ that are captured by $\varphi\in(0,\pi)$. In total, a frequency coverage of the form depicted in \autoref{subfig:fullcov} is obtained. Note that this coverage is bounded by 
\[
 \norm{T(k,\varphi)} \leq 2k_0
\]
such that the determination of $f$ via Fourier inversion using the available frequencies on $T(\mathcal{U})\subset \R^2$ in $k$-space provides us a \emph{low-pass filtered} approximation of the scattering potential
\begin{equation}\label{eq:f_bp}
	f(\mathbf{r})\approx \frac{1}{2\pi}\int_{T(\mathcal{U})}\mathcal{F}f(\mathbf{y})e^{i\mathbf{y}\cdot\mathbf{r}}d\mathbf{y}.
\end{equation}	
To calculate this formula explicitly we first need to perform the change of variables $\mathbf{y} \mapsto (k,\varphi)$ in the integral in order to subsequently insert the $k$-space data  approximated by $\mathcal{A}^\dagger_Nm$. The map $T:\mathcal{U}\to\R^2$, however, is not injective, as is shown in \autoref{lem:Card} below.  We need to take into account the number of times a point $\mathbf y\in T(\mathcal{U})$ in the frequency coverage is captured by $T$. Therefore, we introduce the \emph{Banach indicatrix} \cite{Haj93} $\Card(T^{-1}(\mathbf{y}))$ of $T$, where $\Card$ is the counting measure.
\begin{figure}[t]
	\centering
	\begin{subfigure}[b]{0.3\textwidth}
		\centering
		\begin{tikzpicture}[scale=1]
			\fill[blue,opacity=0.2] (2,0) arc (0:180:2cm);
			\fill[white] (2,0) arc (0:180:1cm);
			\fill[white] (0,0) arc (0:180:1cm);
			
			\draw[blue] (0,0) arc (0:180:1cm);
			\draw[blue] (2,0) arc (0:180:1cm);
			\draw[blue] (1,1) arc (0:180:1cm);
			\draw[blue,dashed] (1,0) arc (0:180:1cm);
			
			\draw[->] (0,-1.2) -- (0,3) node[anchor = west]{$y_2$};
			\draw[->] (-2.5,0) -- (2.5,0) node[anchor = north]{$y_1$};
			
			\draw (1,-0.08) -- (1,0.08);
			\node[anchor = south] at (1,0){$k_0$};
			\draw (-1,-0.08) -- (-1,0.08);
			\node[anchor = south] at (-1,0){$-k_0$};
			\draw (-0.08,2) -- (0.08,2);
			\node[anchor = west] at (0,2.15){$2k_0$};
		\end{tikzpicture}
	\caption{$\mathcal{I}=[-\pi,0]$}
	\label{subfig:upperfq}
	\end{subfigure}\hfill
\begin{subfigure}[b]{0.3\textwidth}
	\centering
	\begin{tikzpicture}[scale=1]
		\fill[blue,opacity=0.2] (2,0) arc (0:360:1cm);
		\fill[blue,opacity=0.2] (0,0) arc (0:360:1cm);
		
		\draw[blue] (0,0) arc (0:180:1cm);
		\draw[blue] (2,0) arc (0:180:1cm);
		\draw[blue] (1,-1) arc (0:180:1cm);
		\draw[blue,dashed] (-1,0) arc (180:360:1cm);
		
		\draw[->] (0,-1.2) -- (0,3) node[anchor = west]{$y_2$};
		\draw[->] (-2.5,0) -- (2.5,0) node[anchor = north]{$y_1$};
		
		\draw (1,-0.08) -- (1,0.08);
		\node[anchor = south] at (1,0){$k_0$};
		\draw (-1,-0.08) -- (-1,0.08);
		\node[anchor = south] at (-1,0){$-k_0$};
		\draw (-0.08,2) -- (0.08,2);
		\node[anchor = west] at (0,2.15){$2k_0$};
	\end{tikzpicture}
\caption{$\mathcal{I}=[0,\pi]$}
\label{subfig:twocircles}
\end{subfigure}\hfill
\begin{subfigure}[b]{0.3\textwidth}
	\centering
	\begin{tikzpicture}[scale=1]
		\fill[blue,opacity=0.2] (2,0) arc (0:180:2cm);
		\fill[blue,opacity=0.2] (2,0) arc (0:-180:1cm);
		\fill[blue,opacity=0.2] (0,0) arc (0:-180:1cm);
		
		\draw[blue] (0,0) arc (0:180:1cm);
		\draw[blue] (2,0) arc (0:180:1cm);
		\draw[blue] (1,-1) arc (0:180:1cm);
		\draw[blue] (1,1) arc (0:180:1cm);
		\draw[blue,dashed] (-1,0) arc (-180:180:1cm);
		
		\draw[->] (0,-1.2) -- (0,3) node[anchor = west]{$y_2$};
		\draw[->] (-2.5,0) -- (2.5,0) node[anchor = north]{$y_1$};
		
		\draw (1,-0.08) -- (1,0.08);
		\node[anchor = south] at (1,0){$k_0$};
		\draw (-1,-0.08) -- (-1,0.08);
		\node[anchor = south] at (-1,0){$-k_0$};
		\draw (-0.08,2) -- (0.08,2);
		\node[anchor = west] at (0,2.15){$2k_0$};
	\end{tikzpicture}
\caption{$\mathcal{I}=[-\pi,\pi]$}
\label{subfig:fullcov}
\end{subfigure}
	\caption{\textbf{Construction of the 2D frequency coverage.} The coverage $T((-k_0,k_0)\times \mathcal{I})$ for different sub-intervals $\mathcal{I}\subseteq[-\pi,\pi]$ is illustrated. The resulting shadowed coverages are built from a union of infinitely many semicircles of radius $k_0$ whose centres lie on the dashed line $-k_0\mathbf{s}(\varphi)$, $\varphi\in\mathcal{I}$. Some of these semicircles are depicted in blue. (c) shows the full coverage $T(\mathcal{U})$ as the union of (a) and (b).}
	\label{fig:frequency_coverage}
\end{figure}
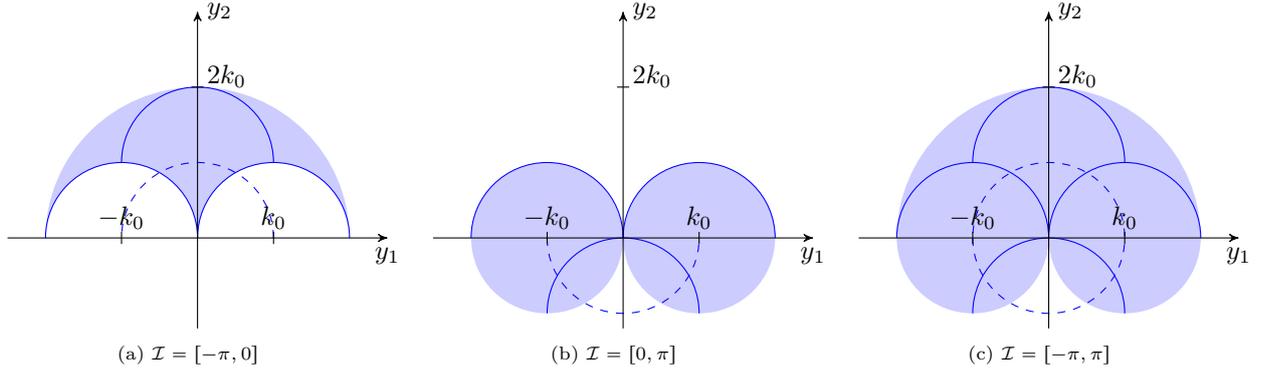

\begin{theorem}\label{thm:backprop}
	 Let $T$ be defined as in \autoref{eq:T}. Then, the integral in \autoref{eq:f_bp} can be determined via
	\begin{equation}\label{eq:fbp}
		\begin{split}
			\int_{T(\mathcal{U})}\mathcal{F}f(\mathbf{y})e^{i\mathbf{y}\cdot\mathbf{r}}d\mathbf{y} = \int_{\mathcal{U}} \mathcal Ff(T(k,\varphi)) e^{i T(k,\varphi)\cdot\mathbf{r}}\frac{\abs{\det(\nabla T(k,\varphi))} }{\operatorname{Card}(T^{-1}(T(k,\varphi)))}d(k,\varphi), \quad \mathbf{r}\in\R^2,
		\end{split}
	\end{equation}
	where 
	\begin{equation}\label{eq:jacobian}
		\det(\nabla T(k,\varphi)) = k_0\left( \frac{k}{\kappa(k)}\sin\varphi-\cos\varphi\right), \quad (k,\varphi)\in\mathcal{U}
	\end{equation} 
	is the Jacobian determinant of $T$.
\end{theorem}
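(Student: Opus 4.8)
The plan is to recognize \eqref{eq:fbp} as an instance of the \emph{area formula} --- the change-of-variables theorem for (possibly non-injective) $C^1$ maps between open subsets of $\R^2$, which produces exactly the multiplicity factor $\Card(T^{-1}(\cdot))$; see \cite{Haj93}. Explicitly, the area formula asserts that for the $C^1$ map $T$ on $\mathcal{U}$ and any Borel function $\psi\colon\R^2\to\C$ that is either nonnegative or integrable against $\Card(T^{-1}(\cdot))\,d\mathbf{y}$,
\[
\int_{\mathcal{U}}\psi\big(T(k,\varphi)\big)\,\abs{\det(\nabla T(k,\varphi))}\,d(k,\varphi)=\int_{\R^2}\psi(\mathbf{y})\,\Card\big(T^{-1}(\mathbf{y})\big)\,d\mathbf{y},
\]
where $\Card(T^{-1}(\mathbf{y}))$ counts the preimages of $\mathbf{y}$ within $\mathcal{U}$. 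The first task is the Jacobian: since $\kappa(k)=\sqrt{k_0^2-k^2}$ is smooth on $(-k_0,k_0)$ with $\kappa'(k)=-k/\kappa(k)$, differentiating $T(k,\varphi)=(k-k_0\cos\varphi,\ \kappa(k)-k_0\sin\varphi)^\intercal$ gives
\[
\nabla T(k,\varphi)=\begin{pmatrix}1 & k_0\sin\varphi\\ -k/\kappa(k) & -k_0\cos\varphi\end{pmatrix},
\]
whose determinant is $-k_0\cos\varphi+\tfrac{k_0 k}{\kappa(k)}\sin\varphi=k_0\big(\tfrac{k}{\kappa(k)}\sin\varphi-\cos\varphi\big)$, i.e.\ \eqref{eq:jacobian}; in particular $T$ extends to a $C^\infty$ map on the open strip $(-k_0,k_0)\times\R\supseteq\mathcal{U}$.

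Next I would dispatch the measure-theoretic preliminaries that license inserting a $1/\Card(T^{-1}(\cdot))$ factor. As $\mathcal{U}=(-k_0,k_0)\times[-\pi,\pi)$ is $\sigma$-compact and $T$ is continuous, $T(\mathcal{U})$ is $\sigma$-compact, hence Borel; moreover $\norm{T(k,\varphi)}\le\norm{\mathbf{h}(k)}+k_0=2k_0$, so $T(\mathcal{U})$ is bounded. Since $f\in L^2(\R^2)$ is compactly supported, $\mathcal{F}f$ is bounded (as already used in the proof of \autoref{thm:FDT}), whence $\psi_0(\mathbf{y})\coloneqq\mathcal{F}f(\mathbf{y})e^{i\mathbf{y}\cdot\mathbf{r}}$ is bounded on $T(\mathcal{U})$. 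The only non-routine point is that $\det(\nabla T)$ blows up like $1/\kappa(k)$ as $k\to\pm k_0$; but $1/\kappa$ is integrable on $(-k_0,k_0)$ --- again a fact already exploited in the proof of \autoref{thm:FDT} --- so $\int_{\mathcal{U}}\abs{\det(\nabla T)}\,d(k,\varphi)<\infty$. Feeding $\psi\equiv\mathbf{1}_{T(\mathcal{U})}$ into the area formula then yields $\int_{\R^2}\Card(T^{-1}(\mathbf{y}))\,d\mathbf{y}=\int_{\mathcal{U}}\abs{\det(\nabla T)}\,d(k,\varphi)<\infty$, so $\Card(T^{-1}(\mathbf{y}))$ is finite for a.e.\ $\mathbf{y}$; being $\ge1$ on $T(\mathcal{U})$ and $=0$ off it, the function $1/\Card(T^{-1}(\cdot))$ is well-defined and lies in $(0,1]$ for a.e.\ $\mathbf{y}\in T(\mathcal{U})$. (That the exceptional set of $(k,\varphi)$ with $\Card(T^{-1}(T(k,\varphi)))=\infty$ is also null follows from the same identity together with $\det(\nabla T)\neq0$ outside a curve.)

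With all this in hand the remaining computation is short: apply the area formula to $\psi(\mathbf{y})\coloneqq\psi_0(\mathbf{y})/\Card(T^{-1}(\mathbf{y}))$ on $T(\mathcal{U})$, extended by $0$ elsewhere, which is bounded and hence --- since $\int_{\R^2}\Card(T^{-1}(\mathbf{y}))\,d\mathbf{y}<\infty$ --- integrable against $\Card(T^{-1}(\cdot))\,d\mathbf{y}$ (splitting into real and imaginary, positive and negative parts reduces everything to the standard real-valued statement). Its right-hand side collapses to $\int_{T(\mathcal{U})}\psi_0(\mathbf{y})\,d\mathbf{y}$, because $\psi(\mathbf{y})\,\Card(T^{-1}(\mathbf{y}))=\psi_0(\mathbf{y})$ on $T(\mathcal{U})$ and both sides vanish off $T(\mathcal{U})$, while its left-hand side is precisely the right-hand side of \eqref{eq:fbp}. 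The main obstacle is therefore not the algebra but this integrability and a.e.-finiteness bookkeeping near $k=\pm k_0$: one must secure $\det(\nabla T)\in L^1(\mathcal{U})$ so that $\Card(T^{-1}(\cdot))$ is a.e.\ finite and the division by it, as well as the complex-valued form of the area formula, are fully justified.
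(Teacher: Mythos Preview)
Your proof is correct and follows essentially the same approach as the paper: both rest on the area formula (change of variables for non-injective $C^1$ maps) with the Banach indicatrix, and the Jacobian computation is identical. The only minor difference is that the paper establishes a.e.-finiteness of $\Card(T^{-1}(\cdot))$ by forward reference to the explicit computation in \autoref{lem:Card}, whereas you derive it abstractly from the integrability of $\abs{\det(\nabla T)}$ on $\mathcal{U}$ via the area formula applied to $\psi\equiv\mathbf{1}_{T(\mathcal{U})}$; your route is more self-contained for the present theorem, while the paper's route is not wasted work since the explicit values of $\Card$ are needed anyway for the reconstruction formula \eqref{eq:approxfbp}.
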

\begin{proof}
	The map $T$ is differentiable and it is straightforward to calculate the Jacobian matrix
	\begin{align*}
		\nabla T(k,\varphi) = \begin{pmatrix}
			1& k_0\sin\varphi\\
			-\frac{k}{\kappa(k)} & -k_0\cos\varphi
		\end{pmatrix}
	\end{align*}
	and hence the determinant \eqref{eq:jacobian}.
	As it is shown below in \autoref{lem:Card} the Banach indicatrix $\Card(T^{-1}(\cdot))$ is finite almost everywhere and vanishes nowhere in $T(\mathcal{U})$. For this reason,
 	we can write
	\[
	\int_{T(\mathcal{U})}\mathcal{F}f(\mathbf{y})e^{i\mathbf{y}\cdot\mathbf{r}}d\mathbf{y}=\int_{T(\mathcal{U})}\mathcal{F}f(\mathbf{y})e^{i\mathbf{y}\cdot\mathbf{r}}\frac{\operatorname{Card}(T^{-1}(\mathbf{y}))}{\operatorname{Card}(T^{-1}(\mathbf{y}))}d\mathbf{y}.
	\]
	Then, applying a change of variables according to \cite[theorem 5.8.30]{Bog07} for the integrand
	\[\mathbf{y}\mapsto \mathcal{F}f(\mathbf{y})e^{i\mathbf{y}\cdot\mathbf{r}}/\operatorname{Card}(T^{-1}(\mathbf{y}))\]
	gives 
	\begin{equation*}
			\int_{T(\mathcal{U})}\mathcal{F}f(\mathbf{y})e^{i\mathbf{y}\cdot\mathbf{r}}d\mathbf{y} = \int_{\mathcal{U}}\mathcal{F}f(T(k,\varphi))e^{iT(k,\varphi)\cdot\mathbf{r}}\frac{\big\vert \det(\nabla T(k,\varphi))\big\vert }{\operatorname{Card}(T^{-1}(T(k,\varphi)))}d(k,\varphi).
	\end{equation*}
\end{proof}
Finally, using this integral transform in \autoref{eq:f_bp} together with the data in $k$-space we obtain for $\mathbf{r}\in\R^2$ the \textit{backpropagated} solution
	\begin{equation}\label{eq:approxfbp}
			\boxed{f(\mathbf{r}) \approx f^{\text{bp}}_N(\mathbf{r})\coloneqq \frac{1}{2\pi}\int_{\mathcal{U}}(\mathcal{A}^\dagger_Nm)(k,\varphi)e^{i T(k,\varphi)\cdot\mathbf{r}}\times\frac{\abs{\det(\nabla T(k,\varphi))} }{\operatorname{Card}(T^{-1}(T(k,\varphi)))}d(k,\varphi)}
	\end{equation}
	 of our reconstruction problem, where the Jacobian determinant is given via \autoref{eq:jacobian}. Hence, in the context of \autoref{eq:approxfbp}, the term \textit{backpropagation} refers to the computation of the low-pass filtered Fourier inversion using the available Fourier data $\mathcal{A}^\dagger_Nm$, which must first be extracted from the original data $m$ using TSVD. To actually calculate \autoref{eq:approxfbp} it remains to determine the Banach indicatrix $\Card(T^{-1}(\mathbf{y}))$ for $\mathbf{y}\in T(\mathcal{U})$.
	 
\begin{lemma}\label{lem:Card} 
For almost every $(k,\varphi) \in \mathcal{U}$
\begin{align*}
	\Card \left( T^{-1}(T(k,\varphi)) \right) =
	\begin{cases}
		2,	& -\pi \leq \varphi < 0,\\
		1,	& 0 \le \varphi < \pi.
	\end{cases}
\end{align*}
\end{lemma}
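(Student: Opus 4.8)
The idea is to count, for a fixed generic $(k,\varphi)\in\mathcal{U}$, how many pairs $(k',\varphi')\in\mathcal{U}$ satisfy $T(k',\varphi')=T(k,\varphi)$, working with the geometry behind \autoref{eq:T}. Write $\mathbf{y}:=T(k,\varphi)=\mathbf{h}(k)-k_0\mathbf{s}(\varphi)$ and recall that $k\mapsto\mathbf{h}(k)$ parametrizes the \emph{open upper} semicircle $\{\mathbf{x}:\|\mathbf{x}\|=k_0,\ x_2>0\}$ bijectively, while $\varphi\mapsto k_0\mathbf{s}(\varphi)$ parametrizes the \emph{whole} circle of radius $k_0$ bijectively.

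\emph{Step 1: reduce to a scalar equation in $\varphi'$.} If $T(k',\varphi')=\mathbf{y}$ then $\mathbf{h}(k')=\mathbf{y}+k_0\mathbf{s}(\varphi')$; taking squared norms and using $\|\mathbf{h}(k')\|=\|k_0\mathbf{s}(\varphi')\|=k_0$ yields
\[
	\mathbf{y}\cdot\mathbf{s}(\varphi')=-\frac{\|\mathbf{y}\|^2}{2k_0}.
\]
Conversely, for any $\varphi'$ solving this the vector $\mathbf{y}+k_0\mathbf{s}(\varphi')$ automatically has norm $k_0$, and if in addition its second component is positive it equals $\mathbf{h}(k')$ for a \emph{unique} $k'\in(-k_0,k_0)$ (a vector of norm $k_0$ with positive second coordinate has first coordinate in $(-k_0,k_0)$), producing a preimage $(k',\varphi')\in\mathcal{U}$. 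Using $\|\mathbf{y}\|^2=2k_0^2-2k_0\,\mathbf{h}(k)\cdot\mathbf{s}(\varphi)$ one checks that $\|\mathbf{y}\|\in(0,2k_0)$ away from the set $\{\mathbf{h}(k)=\pm k_0\mathbf{s}(\varphi)\}$, so the equation — being of the form $\mathbf{y}\cdot\mathbf{s}(\varphi')=c$ with $|c|=\|\mathbf{y}\|^2/(2k_0)<\|\mathbf{y}\|$ — has exactly two solutions $\varphi'\in[-\pi,\pi)$. Hence there are at most two preimages.

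\emph{Step 2: identify the two solutions.} One solution is $\varphi'=\varphi$, since $\mathbf{y}\cdot\mathbf{s}(\varphi)=\mathbf{h}(k)\cdot\mathbf{s}(\varphi)-k_0=-\|\mathbf{y}\|^2/(2k_0)$; here $\mathbf{h}(k')=\mathbf{y}+k_0\mathbf{s}(\varphi)=\mathbf{h}(k)$ has positive second component, recovering the preimage $(k,\varphi)$ itself. The other solution is the $\varphi'$ with $k_0\mathbf{s}(\varphi')=-\mathbf{h}(k)$: a one-line computation, $\mathbf{y}\cdot\mathbf{s}(\varphi')=-\mathbf{y}\cdot\mathbf{h}(k)/k_0=-(k_0^2-k_0\mathbf{s}(\varphi)\cdot\mathbf{h}(k))/k_0=-\|\mathbf{y}\|^2/(2k_0)$, shows it solves the scalar equation, and it differs from $\varphi$ except on the set $\{\mathbf{h}(k)=-k_0\mathbf{s}(\varphi)\}$, so it is the second root. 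For it, $\mathbf{h}(k')=\mathbf{y}+k_0\mathbf{s}(\varphi')=\mathbf{y}-\mathbf{h}(k)=-k_0\mathbf{s}(\varphi)$, whose second component equals $-k_0\sin\varphi$. Thus the second preimage lies in $\mathcal{U}$ precisely when $\sin\varphi<0$, i.e.\ $\varphi\in(-\pi,0)$, and it is then distinct from $(k,\varphi)$. Combining, $\Card(T^{-1}(T(k,\varphi)))=2$ for $\varphi\in(-\pi,0)$ and $=1$ for $\varphi\in(0,\pi)$.

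\emph{Main obstacle.} The algebra here is short; the real work is the almost-everywhere bookkeeping. I would collect the exceptional configurations — $\|\mathbf{y}\|=0$ or $\|\mathbf{y}\|=2k_0$ (equivalently $\mathbf{h}(k)=\pm k_0\mathbf{s}(\varphi)$), $\sin\varphi=0$, the coincidence of the two roots, and the borderline case $k'=\pm k_0$ — observe that each is the zero set of a nontrivial real-analytic function of $(k,\varphi)$ and hence Lebesgue-null in $\mathcal{U}$, and conclude that the stated piecewise value holds for almost every $(k,\varphi)\in\mathcal{U}$.
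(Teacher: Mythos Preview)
Your argument is correct and takes a genuinely different route from the paper's proof. The paper works in the image: given $\mathbf{y}\in T(\mathcal{U})$, it expands the unknowns $\mathbf{a}=\mathbf{h}(k')$ and $\mathbf{b}=k_0\mathbf{s}(\varphi')$ in the orthonormal frame $(\mathbf{y}/\|\mathbf{y}\|,\mathbf{y}^\perp/\|\mathbf{y}\|)$, solves the resulting algebraic system for explicit candidates $\mathbf{a}^\pm,\mathbf{b}^\pm$, checks the constraint $a_2^\pm>0$ to obtain two subsets $A_1,A_2\subset T(\mathcal{U})$, and then has to identify $A_1\cap A_2$ with $T((-k_0,k_0)\times(-\pi,0))$ up to a null set --- a step asserted by inspection of the pictures. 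You instead stay in the domain and exploit the swap symmetry $(\mathbf{h}(k'),k_0\mathbf{s}(\varphi'))\leftrightarrow(-k_0\mathbf{s}(\varphi),-\mathbf{h}(k))$ of the equation $\mathbf{a}-\mathbf{b}=\mathbf{y}$: having reduced to a scalar equation with exactly two roots, you name them explicitly as $\varphi'=\varphi$ and the $\varphi'$ with $k_0\mathbf{s}(\varphi')=-\mathbf{h}(k)$, so that the validity of the second preimage reads off immediately as $-k_0\sin\varphi>0$. This bypasses the $\mathbf{y}^\perp$-frame and the pictorial identification of $A_1\cap A_2$, giving a shorter and more self-contained proof; the paper's version, in return, yields the explicit formulas \eqref{eq:ab} for $\mathbf{a}^\pm,\mathbf{b}^\pm$ and the level-set picture of the indicatrix in $k$-space (\autoref{fig:Card}), which may be useful elsewhere.
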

\begin{proof}
Let $\mathbf{y}\in T(\mathcal U)$, i.e. $\mathbf{y}=\mathbf{h}(k)-k_0\mathbf{s}(\varphi)$ for a $(k,\varphi) \in \mathcal{U}$. 
Note that $ k_0\mathbf{s}(\varphi)\in k_0\mathbb{S}^1$, while $\mathbf{h}(k) = (k,\kappa(k))^\intercal \in k_0 \mathbb{S}_+^1=\{\mathbf{z}\in k_0\mathbb{S}^1 : z_2>0\}$. Therefore, the problem of determining $\Card(T^{-1}(\mathbf{y}))$ consists in finding all $(\mathbf{a},\mathbf{b}) \in k_0 \mathbb{S}_+^1 \times k_0\mathbb{S}^1$ such that $\mathbf{y}=\mathbf{a}-\mathbf{b}$.

We note first that $\Card(T^{-1}(0)) = +\infty$ and that $T^{-1}(0) \subset \mathcal{U}$ is a null set. Therefore, assume now that $\mathbf{y} \neq 0$. Then, with $\mathbf{y}^\perp=(y_2,-y_1)^\intercal$, we can write
	\begin{align*}
		\mathbf{a}= \alpha_0\frac{\mathbf{y}}{\norm{\mathbf{y}}}+\alpha_1\frac{\mathbf{y}^\perp}{\norm{\mathbf{y}}},\qquad
		\mathbf{b}= \beta_0\frac{\mathbf{y}}{\norm{\mathbf{y}}}+\beta_1\frac{\mathbf{y}^\perp}{\norm{\mathbf{y}}},
	\end{align*}
	where $a_2>0$ and the following conditions 
	\begin{align*}
		\alpha_0-\beta_0 =\norm{\mathbf{y}},\\
		\alpha_1-\beta_1= 0,\\
		 \norm{\mathbf{a} }^2=\alpha_0^2+\alpha_1^2=k_0^2,\\
		 \norm{\mathbf{b}} ^2 =\beta_0^2+\beta_1^2= k_0^2
	\end{align*}
	have to hold. Solving this system of equations gives
	\begin{align*}
		\alpha_0 = -\beta_0 =\frac{\norm{\mathbf y}}{2}, \qquad \alpha_1 = \beta_1 = \pm\sqrt{k_0^2-\frac{\norm{\mathbf{y}}^2}{4}}.
	\end{align*}
	Consequently, we have the two potential solutions $(\mathbf a^+, \mathbf b^+)$ and $(\mathbf a^-,\mathbf b^-)$ defined by
	\begin{align}\label{eq:ab}
		\mathbf a^\pm = \frac{\mathbf y}{2}\pm\sqrt{k_0^2-\frac{\norm{\mathbf{y}}^2}{4}}\frac{\mathbf y^\perp}{\norm{\mathbf y}}, \qquad
		\mathbf b^\pm = -\frac{\mathbf y}{2}\pm\sqrt{k_0^2-\frac{\norm{\mathbf{y}}^2}{4}}\frac{\mathbf y^\perp}{\norm{\mathbf y}}
	\end{align}
	so that $\Card \left( T^{-1}(\mathbf y) \right) \le 2$ for $\mathbf y \neq 0.$ First, we note that $\mathbf a^+ = \mathbf a^-$ and $\mathbf b^+ = \mathbf b^-$ if $ \norm{\mathbf y}  = 2k_0,$ which means $\Card \left( T^{-1}(\mathbf y) \right) = 1$ in this case.
	
	Next, we have to check the condition $a^\pm_2>0$, i.e.\ $\mathbf{a}^\pm \in k_0 \mathbb{S}^1_+$. It follows from \autoref{eq:ab} that
	\begin{align*}
		a^\pm_2 > 0 \quad \Leftrightarrow \quad \frac{y_2}{2} \mp \sqrt{k_0^2-\frac{ \norm{\mathbf{y}}^2}{4}}\frac{y_1}{ \norm{\mathbf{y}}}>0,
	\end{align*}
	and after some rearrangements
	\begin{align}
		a^+_2 > 0 \quad  &\Leftrightarrow \quad \mathbf y \in A_1 \coloneqq \left\lbrace \mathbf{y}\in T(\mathcal U):\ y_2 > \sqrt{k_0^2-(y_1-k_0)^2},\text{ if } y_1 > 0 	\right\rbrace, \label{eq:A1}  \\
		a^-_2 > 0 \quad  &\Leftrightarrow \quad \mathbf y \in A_2 \coloneqq \left\lbrace \mathbf{y}\in T(\mathcal U):\ y_2 > \sqrt{k_0^2-(y_1+k_0)^2},\text{ if } y_1 < 0	\right\rbrace.\label{eq:A2}
	\end{align}
	See \autoref{fig:Card} for an illustration of these two sets. Summarizing, for arbitrary $\mathbf y \in T(\mathcal{U})$ we have
	\begin{align*}
		\Card \left( T^{-1}(\mathbf y) \right) =
		\begin{cases}
			+\infty,	& \text{if } \mathbf{y} = 0, \\
			2,		& \text{if } 0 < \norm{\mathbf{y}}  < 2k_0 \text{ and } \mathbf{y} \in A_1 \cap A_2, \\
			1,		& \text{otherwise.}
		\end{cases}
	\end{align*}
	Observing that $A_1 \cap A_2$ agrees with $T((-k_0,k_0) \times (-\pi,0) )$ up to a null set, see also \autoref{fig:frequency_coverage}, finishes the proof.
\end{proof}

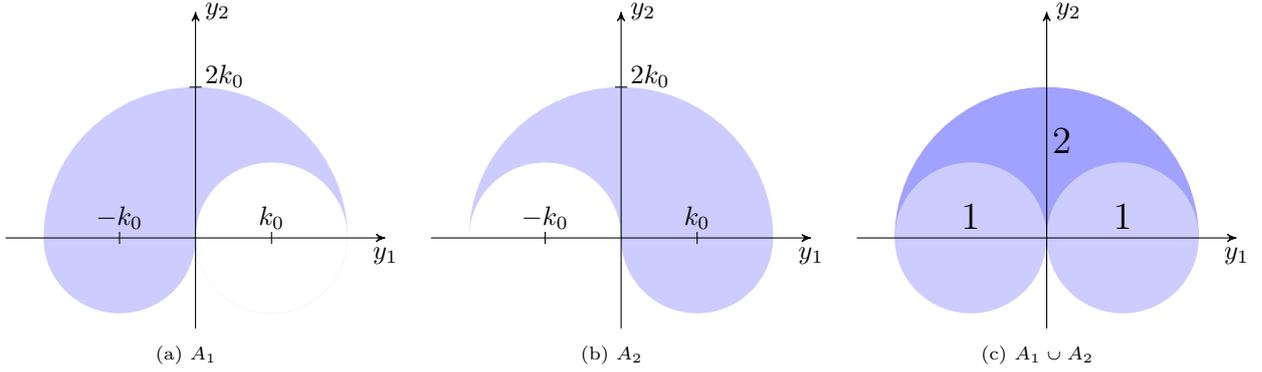
\begin{figure}
	\centering
	\begin{subfigure}[b]{0.3\textwidth}
		\begin{tikzpicture}[scale=1]
			\fill[blue, opacity = 0.2] (2,0) arc (0:180:2cm);
			\fill[blue, opacity = 0.2] (2,0) arc (0:-180:1cm);
			\fill[blue, opacity = 0.2] (0,0) arc (0:-180:1cm);
			\fill[white] (2,0) arc (0:360:1cm);

			\draw[->] (0,-1.2) -- (0,3) node[anchor = west]{$y_2$};
			\draw[->] (-2.5,0) -- (2.5,0) node[anchor = north]{$y_1$};
			
			\draw (1,-0.08) -- (1,0.08);
			\node[anchor = south] at (1,0){$k_0$};
			\draw (-1,-0.08) -- (-1,0.08);
			\node[anchor = south] at (-1,0){$-k_0$};
			\draw (-0.08,2) -- (0.08,2);
			\node[anchor = west] at (0,2.15){$2k_0$};
		\end{tikzpicture}
		\caption{$A_1$}
	\end{subfigure}\hfill
	\begin{subfigure}[b]{0.3\textwidth}
		\begin{tikzpicture}[scale=1]
			\fill[blue, opacity = 0.2] (2,0) arc (0:180:2cm);
			\fill[blue, opacity = 0.2] (2,0) arc (0:-180:1cm);
			\fill[white] (0,0) arc (0:360:1cm);
			
			\draw[->] (0,-1.2) -- (0,3) node[anchor = west]{$y_2$};
			\draw[->] (-2.5,0) -- (2.5,0) node[anchor = north]{$y_1$};
			
			\draw (1,-0.08) -- (1,0.08);
			\node[anchor = south] at (1,0){$k_0$};
			\draw (-1,-0.08) -- (-1,0.08);
			\node[anchor = south] at (-1,0){$-k_0$};
			\draw (-0.08,2) -- (0.08,2);
			\node[anchor = west] at (0,2.15){$2k_0$};
		\end{tikzpicture}
		\caption{$A_2$}
	\end{subfigure}\hfill
	\begin{subfigure}[b]{0.3\textwidth}
		\begin{tikzpicture}[scale=1]
			\fill[blue, opacity = 0.1] (2,0) arc (0:180:2cm);
			\fill[blue, opacity = 0.3] (2,0) arc (0:180:2cm);
			\fill[white] (2,0) arc (0:360:1cm);
			\fill[white] (0,0) arc (0:360:1cm);
			
			\fill[blue, opacity = 0.2] (2,0) arc (0:360:1cm);
			\fill[blue, opacity = 0.2] (0,0) arc (0:360:1cm);
			
			\draw[->] (0,-1.2) -- (0,3) node[anchor = west]{$y_2$};
			\draw[->] (-2.5,0) -- (2.5,0) node[anchor = north]{$y_1$};
			
			
			\node[anchor = south] at (1,0){\Large$1$};
			\node[anchor = south] at (-1,0){\Large$1$};
			\node[anchor = south] at (.2,1){\Large$2$};
		\end{tikzpicture}
		\caption{$A_1\cup A_2$}
	\end{subfigure}
	\caption{\textbf{Level sets of Banach indicatrix.} (a) and (b) illustrate the the sets $A_1,A_2\subset T(\mathcal{U})$ as defined in \autoref{eq:A1} and \autoref{eq:A2}. (c) shows the union of the sets and the level sets of the Banach indicatrix $\Card(T^{-1} (\mathbf y) )$.}
	\label{fig:Card}
\end{figure}
	\section{Numerical implementation}\label{sec:numerics}
We have now intensively studied DT for a generalized incident field and presented the components needed for a reconstruction of the scattering potential from a set of scattering experiments. The aim of this section is now to transfer this theoretical knowledge into practice by providing the numerical framework that will subsequently be used for numerical experiments. 

As $f$ is assumed to be compactly supported within $\mathcal{B}_{r_s}$, we discretize the object domain for $M\in2\N$ in terms of 
\[
\mathcal R_M \coloneqq \frac{2r_s}{M}\mathcal{I}_{M}^2\subset [-r_s,r_s]^2, 
\]
where
\[
\mathcal{I}_M\coloneqq\left\lbrace -\frac{M}{2}+j: j = 0,\dots,M-1\right\rbrace
\] 
are equidistant grid points. Furthermore, we introduce for $D\in2\N$ the set of plane wave angles
\[S_D\coloneqq \frac{2\pi}{D}\mathcal{I}_D\subset[-\pi,\pi)\]
that is used for modeling the incident wave and herewith the grid
\[
\mathcal{U}_{M,D}\coloneqq\left\lbrace (k,\varphi): \ k\in\frac{2k_0}{M}\mathcal{I}_M,\  k<k_0,  \ \varphi\in S_D\right\rbrace.
\]
In the remainder of this paper, for simplicity, we also use the set $S_D$ to specify the rotation angles of the incident field. The sampling set in $k$-space is specified by
\[
\mathcal{Y}_{M,D} \coloneqq T(\mathcal{U}_{M,D}).
\] 

\subsection{Simulation of measurements}
For numerical experiments, we generate synthetic data by applying the forward model to a known scattering potential $f$. Applying a quadrature on the uniform grid $S_D$ to \autoref{eq:setup1} yields
\begin{equation}\label{eq:discr_meas1}
	m(k,\theta) \approx \frac{2\pi}{D} \sum_{\varphi\in S_D} a(\varphi-\theta) \mathcal{F}f(T(k,\varphi)),\qquad (k,\theta)\in\mathcal{U}_{M,D}.
\end{equation}
Here, the Fourier transform $\mathcal{F}f$ of our phantom $f$ is in general analytically not known. We therefore calculate an approximation via the two-dimensional \emph{nonuniform discrete Fourier transform} (NDFT) \cite[section 7.1]{PloPotSteTas18}
\begin{equation}\label{eq:ndft}
	\mathcal{F}f(\textbf{y}) \approx \frac{1}{2\pi} \left( \frac{2r_s}{M}\right)^2\sum_{\textbf{r}\in\mathcal R_M} f(\textbf{r})e^{-i\textbf{r}\cdot\textbf{y}}= \frac{1}{2\pi} \left( \frac{2r_s}{M}\right)^2\sum_{\textbf{j}\in\mathcal{I}_M^2}f\left( \frac{2r_s}{M}\textbf{j}\right) e^{-i\frac{2r_s}{M}\textbf{j}\cdot\textbf{y}},\quad \textbf{y}\in\mathcal{Y}_{M,D}.
\end{equation} 
Then, by inserting \autoref{eq:discr_meas1} in \autoref{eq:ndft} we obtain for known $f$ with unknown Fourier transform the simulated measurement data 
\begin{align}\label{eq:discr_meas}
	\mathbf{m}(k,\theta) \coloneq \frac{2\pi}{D} \left( \frac{2r_s}{M}\right)^2\sum_{\varphi\in S_D} a(\varphi-\theta) \sum_{\textbf{j}\in\mathcal I_M^2}f\left( \frac{2r_s}{M}\textbf{j}\right) e^{-i\frac{2r_s}{M}\textbf{j}\cdot T(k,\varphi)}, \quad (k,\theta)\in\mathcal{U}_{M,D}
\end{align}
from which we aim to reconstruct $f$. To avoid committing an inverse crime \cite[section 1.2]{KaiSom05}, we ensure that the discretization in the numerical forward simulation differs from the one used in the inversion. Additionally, we disturb the simulated data with white Gaussian noise 
\begin{equation*}
	\mathbf{m}^\delta =\mathbf{m} + \delta\mathcal{N}(0,1),
\end{equation*}
where $\delta>0$ and $\mathcal{N}(0,1)$ denotes the standard Gaussian distribution. 

\subsection{Discrete backpropagation}\label{subsec:discrsvd}
The objective is now to reconstruct the scattering potential $f$ on the grid $\mathcal{R}_M$ using the simulated measurement data from \autoref{eq:discr_meas}. Following \autoref{sec:invprob}, this requires a two-step inversion process: First, we have to perform the  TSVD to extract $\mathcal{F}f$ on $\mathcal Y_{M,D}$. Based on these $k$-space data, the scattering potential can be reconstructed using Fourier inversion. To calculate the coefficients needed for the first inversion step, we apply a discretization of the inner $L^2$-product. For $N\in\N$ and $e_n$, $n=-N,\dots,N$ being the complex exponentials (as defined in \autoref{eq:coeffa}), we obtain
\begin{equation}\label{eq:disca}
	a_n=\frac{2\pi}{D}\sum_{\varphi\in S_D} a(\varphi)\overline{e_n(\varphi)} ,\qquad \ n = -N,\dots,N,
\end{equation}
and
\begin{equation}\label{eq:discm}
	\mathbf{m}_{n}(k) \coloneq  \frac{2\pi}{D}\sum_{\theta\in S_D}\mathbf{m}^\delta(k,\theta)e_n(\theta),\qquad\  n = -N,\dots,N,\ k\in\frac{2k_0}{M}\mathcal{I}_M, \ k<k_0.
\end{equation} 
Then, according to  \autoref{eq:tuncsvd} we approximate via
\begin{equation}\label{eq:discr_pseudo}
	\mathcal{F}f(T(k,\varphi)) \approx \mathbf{g}_N(k,\varphi)\coloneqq\frac{1}{2\pi}\sum_{n=-N}^N\frac{\mathbf{m}_n(k)}{a_n}e_n(\varphi), \qquad (k,\varphi)\in\mathcal{U}_{M,D}
\end{equation} 
the data in $k$-space. Herewith the scattering potential can be determined using \autoref{eq:approxfbp} which turns in the discrete setting into
\begin{equation}\label{eq:discrbp}
	\mathbf{f}_N^{\text{bp}}(\mathbf r)\coloneqq\frac{2k_0}{MD}\sum_{(k,\varphi)\in \mathcal{U}_{M,D}} \mathbf{g}_N(k,\varphi)e^{i T(k,\varphi)\cdot\mathbf r} \times \frac{|\det(\nabla T(k,\varphi))|}{\Card(T^{-1}(T(k,\varphi)))}, \quad \mathbf{r}\in\mathcal{R}_M,
\end{equation} 
where the Jacobian determinant is specified in \autoref{eq:jacobian} and the cardinality can be taken from \autoref{lem:Card}. Finally, the algorithm for reconstructing the scattering potential from the measurements can be summarized as follows:\vspace{0.5cm}\\
\begin{algorithm}[H]
	\caption{\vspace*{0.5cm}Discrete backpropagation of $f$.}\label{alg:full}
	\vspace*{0.3cm}
	\KwInput{Beam profile $a$, measurements $\mathbf m$ on $\mathcal{U}_{M,D}$, wavenumber $k_0>0$, truncation level $N$}
	\KwOutput{Backpropagated solution $\mathbf{f}_N^{\text{bp}}$ on $\mathcal{R}_M$}\vspace*{0.1cm}
	\For{$k\in\frac{2k_0}{M}\mathcal{I}_M$}{
		\For{$n=-N,\dots,N$}{
			Compute $a_n$ via \autoref{eq:disca} and $\mathbf{m}_n(k)$ via \autoref{eq:discm};\\		
		}
		\emph{Step 1:} Calculate $\mathbf{g}_N$ via \autoref{eq:discr_pseudo} to approximate $\mathcal{F}f$ on $\mathcal{Y}_{M,D}$;\\
		\emph{Step 2:} Find $\mathbf{f}^{\text{bp}}_N$ on $\mathcal{R}_M$ by evaluating \autoref{eq:discrbp};
	\vspace*{0.3cm}
	}
\end{algorithm}
\section{Numerical experiments}\label{sec:experiments}
The aim of this section is to substantiate the reconstruction process presented in \autoref{alg:full} by conducting computer simulations. We, therefore, specify a phantom $f$ as illustrated in \autoref{subfig:phantom}, which consists of a circular medium containing two smaller inclusions. In all experiments, we illuminate this test sample using a Gaussian beam profile from \autoref{eq:gaussian_a}. To compare the impact of focusing, we choose both amplitudes, $A=10$ (focused) and $A=80$ (unfocused), as shown in \autoref{fig:gaussian_beams}. We assume that the incident field oscillates at the frequency $k_0=\frac{2\pi}{\lambda}$ with $\lambda=1$ and performs a full rotation around the object. Further, we collect the data at the receiver line $\{\mathbf{r}\in\R^2: \  r_2 = 5\}$. For reconstruction, we use $D=200$ angles of incidence and rotations. Further, the resolution of the object and Fourier domain is chosen as $M=400$. The resulting simulated data, depending on the beam waist $A$, which will be used for the reconstruction in the following, are visualized in \autoref{subfig:meas10} and \autoref{subfig:meas80}.

\begin{figure}[t]
	\centering 
	\captionsetup[subfigure]{width=0.9\linewidth}
	\begin{subfigure}[t]{0.3\textwidth}
		\centering
		\includegraphics[scale=0.3]{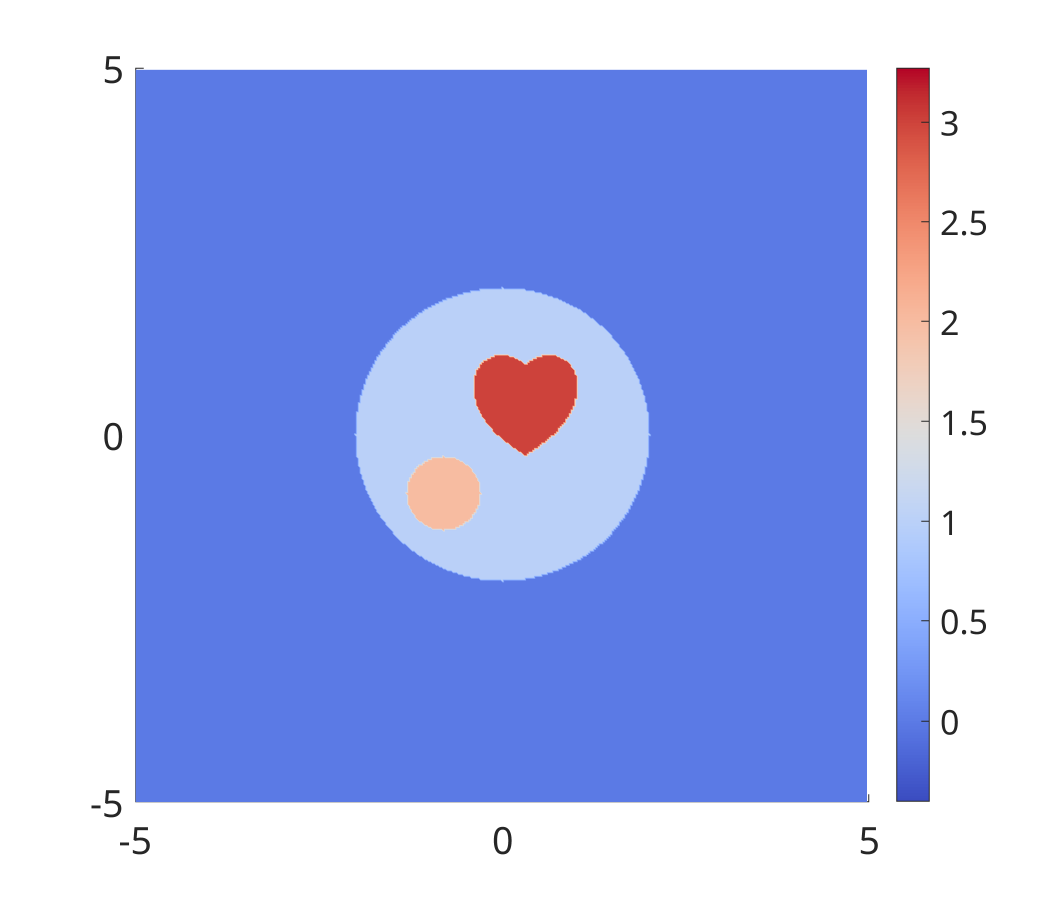}
		\caption{Test sample $f$}
		\label{subfig:phantom}
	\end{subfigure}\hfil
	\begin{subfigure}[t]{0.3\textwidth}
		\centering
		\includegraphics[scale=0.3]{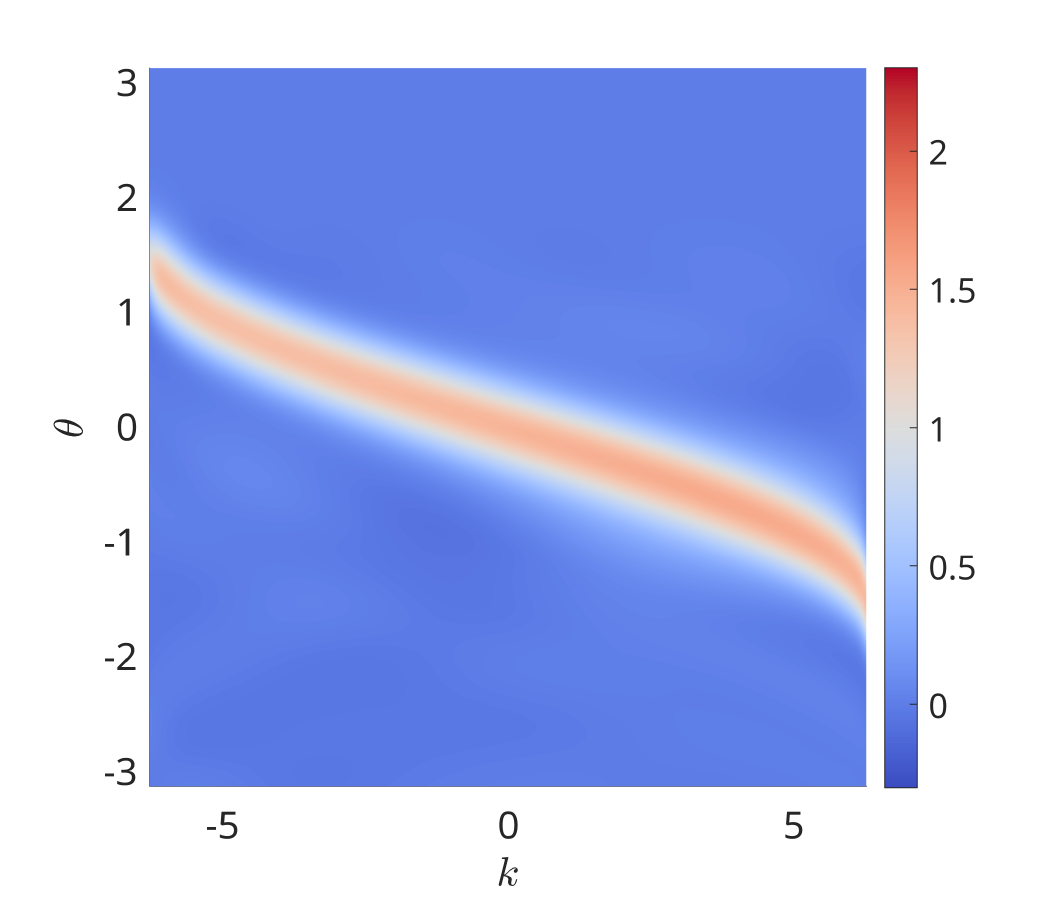}
		\caption{Data $\text{Re}(\mathbf{m})$ for $A=10$}
		\label{subfig:meas10}
	\end{subfigure}\hfil
	\begin{subfigure}[t]{0.3\textwidth}
		\centering
		\includegraphics[scale=0.3]{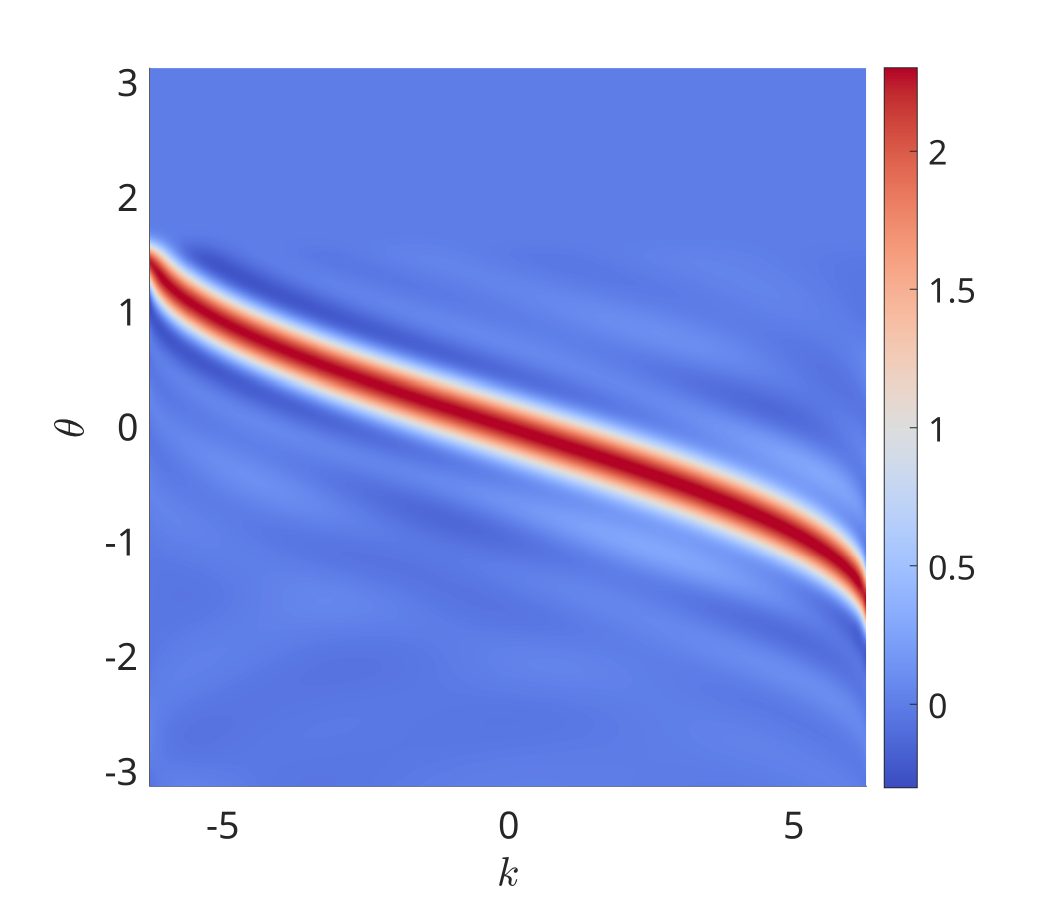}
		\caption{Data $\text{Re}(\mathbf{m})$ for $A=80$}
		\label{subfig:meas80}
	\end{subfigure}
\caption{Illustration of the phantom and the corresponding data according to \autoref{eq:discr_meas} using a Gaussian beam with amplitudes $A\in\{10,80\}$.}
\label{fig:phantom}
\end{figure}
For quality assessment, we introduce the \emph{peak-signal-to-noise-ratio} (PSNR) that compares a reconstruction $\mathbf{v}$ with a ground truth $\mathbf{u}$ by 
\[
\operatorname{PSNR}(\mathbf{u},\mathbf{v})\coloneqq 10\log_{10}\frac{\max_{\textbf{r}\in\mathcal{R}_M}\abs{\mathbf{u}(\mathbf{r})}^2}{M^{-2}\sum_{\textbf{r}\in\mathcal{R}_M} \abs{\mathbf{u}(\mathbf{r})-\mathbf{v}(\mathbf{r})}^2}.
\]
\subsection{Evaluation of the TSVD and choice of the truncation level}\label{subsec:testsvd}
As already pointed out in \autoref{sec:svd}, the parameter $N$ in \autoref{alg:full} influences the accuracy and stability of the first inversion step and, consequently, the backpropagation. In order to select an appropriate $N\in\N$ we start with numerically evaluating the first inversion step separately. To simplify the analysis we fix the frequency $k=0$ and empirically verify that the chosen parameter $N$ is suitable for other $k\in(-k_0,k_0)$. Therefore, without loss of generality, we approximate the function $g \coloneqq \mathcal{F}f(T(0,\cdot))$ from $\mathbf{m}(0,\cdot)$ via
\begin{equation}\label{eq:tsvd_discr}
 \mathbf{g}_N(\varphi)= \sum_{n=-N}^{N}\frac{\mathbf{m}_n(0)}{a_n}e_n(\varphi), \quad \varphi\in S_D.
\end{equation}
To determine a suitable parameter $N\in\N$, we analyze the behavior of the SVD coefficients $\mathbf{m}_n$ and $\mathbf{m}_n/a_n$ by examining the Picard plots for various noise levels, following the approach suggested in \cite[section 3.3]{Han10}.

In \autoref{fig:svd10} a Gaussian beam with $A=10$ was used for illumination. Different noise levels are applied to the data, where the expression ``$X$\% noise'' denotes
\[ 
\frac{\norm{\mathbf{m}^\delta - \mathbf{m}}}{ \norm{\mathbf{m}} } \approx \frac{X}{100}. 
\]
The Picard plots show that the SVD coefficients $\mathbf{m}_n$ initially decay,  leveling off at a plateau for larger values of the index $\abs{n}>12$. As a result, the solution coefficients $\mathbf{m}_n/a_n$ initially decrease, but for $\abs{n}>12$, they begin to increase again, indicating the necessity of truncating the sum in \autoref{eq:tsvd_discr} at level $N=12$. The target function $g$ as well as the regularized solution $\mathbf{g}_{12}$ for different noise levels are depicted below the corresponding Picard plots. For perturbed data, we observe oscillations that occur due to the fact that the singular values $\abs{a_n}$ are already relatively small for $\abs{n}<12$. Nonetheless, choosing a smaller parameter $N$ would also lead to an erroneous result since in this case coefficients contributing to the solution would be neglected. This trade-off between accuracy and robustness against perturbations is a characteristic feature of solutions to ill-posed problems.

Interestingly, the observations change in \autoref{fig:svd_80}, where we used the same setting except for the parameter $A=80$ meaning that the illumination was without focusing. The Picard plots indicate that singular values decay more slowly leading to a noise-robust reconstruction. Therefore, in terms of stability, it appears that focusing is disadvantageous for the full reconstruction of $f$. However, in the case of noisy measurements, the solution coefficients $\mathbf{m}_n/a_n$ also start increasing for $\abs{n}>12$. Therefore, we choose $N=12$ for the beam DT reconstruction in both scenarios: focused and unfocused imaging.
\begin{figure}[t]
	\centering 
	\captionsetup[subfigure]{width=0.9\linewidth}
	\begin{subfigure}[t]{0.3\textwidth}
		\centering
		\includegraphics[scale=0.28]{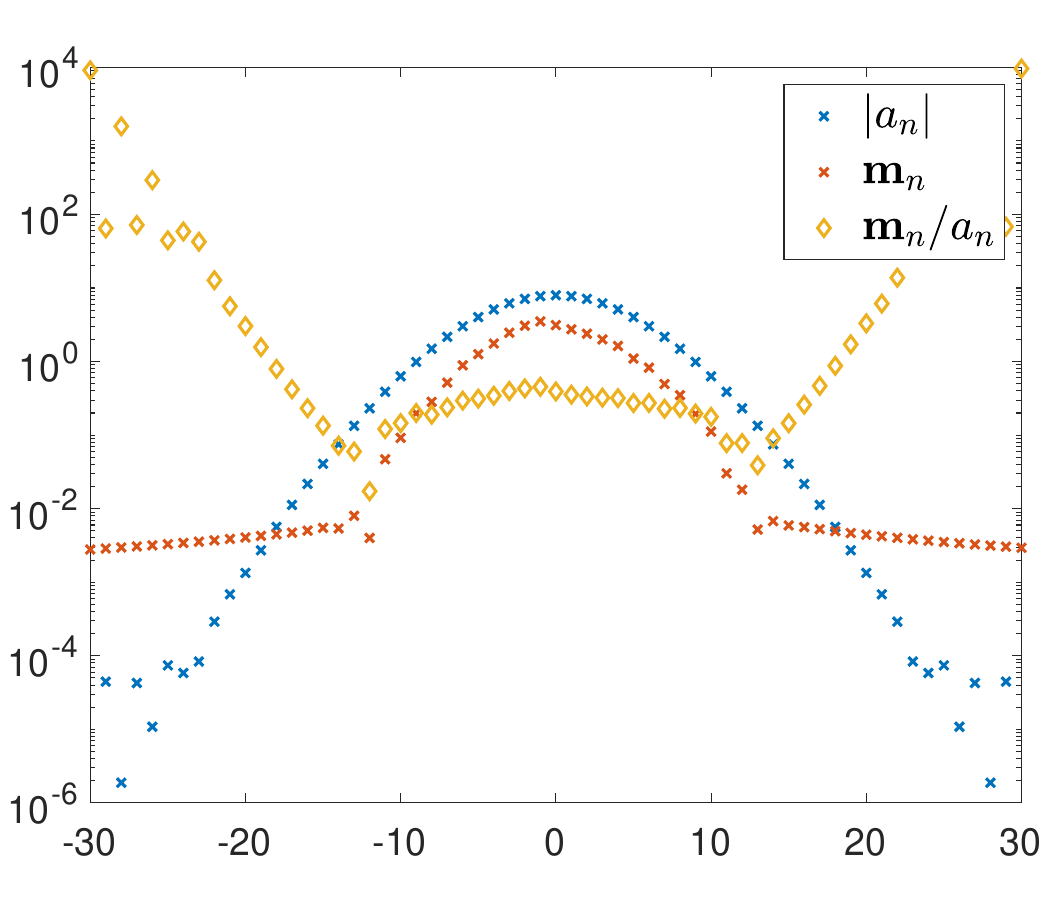}
		\caption{Picard plot without noise}
	\end{subfigure}\hfil
	\begin{subfigure}[t]{0.3\textwidth}
	\centering
	\includegraphics[scale=0.28]{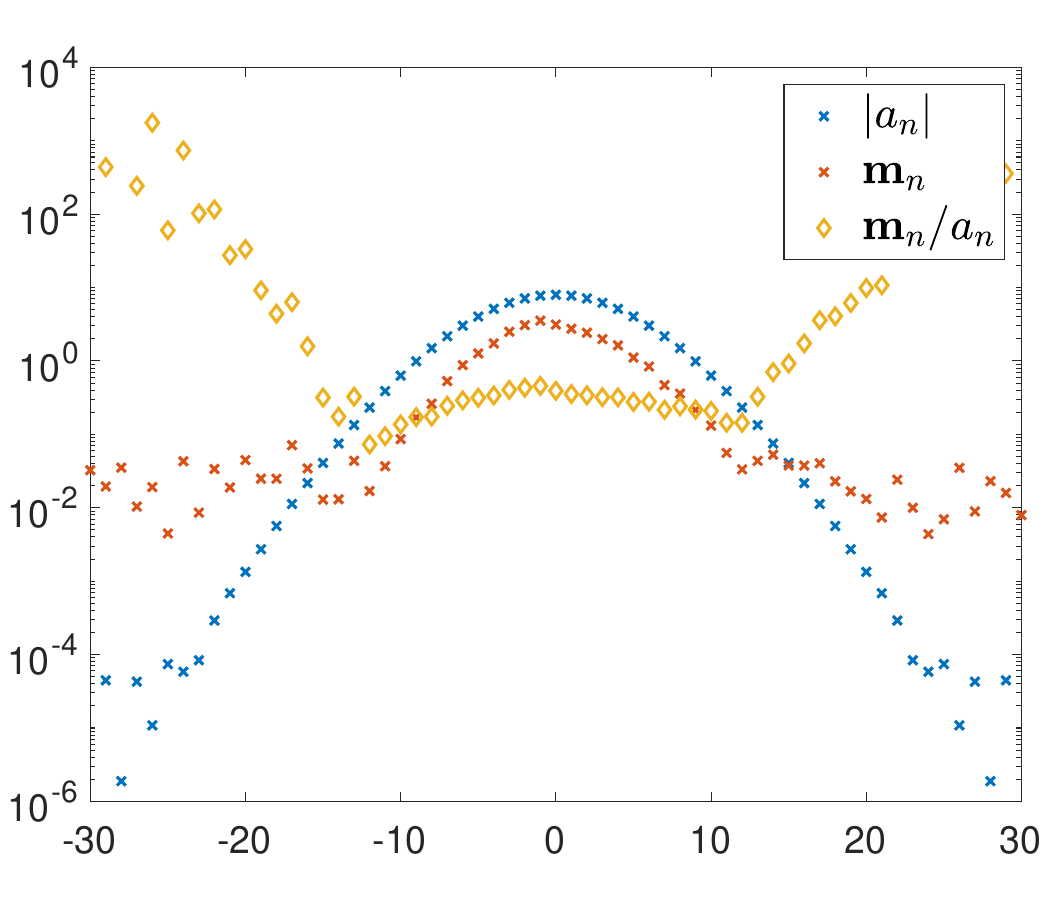}
	\caption{Picard plot using 5\% noise}
\end{subfigure}\hfil
	\begin{subfigure}[t]{0.3\textwidth}
	\centering
	\includegraphics[scale=0.28]{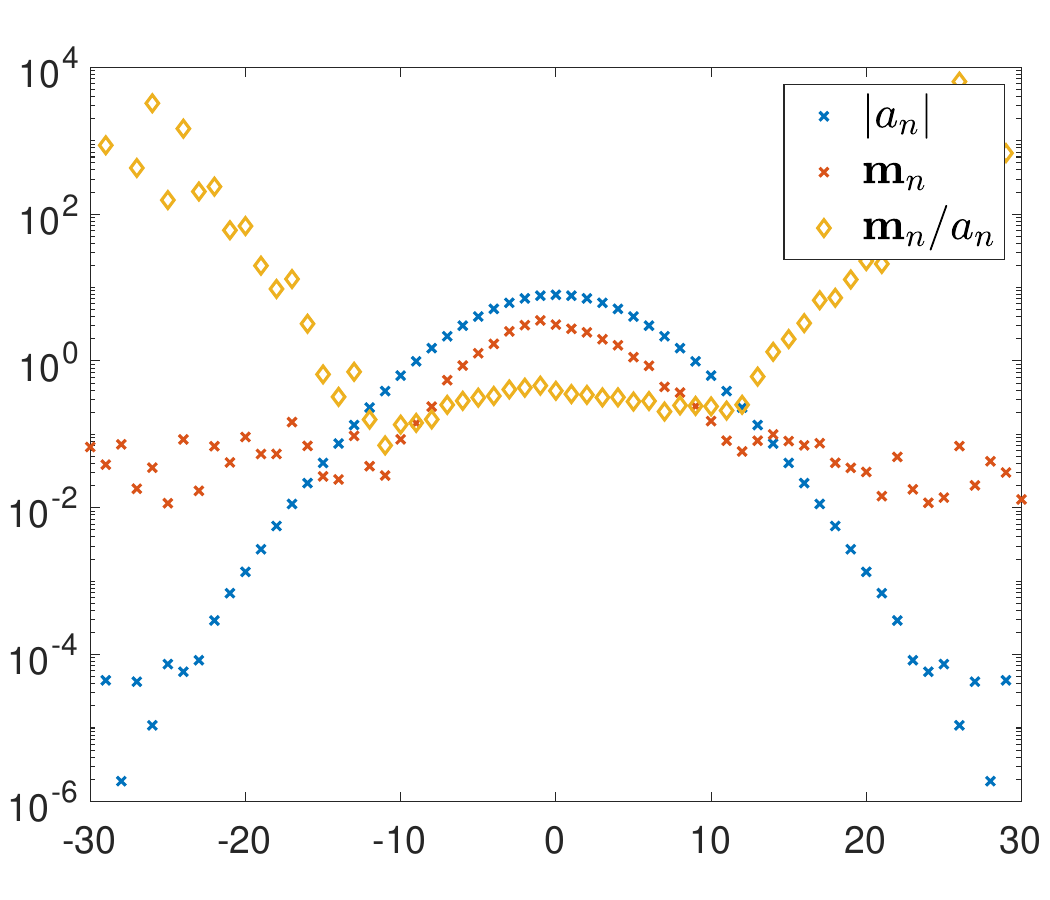}
	\caption{Picard plot using 10\% noise}
\end{subfigure}\\
	\begin{subfigure}[t]{0.3\textwidth}
		\centering
		\includegraphics[scale=0.28]{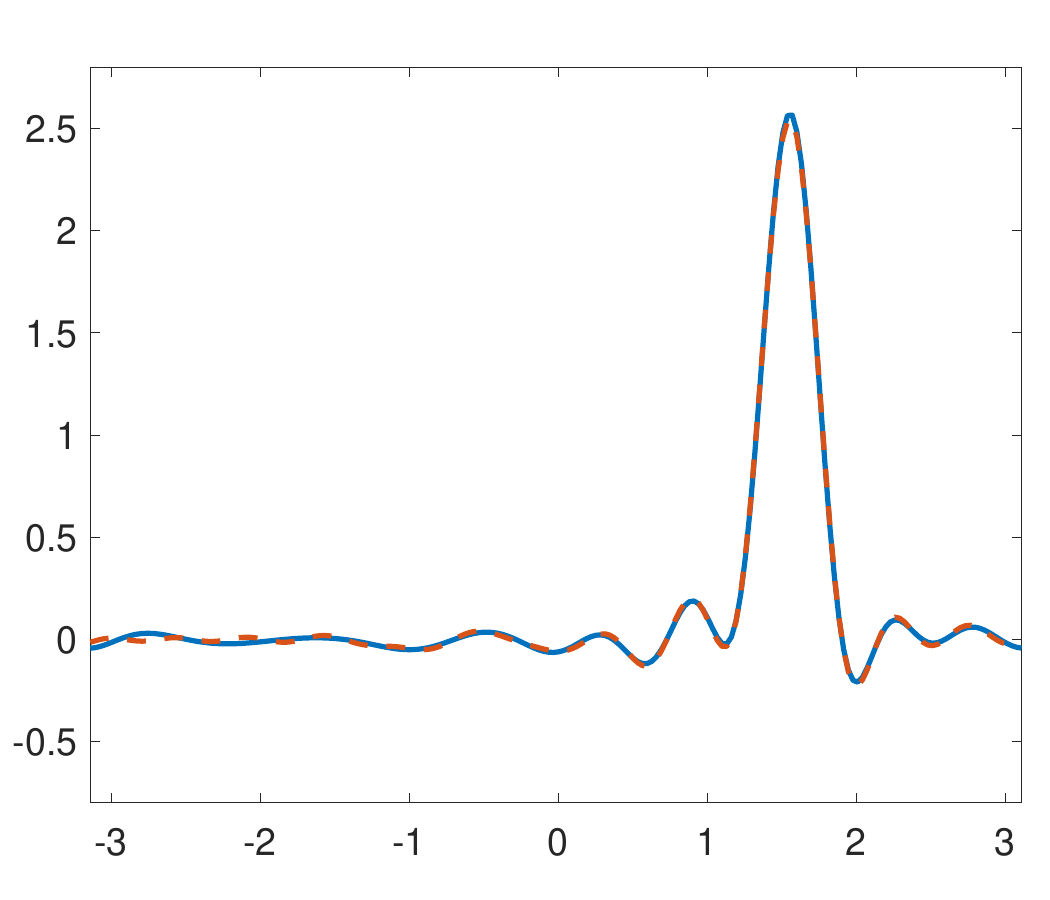}
	\caption{Reconstruction without noise}
	\end{subfigure}\hfil
	\begin{subfigure}[t]{0.3\textwidth}
	\centering
	\includegraphics[scale=0.28]{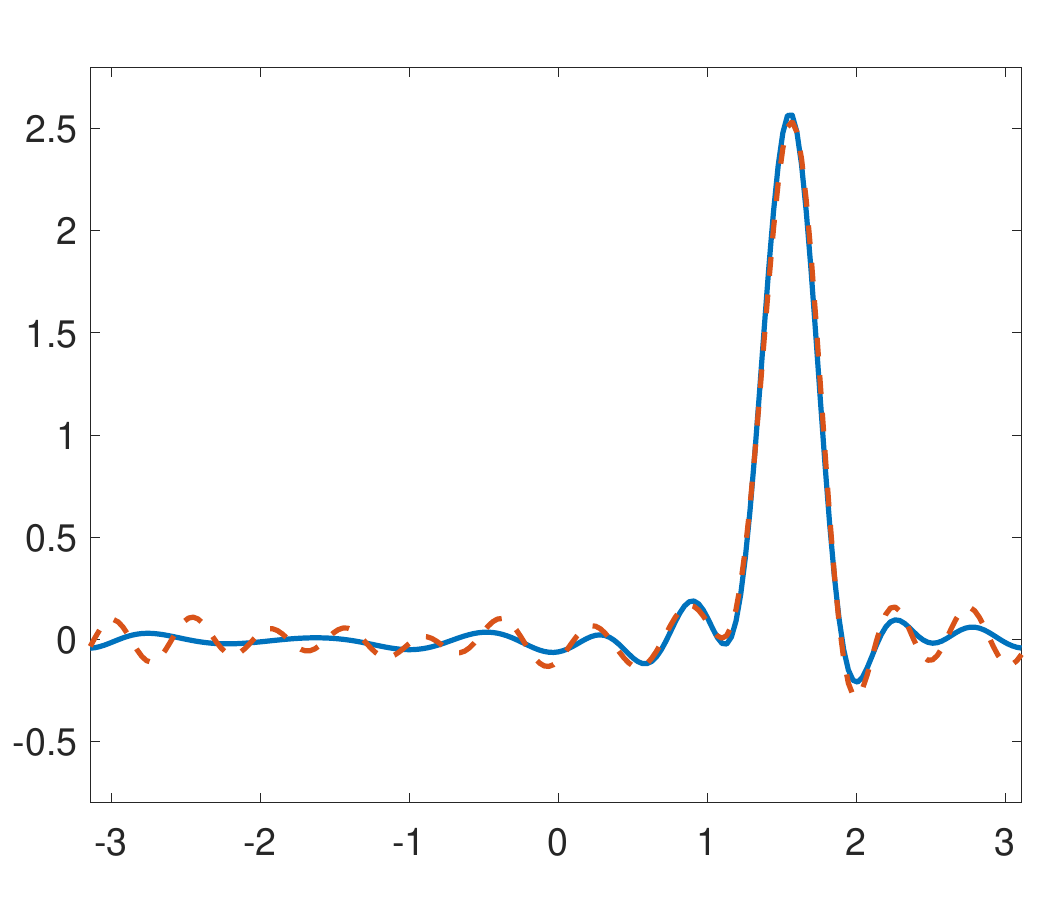}
	\caption{Reconstruction using 5\% noise}
	\end{subfigure}\hfil
	\begin{subfigure}[t]{0.3\textwidth}
	\centering
	\includegraphics[scale=0.28]{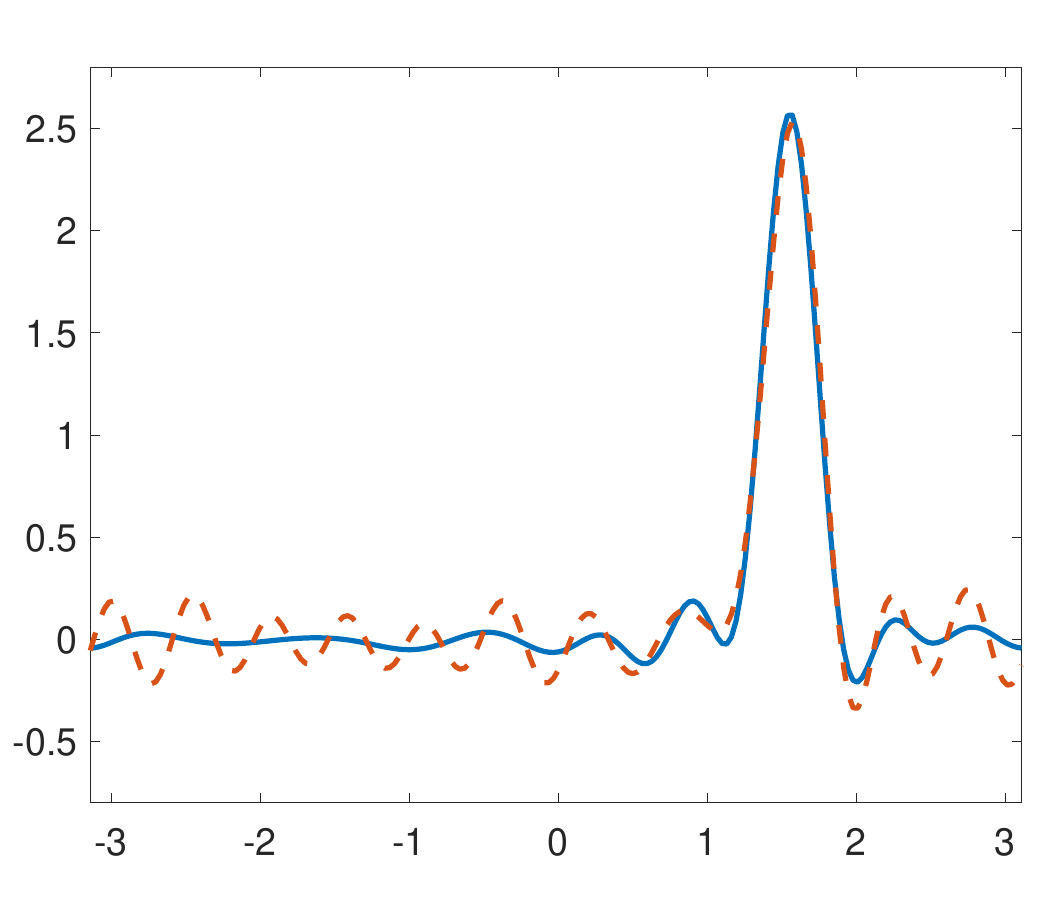}
	\caption{Reconstruction using 10\% noise}
\end{subfigure}
\caption{\textbf{Choice of the truncation level.} Inversion via TSVD, specified in \autoref{eq:tsvd_discr}, using data having different noise levels and the Gaussian integral kernel from \autoref{eq:gaussian_a} with $A=10$. Top: Illustration of the Picard condition indicating that $N=12$ is a suitable regularization parameter for all noise levels. Bottom: The original target function $g$ is plotted in a solid blue line while the reconstruction $\mathbf{g}_{12}$ is shown as a dashed red line. The reconstruction starts oscillating in case of  $5\%$ noise.}
\label{fig:svd10}
\end{figure}

\begin{figure}[t]
	\centering 
	\captionsetup[subfigure]{width=0.9\linewidth}
	\begin{subfigure}[t]{0.3\textwidth}
		\centering
		\includegraphics[scale=0.28]{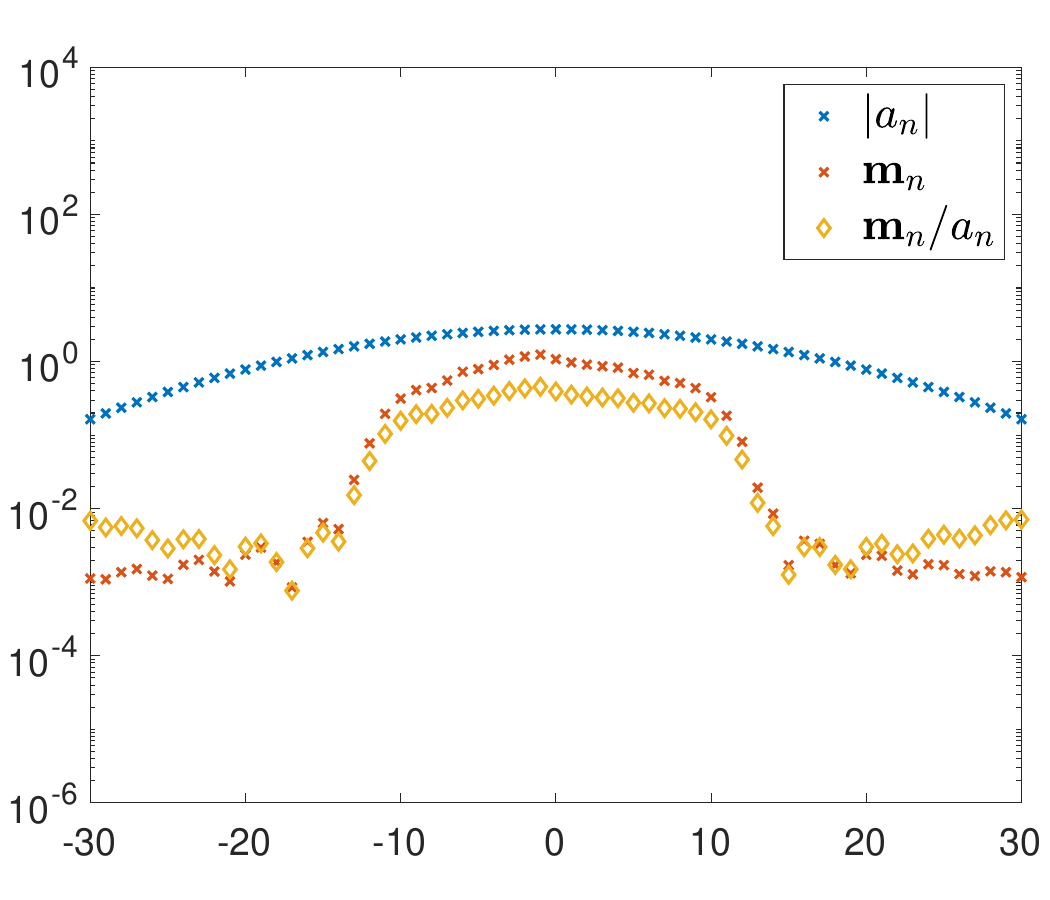}
		\caption{Picard plot without noise}
	\end{subfigure}\hfil
	\begin{subfigure}[t]{0.3\textwidth}
	\centering
	\includegraphics[scale=0.28]{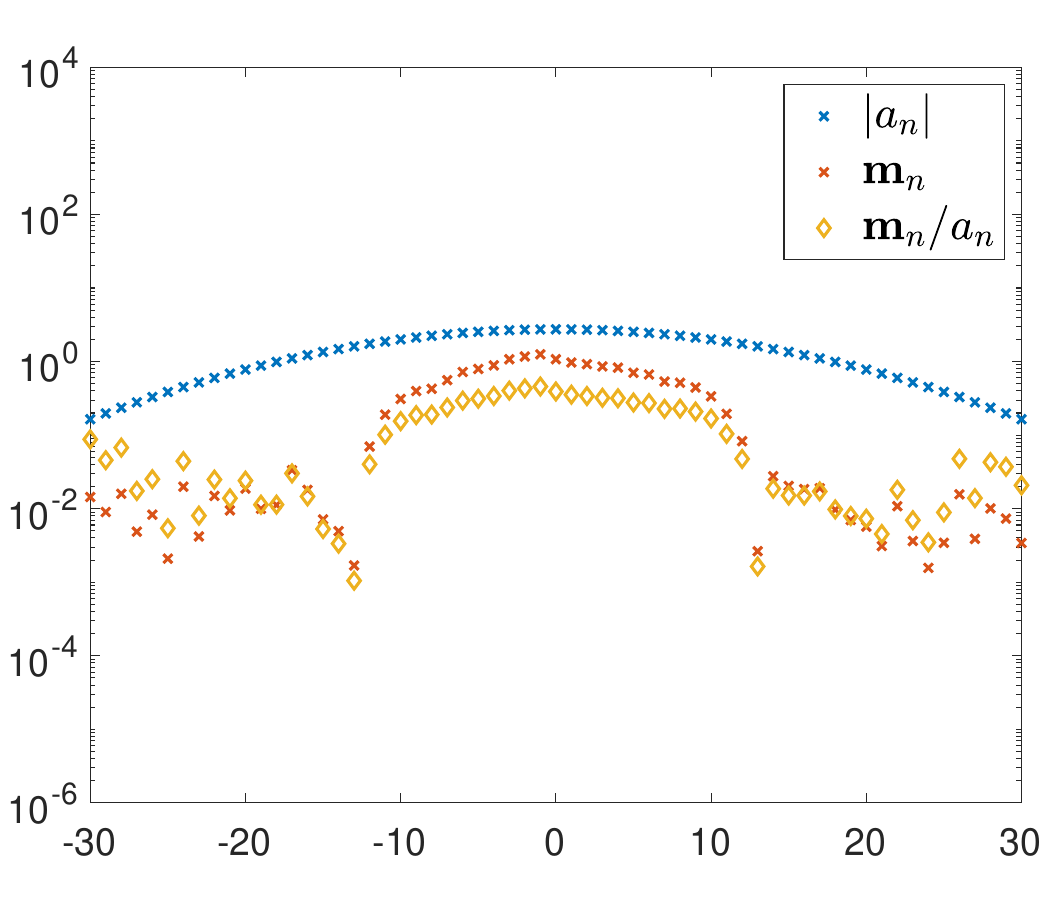}
	\caption{Picard plot using 5\% noise}
\end{subfigure}\hfil
	\begin{subfigure}[t]{0.3\textwidth}
	\centering
	\includegraphics[scale=0.28]{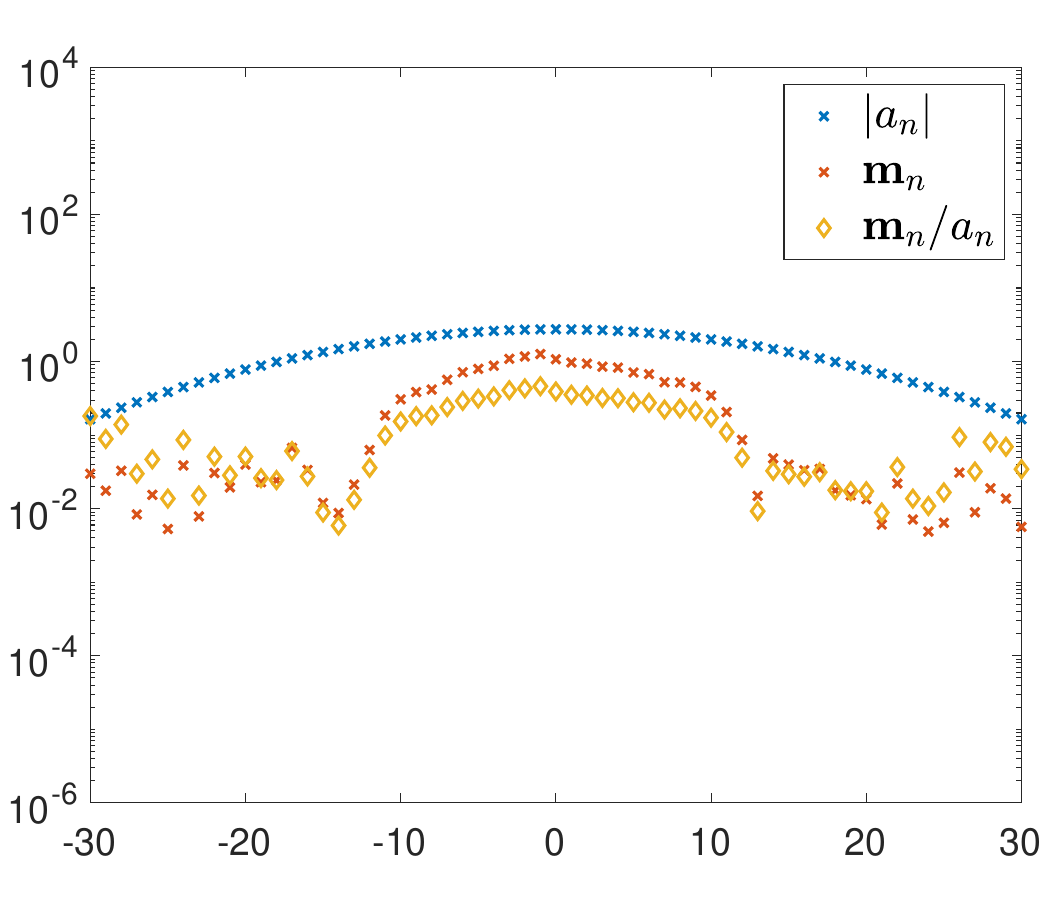}
	\caption{Picard plot using 10\% noise}
\end{subfigure}\\
	\begin{subfigure}[t]{0.3\textwidth}
		\centering
		\includegraphics[scale=0.28]{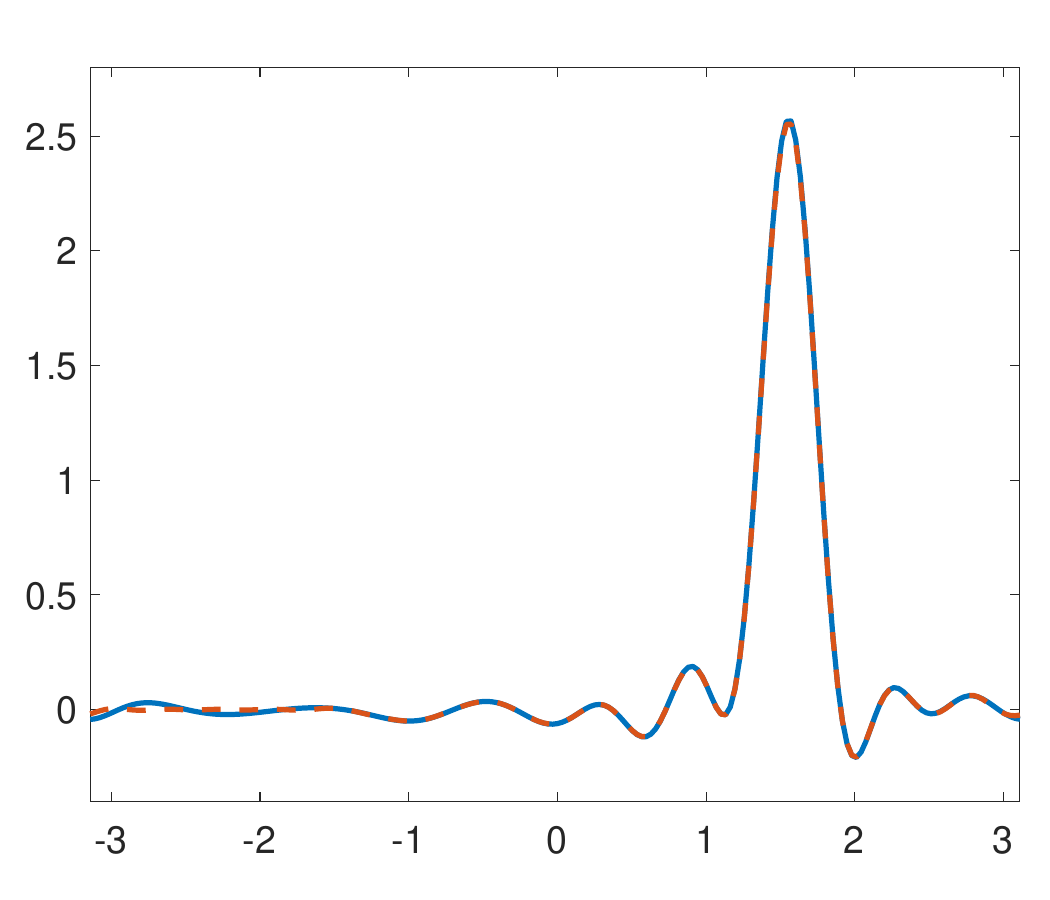}
		\caption{Reconstruction  without noise}
	\end{subfigure}\hfil
\begin{subfigure}[t]{0.3\textwidth}
\centering
\includegraphics[scale=0.28]{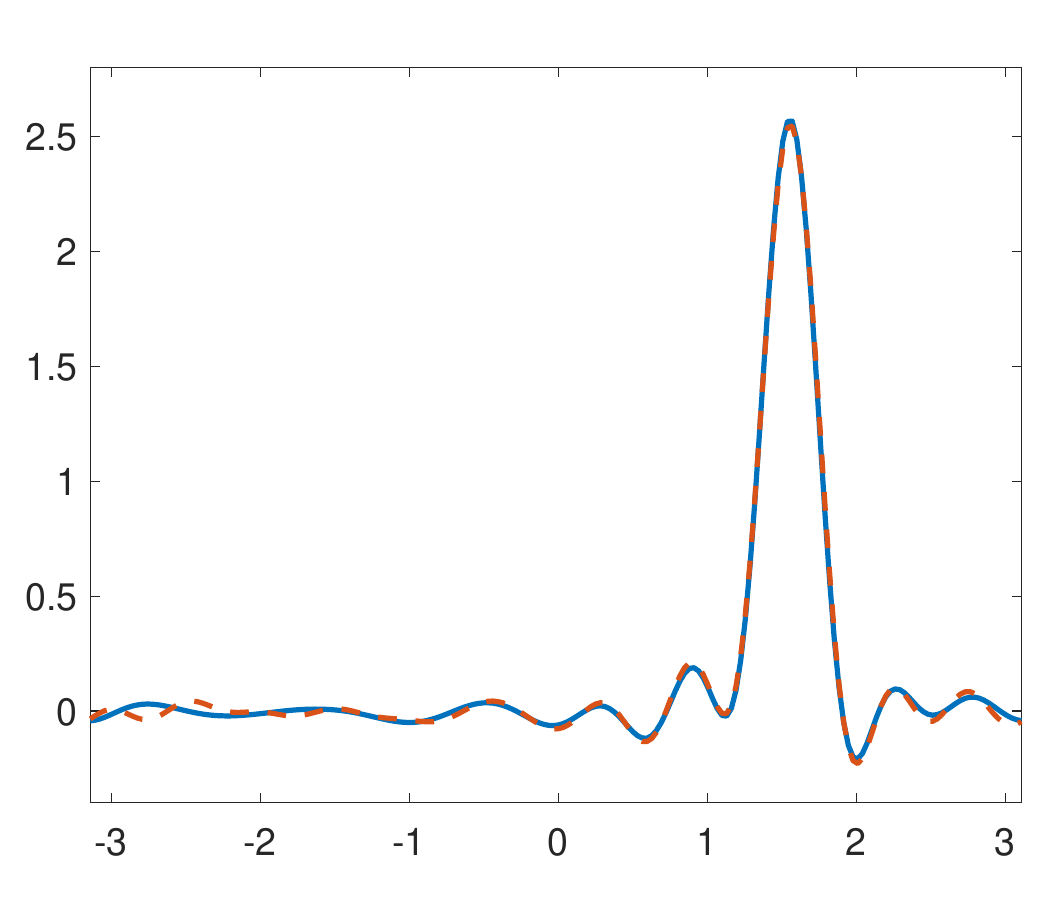}
\caption{Reconstruction  using 5\% noise}
\end{subfigure}\hfil
\begin{subfigure}[t]{0.3\textwidth}
\centering
\includegraphics[scale=0.28]{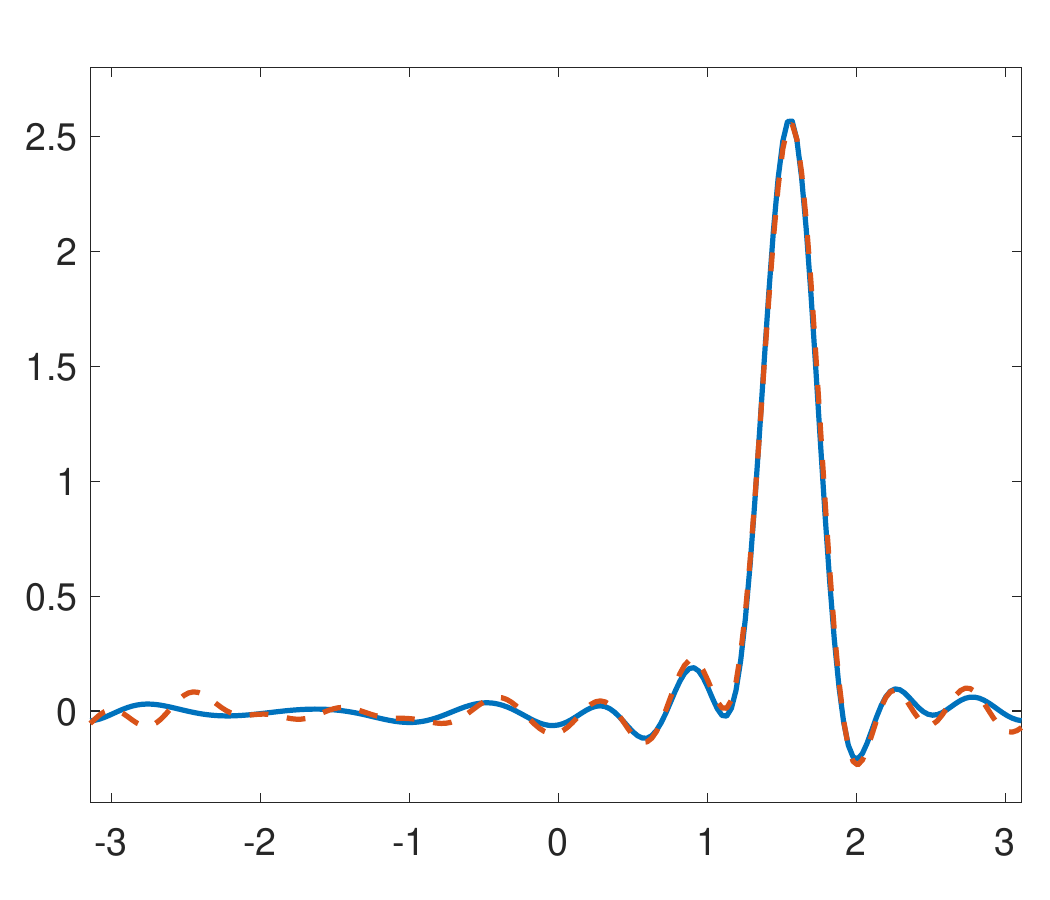}
\caption{Reconstruction  using 10\% noise}
\end{subfigure}
	\caption{\textbf{Choice of the truncation level.} Inversion via TSVD, specified in \autoref{eq:tsvd_discr}, using data having different noise levels and the Gaussian integral kernel from \autoref{eq:gaussian_a} with $A=80$. Top: Illustration of the Picard condition indicating that $N=12$ is a suitable regularization parameter for all noise levels. Bottom: The original target function $g$ is plotted in a solid blue line while the reconstruction $\mathbf{g}_{12}$  is shown as a dashed red line. No oscillations appear in the reconstruction for all noise levels.}
	\label{fig:svd_80}
\end{figure}

\subsection{Evaluation of the backpropagation}
In this section, we illustrate the effectiveness of the reconstruction via backpropagation presented in  \autoref{alg:full}.
We test our method for different noise levels and we use Gaussian incident illumination with both amplitudes $A=10$ and $A=80$. The results for the former are shown in \autoref{fig:fbp_10} while those for the latter are depicted in \autoref{fig:fbp_80}. In both cases, the backpropagated solutions exhibit oscillations due to the low-pass filtered Fourier inversion. This is also known as the \emph{Gibbs phenomenon}, see \cite[section 1.4.3]{PloPotSteTas18}, and consistent with the observations for conventional DT in \cite{FauKirQueSchSet22a,KakSla01}.  We further observe, that the use of noisy measurement data significantly affects the reconstruction quality with focused illumination, see \autoref{fig:fbp_10}. The effect of noisy measurements in the reconstruction is less pronounced when unfocused illumination is used as depicted in \autoref{fig:fbp_80}.
We attribute this behavior to the backpropagation process, which employs the TSVD as a regularization technique. As examined in \autoref{subsec:testsvd}, this approach leads to a noise-robust intermediate solution, particularly in the case of unfocused illumination.
\begin{figure}[t]
	\centering 
	\captionsetup[subfigure]{width=0.9\linewidth}
	\begin{subfigure}[t]{0.33\textwidth}
		\centering
		\includegraphics[scale=0.32]{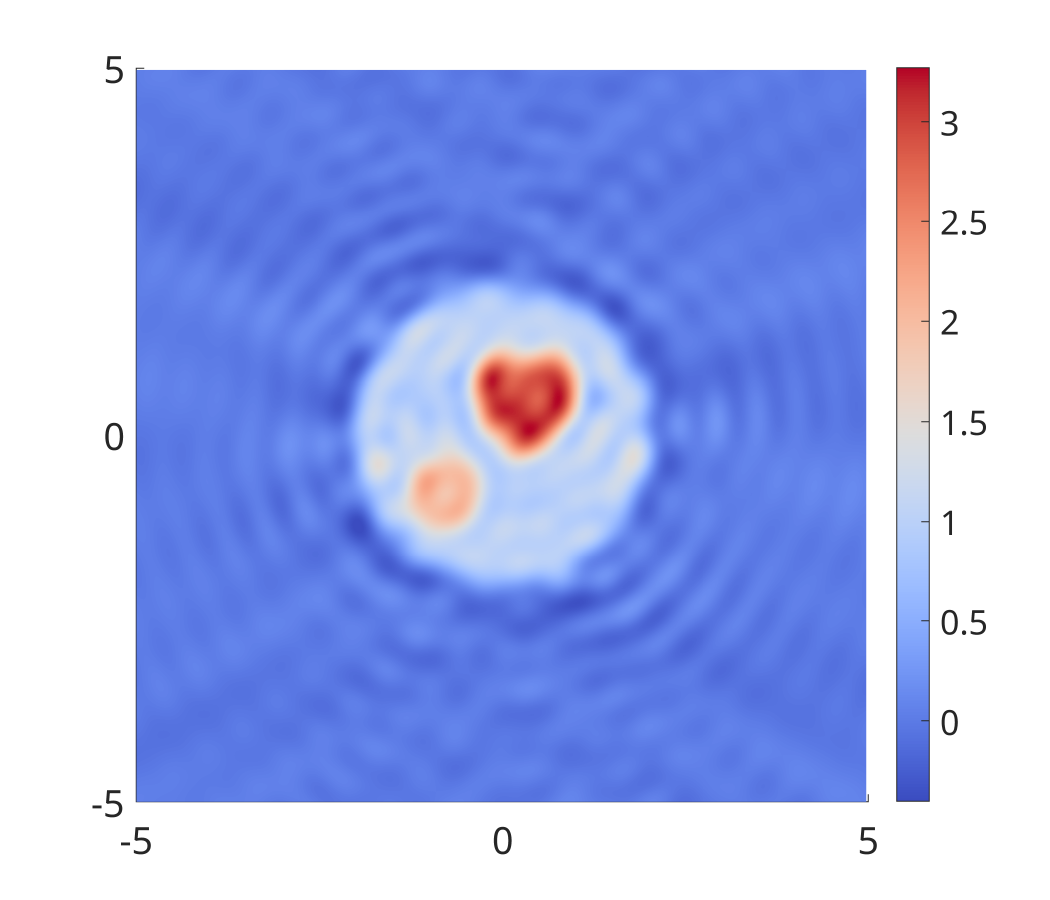}
		\caption{$\mathbf{f}^{\text{bp}}_{12}$ from noiseless data, \\PSNR 27.81, RMSE 0.24, SSIM 0.19}
	\end{subfigure}\hfill 
	\begin{subfigure}[t]{0.33\textwidth}
		\centering
		\includegraphics[scale=0.32]{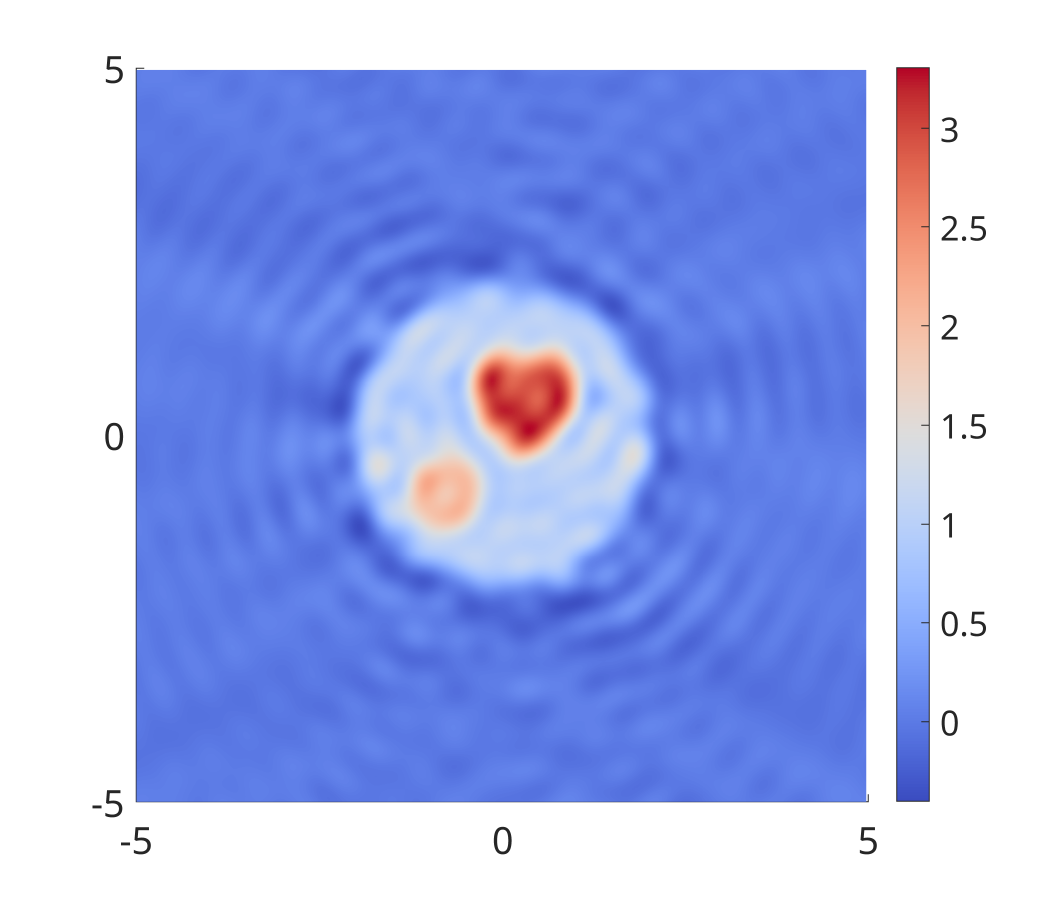}
		\caption{$\mathbf{f}^{\text{bp}}_{12}$ from data with $1\%$ Gaussian noise,\\ PSNR 27.80, RMSE 0.24, SSIM 0.19}
	\end{subfigure}\hfill
	\begin{subfigure}[t]{0.33\textwidth}
	\centering
	\includegraphics[scale=0.32]{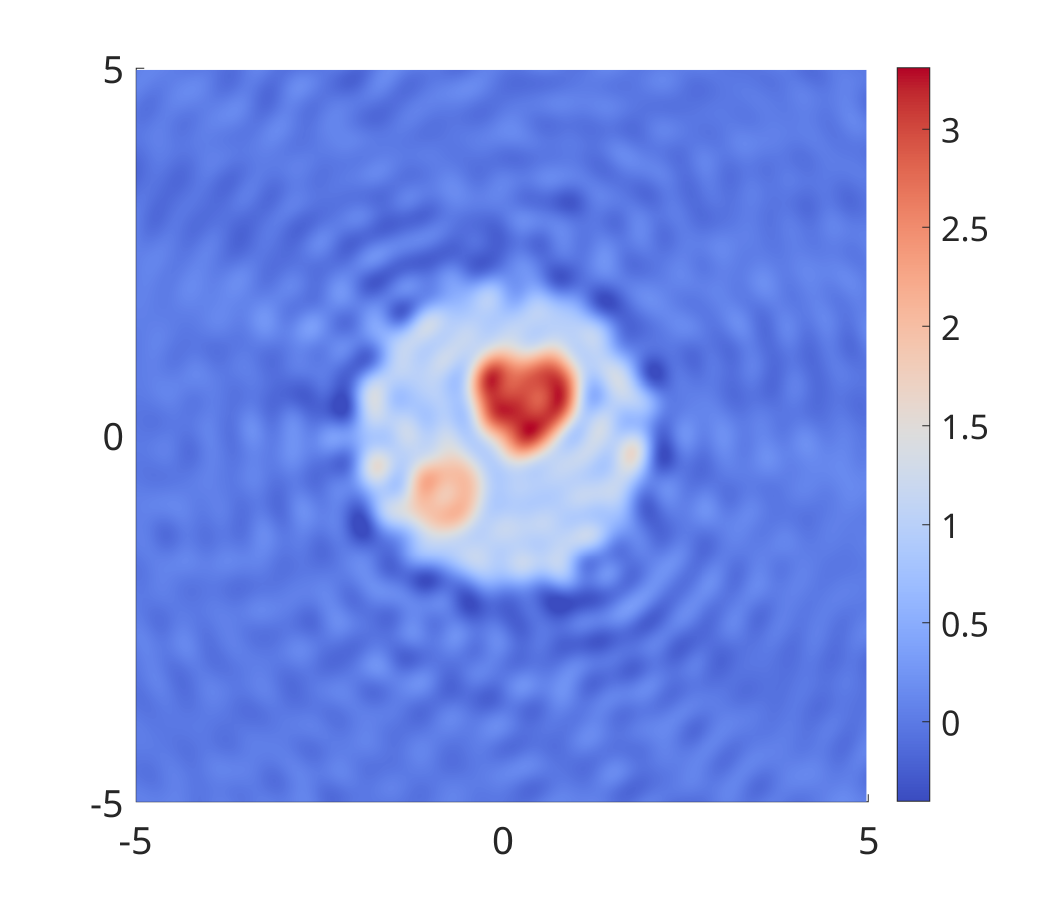}
	\caption{$\mathbf{f}^{\text{bp}}_{12}$ from data with $5\%$ Gaussian noise \\PSNR 26.87, RMSE 0.26, SSIM 0.17}
	\end{subfigure}
	\caption{Reconstruction of the test sample $f$ shown in \autoref{fig:phantom}. For illumination the Gaussian profile from \autoref{eq:gaussian_a} with $A=10$ was used. The reconstruction is carried out using perturbed data with varying noise levels.}
	\label{fig:fbp_10}
\end{figure}
\begin{figure}[t]
	\centering 
	\captionsetup[subfigure]{width=0.9\linewidth}
	\begin{subfigure}[t]{0.33\textwidth}
		\centering
		\includegraphics[scale=0.32]{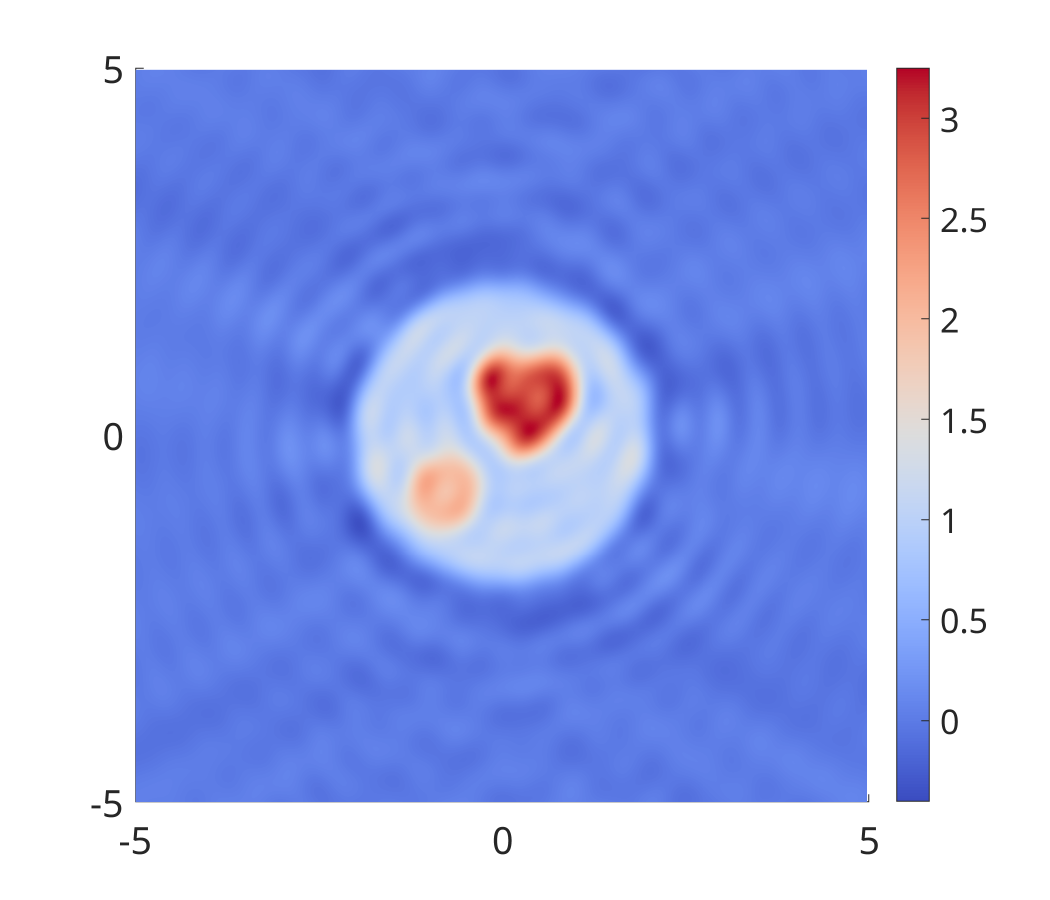}
		\caption{$\mathbf{f}^{\text{bp}}_{12}$ from noiseless data, \\ PSNR 28.24, RMSE 0.22, SSIM 0.22}
	\end{subfigure}\hfill
	\begin{subfigure}[t]{0.33\textwidth}
		\centering
		\includegraphics[scale=0.32]{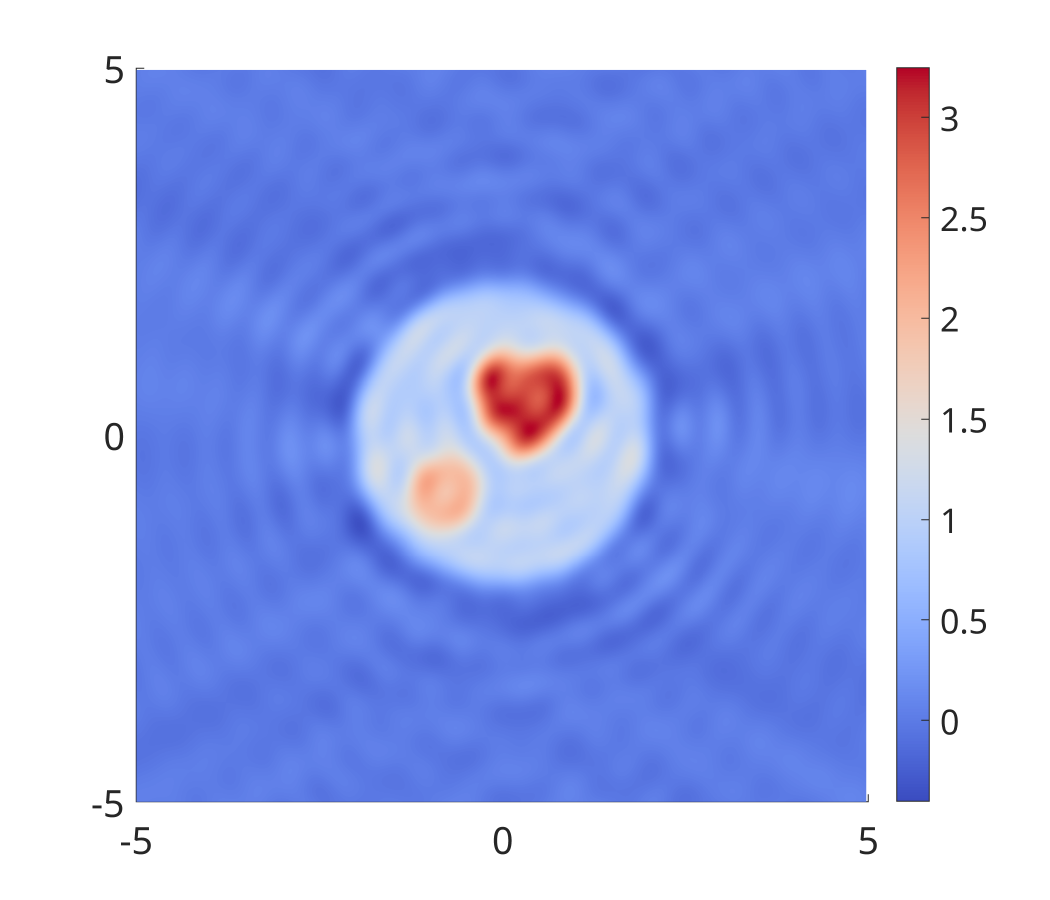}
		\caption{ $\mathbf{f}^{\text{bp}}_{12}$ from data with $1\%$ Gaussian noise,\\ PSNR 28.24, RMSE 0.22, SSIM 0.21}
	\end{subfigure}\hfill
	\begin{subfigure}[t]{0.33\textwidth}
		\centering
		\includegraphics[scale=0.32]{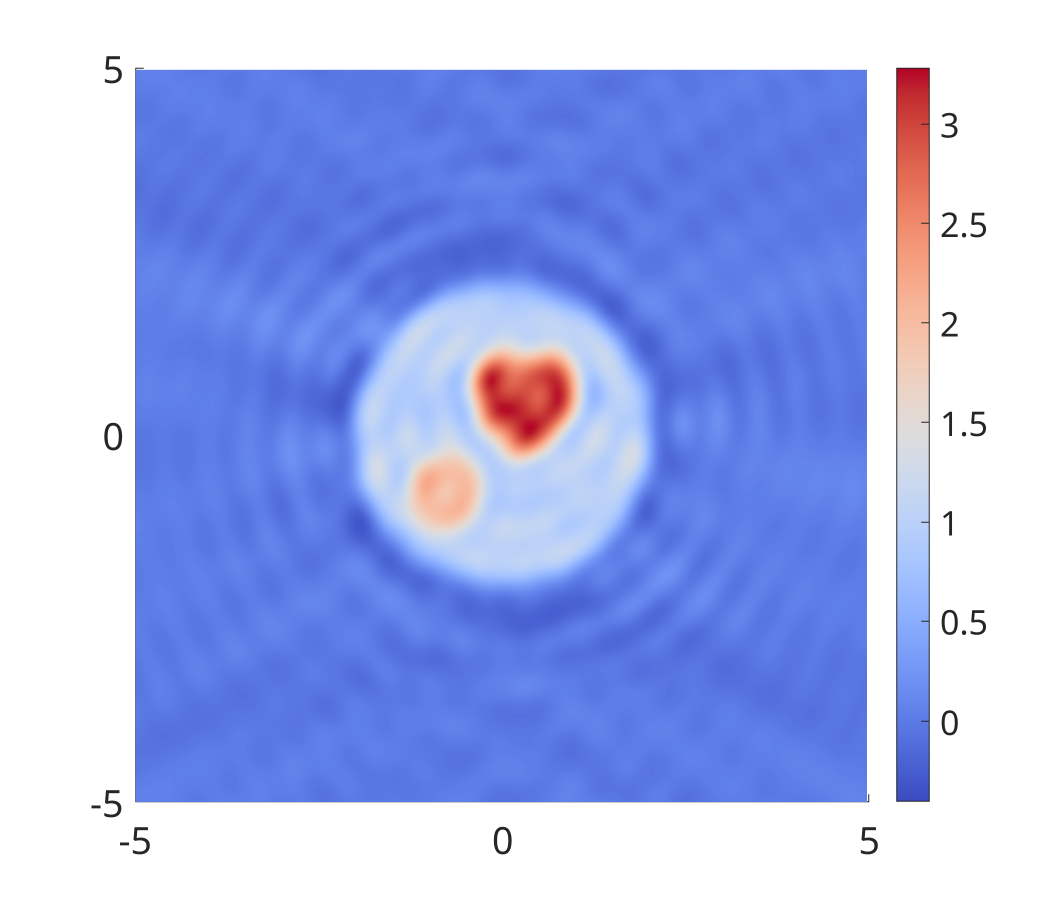}
		\caption{ $\mathbf{f}^{\text{bp}}_{12}$ from data with $5\%$ Gaussian noise,\\ PSNR 28.23, RMSE 0.23, SSIM 0.21}
	\end{subfigure}
	\caption{Reconstruction of the test sample $f$ shown in \autoref{fig:phantom}. For illumination the Gaussian profile from \autoref{eq:gaussian_a} with $A=80$ was used. The reconstruction is carried out using perturbed data with varying noise levels.}
	\label{fig:fbp_80}
\end{figure}
\begin{figure}[h]  
	\centering 
	\captionsetup[subfigure]{width=0.95\linewidth}
	\begin{subfigure}[t]{0.24\textwidth}
		\centering
		\includegraphics[scale=0.24]{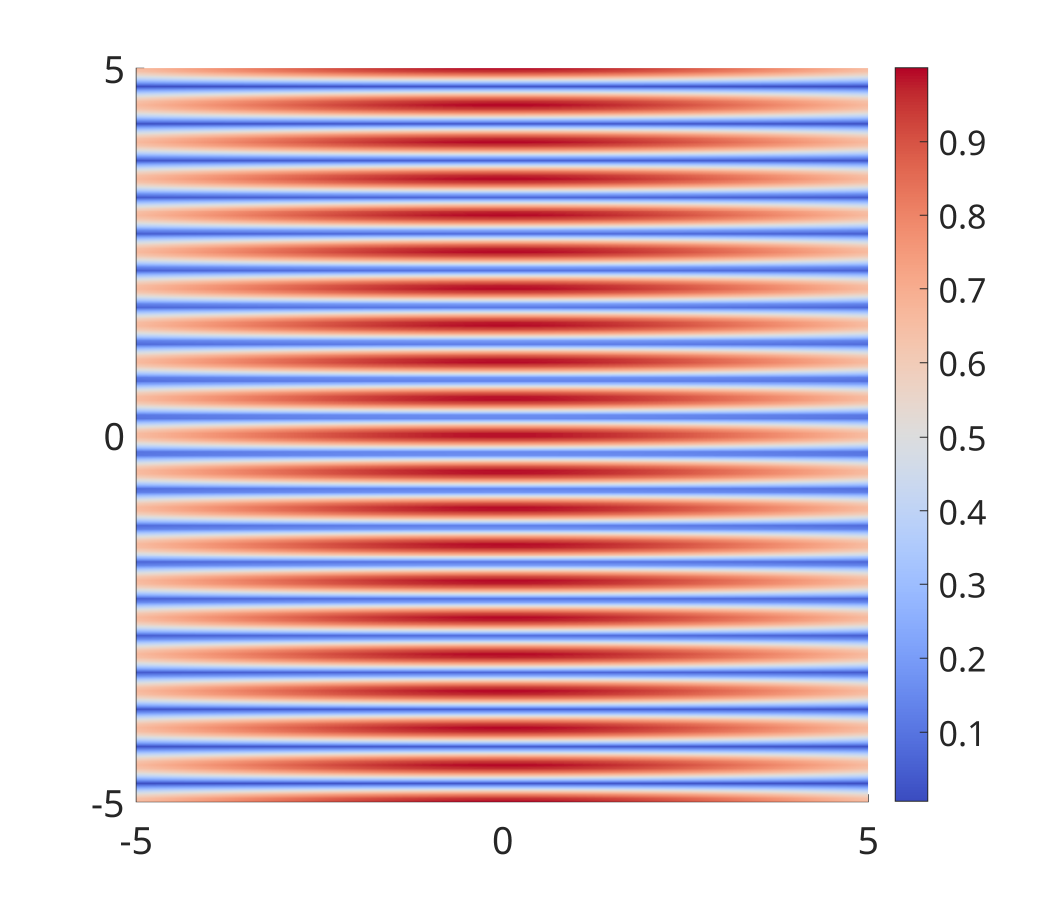}
		\caption{$A=600$}
	\end{subfigure}\hfill
	\begin{subfigure}[t]{0.24\textwidth}
		\centering
		\includegraphics[scale=0.24]{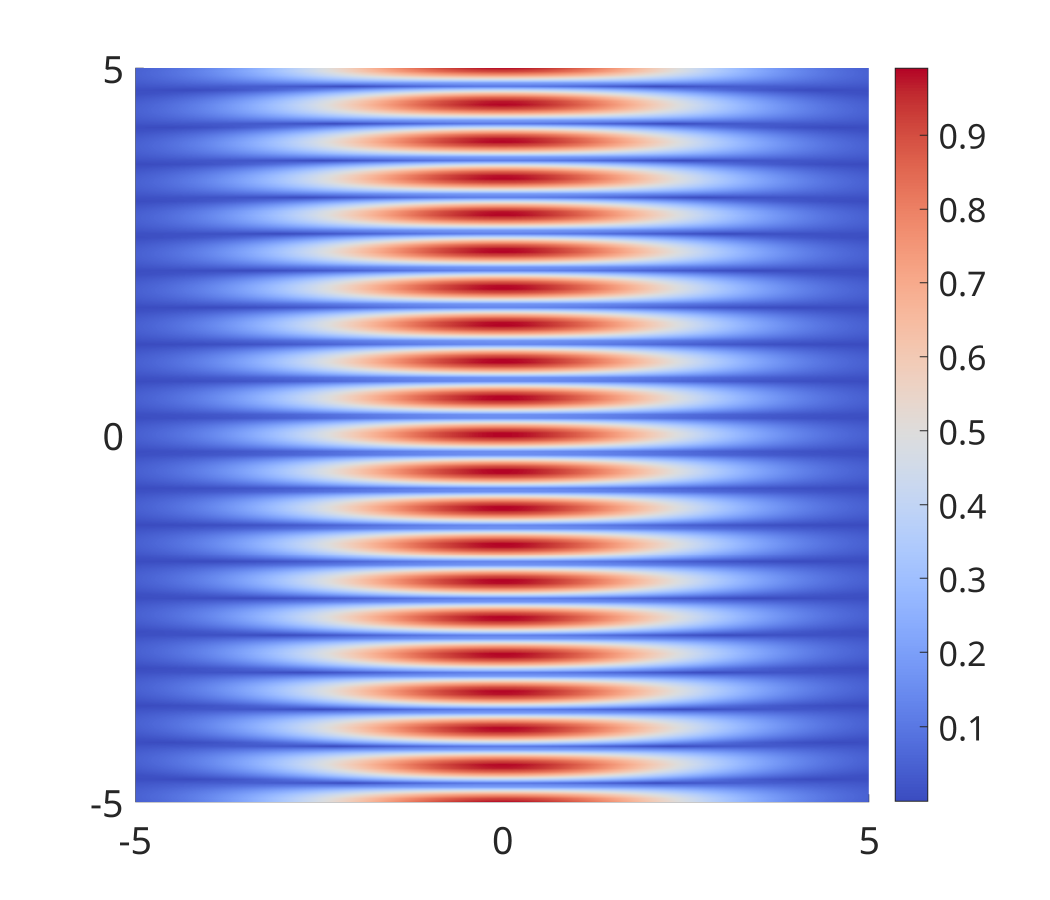}
		\caption{$A=80$}
	\end{subfigure}\hfill
	\begin{subfigure}[t]{0.24\textwidth}
		\centering
		\includegraphics[scale=0.24]{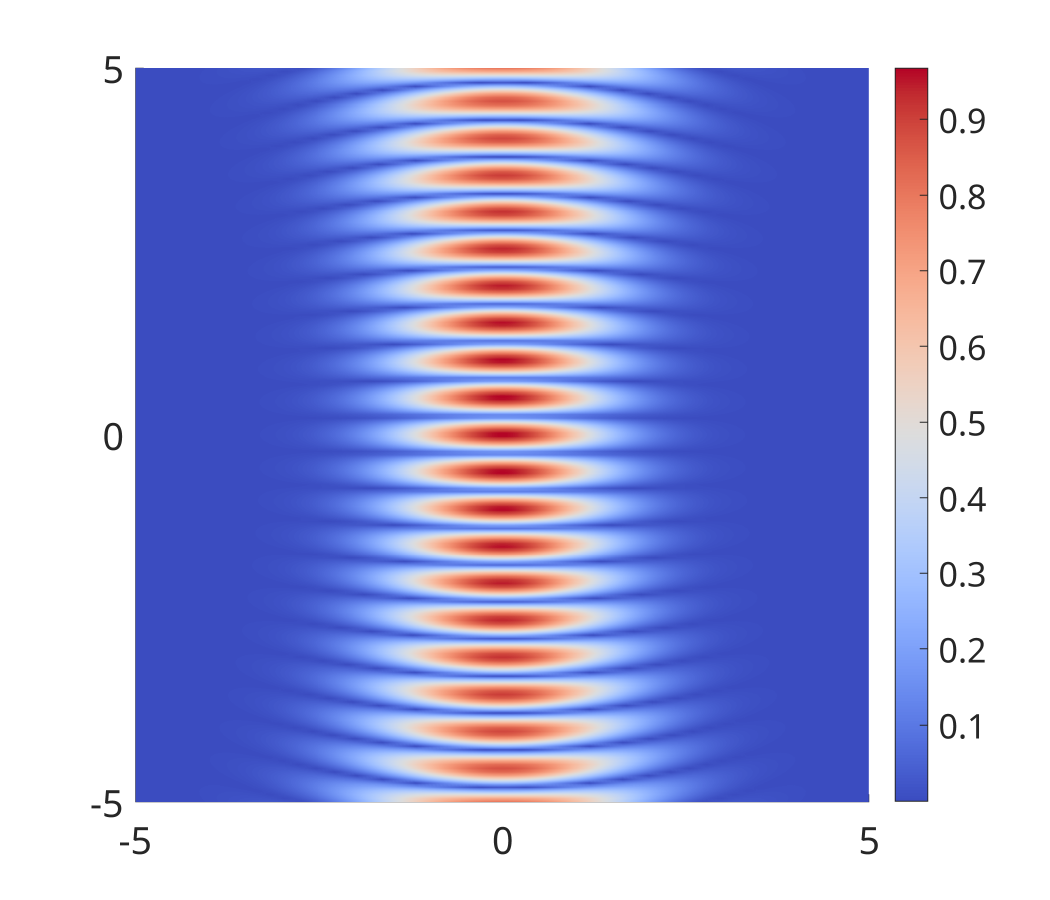}
		\caption{$A=20$}
	\end{subfigure}\hfill
	\begin{subfigure}[t]{0.24\textwidth}
		\centering
		\includegraphics[scale=0.24]{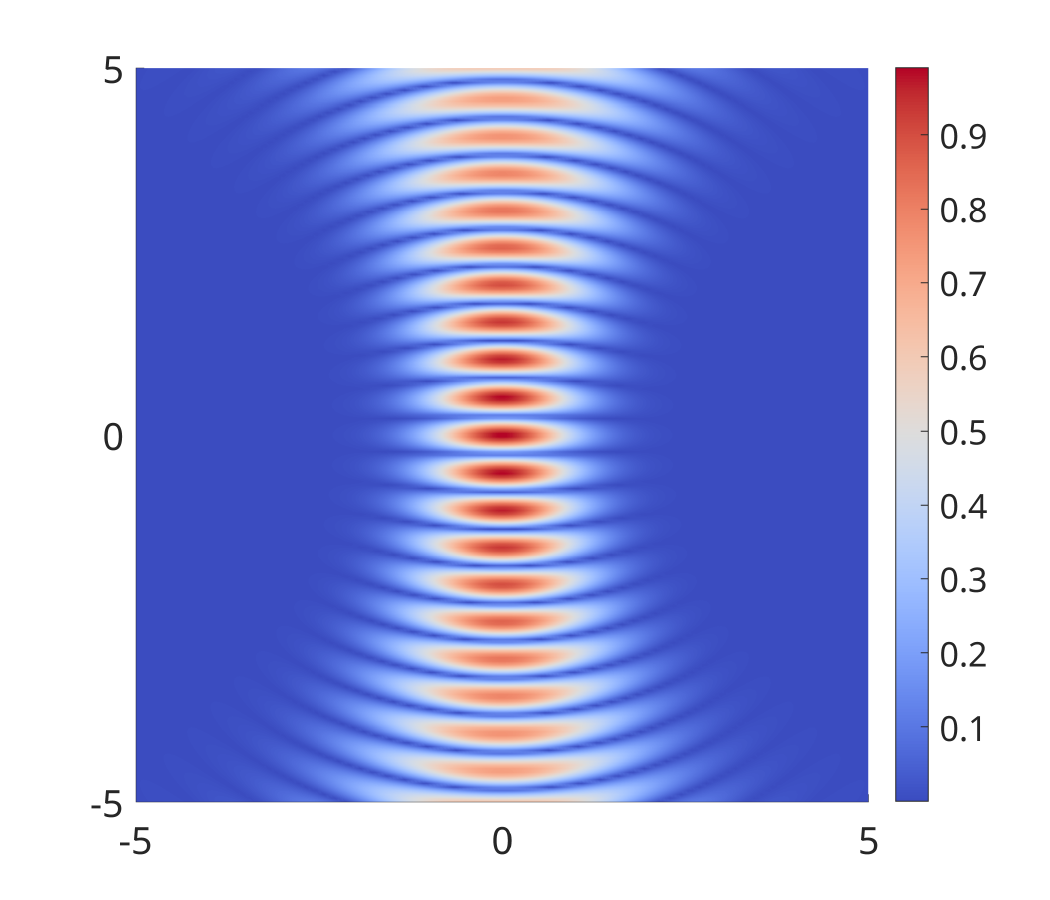}
		\caption{$A=10$}
	\end{subfigure}\\
	\begin{subfigure}[t]{0.24\textwidth}
		\centering
		\includegraphics[scale=0.24]{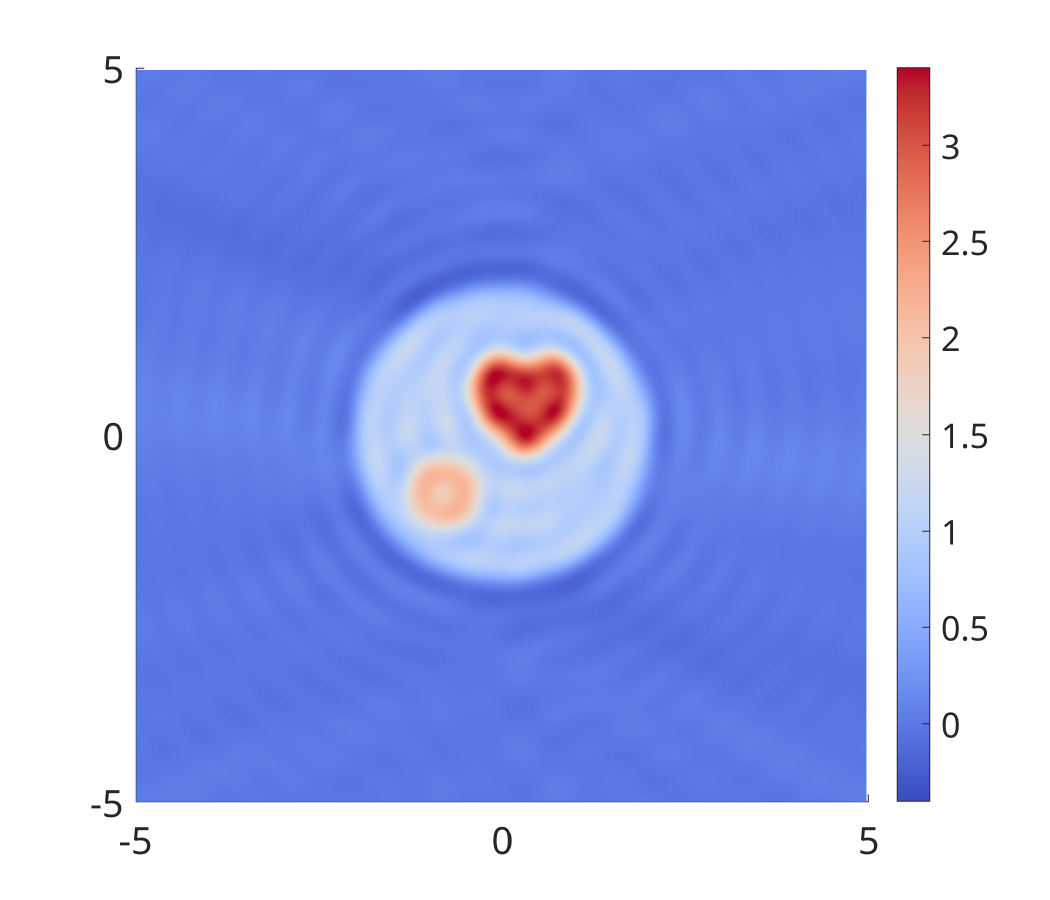}
		\caption{PSNR 29.50, SSIM 0.29, \\RMSE 0.19}
	\end{subfigure}\hfill
	\begin{subfigure}[t]{0.24\textwidth}
		\centering
		\includegraphics[scale=0.24]{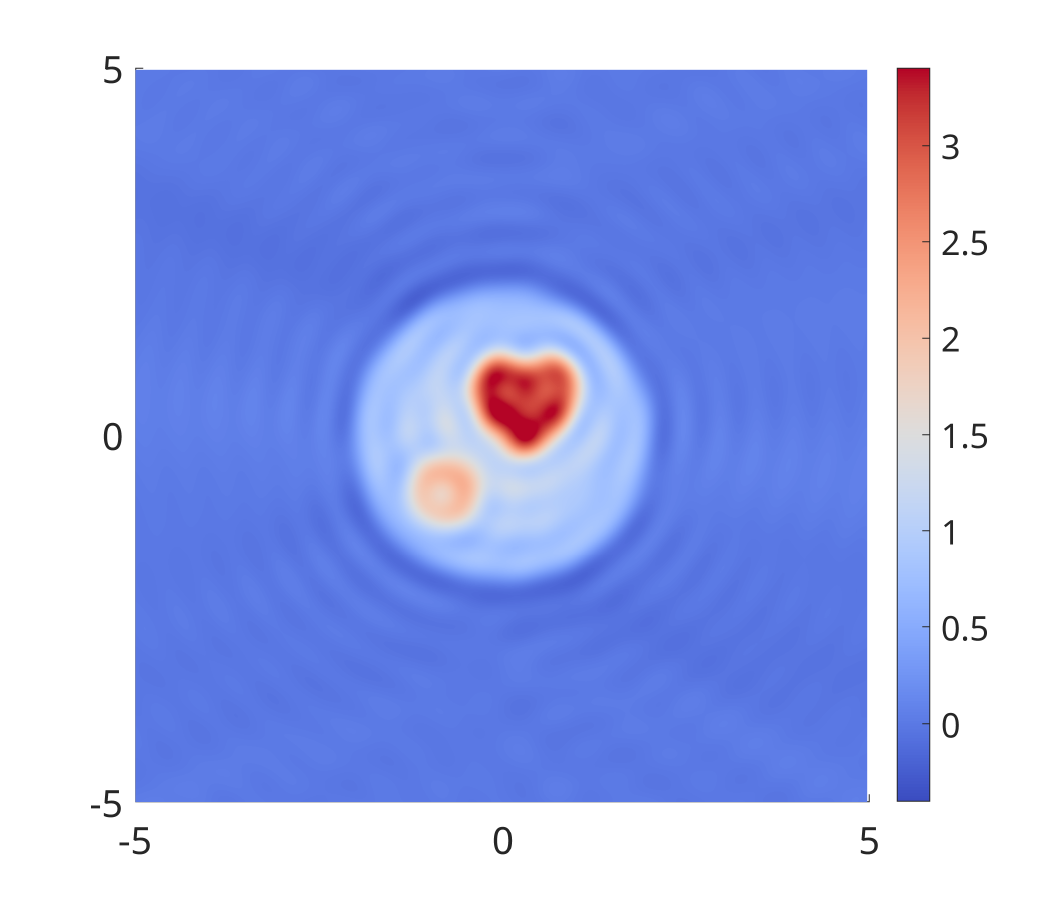}
		\caption{PSNR 28.15, SSIM 0.34, RMSE 0.23}
	\end{subfigure}\hfill
	\begin{subfigure}[t]{0.24\textwidth}
		\centering
		\includegraphics[scale=0.24]{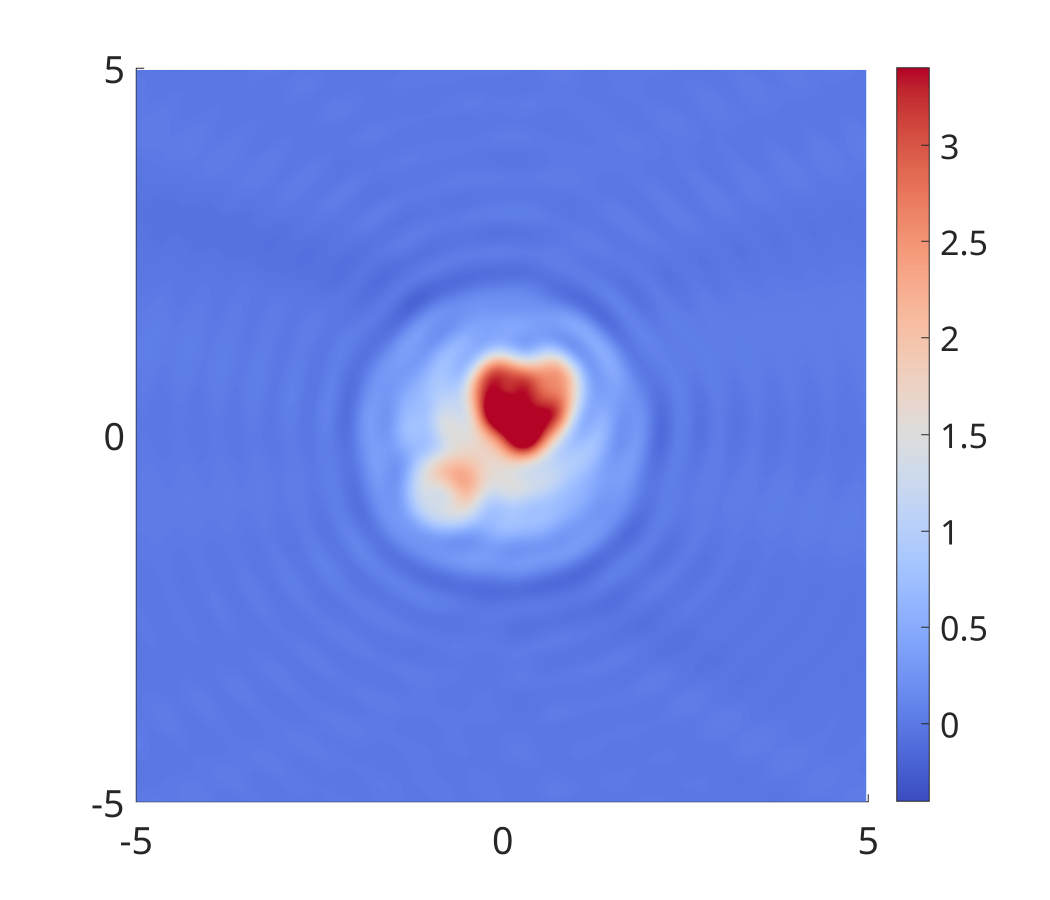}
		\caption{PSNR 22.45, SSIM 0.38, RMSE 0.44}
	\end{subfigure}\hfill
	\begin{subfigure}[t]{0.24\textwidth}
		\centering
		\includegraphics[scale=0.24]{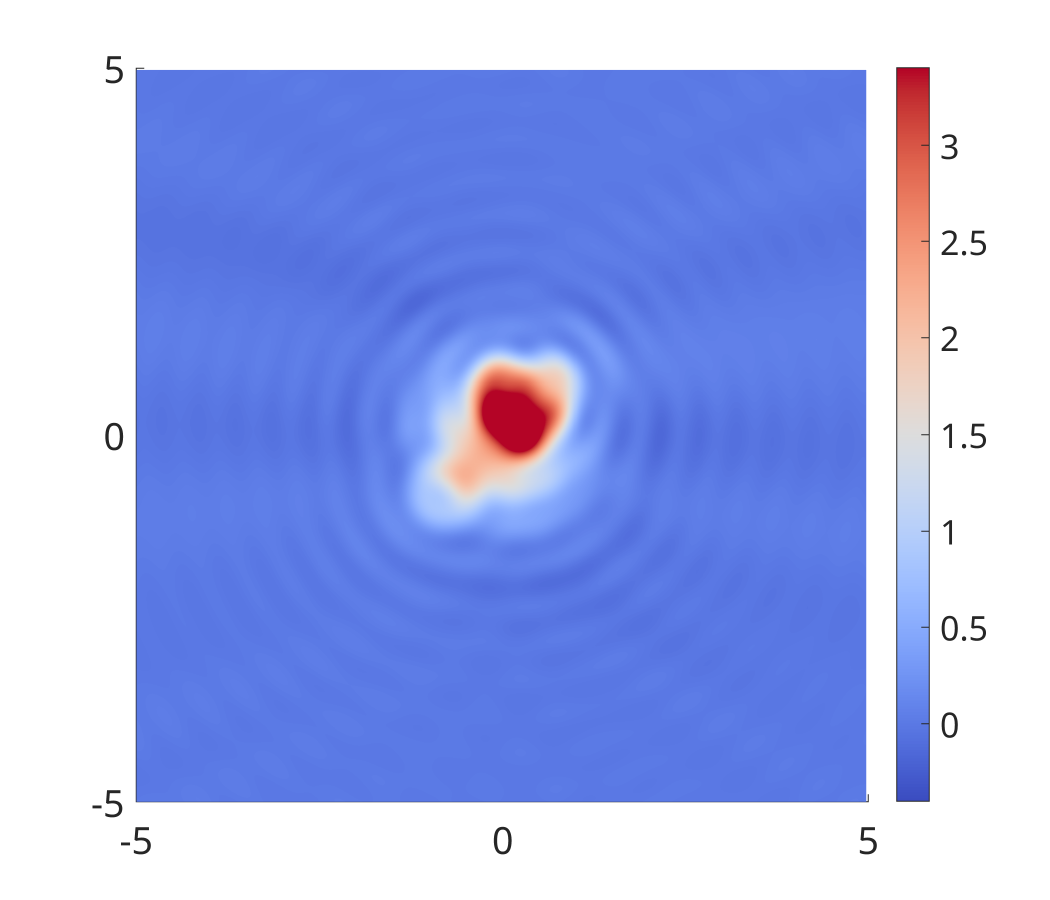}
		\caption{ PSNR 19.60, SSIM 0.34, RMSE 0.61}
	\end{subfigure}
	\caption{\textbf{Reconstruction with conventional DT.} Top: Illumination scenarios using the Gaussian beam profile from \autoref{eq:gaussian_a} for different values of $A$. Bottom: 
		Resulting reconstructions via conventional DT, i.e. without implementing TSVD as an intermediate inversion step.}
	\label{fig:comp}
\end{figure}
\subsection{Reconstruction with conventional DT}
In \autoref{sec:comparison} a comparison of the forward model of conventional DT and focused beam DT was provided. We observed that the measurements differ depending on the chosen imaging scenario and therefore developed a new reconstruction technique for DT that takes customized incident fields as opposite to only plane wave imaging into account.  At last, it is our intention to numerically quantify the impact of this new DT approach adapted to the illumination scenario. In doing so, we illuminate the test sample using a Gaussian beam and reconstruct with conventional DT, i.e. backpropagation without performing TSVD. 

The incident beam waves for different amplitudes $A$ together with the conventional DT solutions are presented in \autoref{fig:comp}. We observe that the reconstruction quality deteriorates as $A$ decreases, i.e. as the plane wave assumption is violated by focusing. In summary, when dealing with an illumination scenario that deviates from the plane wave assumption, such as (focused) beam illumination, it is essential to account for this in the forward model and, consequently, in the reconstruction.
	
\section{Conclusion}\label{sec:conclusion}
In this article, we studied the imaging problem of DT, considering arbitrary illumination scenarios modeled by a weighted superposition of monochromatic plane waves. Motivated by application in ultrasound tomography, we specifically introduced a focused beam as a representative imaging scenario that deviates from plane wave illumination. Based on the Born approximation we derived an adapted forward model to the new illumination concept. Then, we deduced from conventional DT an adapted Fourier diffraction theorem, serving as the basis for quantifying the scattering potential from the detected scattered waves. In contrast to the original theorem \cite{Wol69}, this adaption did not provide a direct relation between the measurements and the Fourier-transformed scattering potential. Therefore, the backpropagation of the scattering potential was divided into two parts: We first inverted the arising integral operator in this Fourier diffraction relation by calculating the TSVD. Subsequently, we extracted the scattering potential using a low-pass filtered Fourier inversion. 

In the practical implementation of this approach, we assessed its performance. It became evident that the choice of the beam profile plays a crucial role in determining the accuracy and stability of the first inversion step and hence of the backpropagation. In particular, our findings revealed that focusing is disadvantageous for the stability of the solution. Nonetheless, we demonstrated that when dealing with an imaging scenario that deviates too much from a plane wave, such as focusing, conventional DT reconstruction is not sufficient and that our approach is to be preferred.

\subsection*{Acknowledgements}
%
%
This research was funded in whole, or in part, by the Austrian Science Fund
(FWF) 10.55776/P34981 -- New Inverse Problems of Super-Resolved Microscopy (NIPSUM)
and SFB 10.55776/F68 ``Tomography Across the Scales'', project F6807-N36
(Tomography with Uncertainties). For open access purposes, the author has
applied a CC BY public copyright license to any author-accepted manuscript
version arising from this submission.
The financial support by the Austrian Federal Ministry for Digital and Economic
Affairs, the National Foundation for Research, Technology and Development and the Christian Doppler
Research Association is gratefully acknowledged.
The authors are grateful to Leopold Veselka for feedback and suggesting several improvements on the manuscript.

	\FloatBarrier
	\printbibliography

\end{document}